\renewcommand\subsubsection{\@secnumfont}{\bfseries}%
\renewcommand\subsubsection{\@startsection{subsubsection}{3}
  \z@{.5\linespacing\@plus.7\linespacing}{-.5em}%
  {\normalfont\bfseries}}
\newcommand{\mel}{\MoveEqLeft}
\newtheorem{theorem}{Theorem}[section]
\newtheorem{proposition*}{Proposition\textsuperscript{*}}
\newtheorem{corollary}[theorem]{Corollary}
\newtheorem{corollary*}{Corollary\textsuperscript{*}}
\newtheorem{proposition}[theorem]{Proposition}
\newtheorem{lemma}[theorem]{Lemma}
\newtheorem{conjecture}[theorem]{Conjecture}
\theoremstyle{definition}
\newtheorem{remark}[theorem]{Remark} 
\newtheorem{example*}{Example\textsuperscript{*}}
\numberwithin{equation}{section}
\newenvironment{manualtheorem}[1]{%
  \IfBlankTF{#1}
    {}
    {}%
  \manualtheoreminner
}{\endmanualtheoreminner}
\def\Limes#1#2 {\lim\limits_{#1\rightarrow #2}}
\DeclareMathOperator{\sgn}{sgn}
\def\eps{\epsilon}
\def\R{\mathbb{R}}
\def\tst{T}
\newcommand{\Chi}{\mathcal{X}}
\def\XXint#1#2#3{{\setbox0=\hbox{$#1{#2#3}{\int}$ }
\vcenter{\hbox{$#2#3$ }}\kern-.59\wd0}}
\renewcommand{\epsilon}{\varepsilon}
\def\scalar#1#2{\langle #1,#2 \rangle}
\def\de{\partial}
\def\dx{\,\mathrm{d}x}
\def\dz{\,\mathrm{d}z}
\def\dy{\,\mathrm{d}y}
\newcommand{\red}[1]{{\textcolor{red}{#1}}}
\newcommand{\E}{\mathcal{E}}
\newcommand{\david}[1]{\textcolor{red}{#1}}
\newcommand\minus\backslash
\newcommand\lan\langle
\newcommand\ran\rangle
\newcommand{\supp}{\operatorname{Supp}}
\newcommand{\dd}{{\,\mathrm d}}
\DeclareMathOperator\dist{dist} 
\DeclareMathOperator\diam{diam}
\newcommand{\norm}[1]{\left\lVert#1\right\rVert}
\renewcommand\leq\leqslant
\renewcommand\geq\geqslant
\title[Confinement of multiple vortices]{Long time confinement of multiple concentrated vortices}
 \author[D. Meyer]{David Meyer}
 \address{ \vspace{-0.4cm}
\newline 
\textbf{{\small David Meyer}} 
\vspace{0.15cm}
\newline \indent Instituto de Ciencias Matem\'aticas, Consejo Superior de Investigaciones Cient\'\i ficas, 28049 Madrid, \indent Spain}
 \email{david.meyer@icmat.es}
\keywords{}
\subjclass[2020]{}
\begin{document}

\begin{abstract}
We study the stability of multiple almost circular concentrated vortices in a fluid evolving according to the two-dimensional Euler equations. We show that, for general configurations, they must remain concentrated on time-scales much longer than previously known as long as they remain separated. We further prove a new stability estimate for the logarithmic interaction energy as part of the proof.
\end{abstract}

\maketitle

\section{Introduction}
We are concerned with the behaviour of the two-dimensional Euler equations in vorticity form, which read as \begin{align}
&\de_t\omega+(u\cdot\nabla)\omega=0\quad \text{ in $\Omega$}\label{Euler1}\\
&u=\nabla^\perp (-\Delta)^{-1}\omega\quad \mkern12mu\text{ in $\Omega$}\label{Euler2}\,,
\end{align}
here $\Omega$ is either $\R^2$ or a bounded simply connected subset of $\R^2$ with a sufficiently regular boundary, and in case $\Omega$ is bounded, the inverse Laplacian is equipped with Dirichlet boundary conditions.
By a classical theorem of Yudovich \cite{Yudovich}, these equations are globally well-posed if the initial datum $\omega^0$ is in $L^\infty(\Omega)\cap L^1(\Omega)$.

A classical topic is the study of the stability of specific solutions to \eqref{Euler1}-\eqref{Euler2}.
One such class of solutions, which is observed to be quite stable, are circular vortices, i.e.\ solutions where $\omega$ is (in a loose sense) concentrated on some more or less circular subdomains.
A typical setup for this is to consider initial data $\omega^0$ such that \begin{align}
&\omega^0=\sum_{i=1}^n \omega_i^0\label{A1}\tag{A1}\\
&\supp \omega_i^0\subset B_{\eps N_1}(Y_i^0)\quad \text{for each $i=1,\dots, n$}\label{A2}\tag{A2}\\
&\int_\Omega \omega_i^0\dx=a_i\qquad\qquad\, \text{for each $i=1,\dots, n$},\label{A3}\tag{A3}
\end{align}
where $\eps<<1$, the number $N_1$ is fixed and the points $Y_i^0$ have pairwise distances $>>\eps$.

If $G(x,y)$ denotes the Green's function of the Dirichlet-Laplacian, then one expects that in the limit $\eps\searrow 0$, we have $\omega_i\overset{\ast}{\rightharpoonup} a_i \delta_{Y_i}$ and the velocity in the limit should (at least formally) be \begin{align*}
u^p(x):=\sum_{i=1}^n -a_i\nabla^\perp G(x,Y_i).
\end{align*}
This velocity, however, diverges at $x=Y_i$, more precisely if $\gamma$ is the reflection term from the boundary in the Green's function, i.e.\ \begin{align*}
&\Delta_y \gamma(x,y)=0\quad \text{ for $y\in\Omega$}\\
&\gamma(x,y)=\frac{1}{2\pi}\log|x-y|\quad \text{ for $y\in\de \Omega$}
\end{align*}
(interpreted as $0$ on the full space), then it holds that \begin{align}
G(x,y)=-\frac{1}{2\pi}\log|x-y|+\gamma(x,y)\label{ref green}
\end{align}
and \begin{align*}
u^p(x)=\sum_i\frac{(x-Y_i)^\perp}{2\pi|x-Y_i|^2}-\nabla^\perp \gamma(x,Y_i).
\end{align*}
Here, only the first summand is singular at $Y_i$ (presuming the $Y_i$ are not at the boundary), furthermore, its rotational symmetry indicates that this part of the velocity does not affect $Y_i$ itself and therefore the $Y_i$ should evolve according to the equations \begin{align}
Y_i'=u_i^p(Y_i):=-a_i\nabla^\perp \gamma(Y_i,Y_i)-\sum_{j\neq i}-a_j\nabla^\perp G(Y_i,Y_j).\label{pvs}
\end{align}
This is the so-called point vortex system, which was first introduced by Helmholtz in the 19th century \cite{Helmholtz}. Unlike the actual 2D Euler equations, solutions to this ODE system can form singularities in the form of collisions of the vortices with each other or the boundary in finite time (see e.g.\ \cite[Thm.\ 2.2.3]{Newton_2010} for an example).

In spite of the simple nature of this model, it is in general not trivial at all to show that the vortices actually remain concentrated enough to justify such an approximation.

The first mathematically rigorous justification of this system was by Marchioro and Pulvirenti in 1983 \cite{marchioro1983euler}, with many further improvements e.g.\ in \cite{turkington1987evolution,marchioro1993vortices, caprini2015concentrated,butta2018long, ceci2021vortex,guo2025stability},
where it is shown that the vortices remain confined for a timescale of $O(|\log\eps|)$ and converge to the solution of the point vortex system presuming they do not collide with each other. An alternative approach based on gluing techniques, giving more information on the vortices at the price of stronger initial assumptions, was established in \cite{davila2020gluing}.

For a single vortex in the full space, much better results are available, the best result being confinement to a small ball on a timescale of $O(\eps^{-2}|\log\eps|^{-1})$ from Gamblin, Iftimie and Sideris in \cite{iftimie1999evolution}, furthermore, global in time nonlinear stability results are available for perturbations of circular vortices, see e.g.\ \cite{marchioro1985some,wan1985nonlinear,sideris2009stability,choi2022stability}, though typically in a rather weak sense (e.g.\ in the $L^1$-norm).
For some special configurations of multiple vortices (e.g.\ expanding configurations or near stationary points), results on higher time-scales (or even global) are also known, see e.g.\ \cite{butta2018long,donati2021long,davila2023global,choi2024stability}.
There are also many works constructing stationary, rigidly rotating, quasiperiodic or periodic solutions close to such solutions of the point vortex system, see e.g.\  \cite{burbea1982motions,turkington1985corotating,smets2010desingularization,berti2023time,hassainia2023rigorous,hassainia2024desingularization}

For solutions of Navier-Stokes with sufficiently small viscosity, a modified version of the point vortex approximation has been justified e.g.\ in \cite{gallay2011interaction,marchioro1990vanishing,ceci2024dynamics,dolce2024long}.
Similar questions for SQG, the axisymmetric or helical 3D Euler equations, the lake equations or vortex sheets were studied e.g.\ in \cite{geldhauser2020point,benedetto2000motion,davila2024leapfrogging,hientzsch2024dynamics,glass2018point,donati2024dynamics,enciso2025desingularization}. Let us also mention that from the viewpoint of numerics, understanding how \eqref{pvs} converges to the Euler equation in the limit $n\rightarrow \infty$ can also be interesting, see e.g.\  \cite{goodman1990convergence,rosenzweig2022mean}.

Some open question about the behaviour of vortices are for instance (nonlinear) inviscid damping (see e.g.\ the partial results in \cite{bedrossian2019vortex,ionescu2022axi}), how such vortices behave under collisions, the behaviour of vortex filaments in the 3D Euler equations \cite{jerrard2017vortex} or the justification of higher order models (see e.g.\ \cite[Chapter 6]{Newton_2010}). Negative results illustrating the limitations of these convergence or confinement statements also do not exist to the best of the author's knowledge.

In particular, it is also conjectured (\cite[p. 3]{butta2018long}) that these convergence results actually hold on much longer timescales than $O(|\log\eps|)$ due to difficult-to-capture cancellation effects (which will be explained at the beginning of Section \ref{vortprsec} below).

The purpose of this work is to provide a proof of this conjecture under somewhat stronger assumptions on the initial data, giving confinement on a time-scale of up to $O(\eps^{-1}|\log\eps|^{-\frac{1}{2}})$ and under a slightly stronger form of the no-collision assumption.

The additional assumption we will make is that each initial vortex's magnitude is very
close to being radially decreasing. %which is generally supported by experiments and numerics \cite{} and might even be considered part of the definition of a vortex. 
The reason why this assumption is useful is that vortices which are radially symmetric with a radially nonincreasing vorticity are automatically nonlinear stable due to two variational principles: Namely among all measure-preserving rearrangements of the vorticity, they maximize the kinetic energy and minimize the angular momentum, which due to the transport structure of the vorticity equation \eqref{Euler1}  means that the kinetic energy and the momentum automatically both act as a Lyapunov functional for perturbations since they are conserved under the evolution.

Of course, when we are dealing with multiple vortices, the energy of a single vortex is not preserved anymore. We will however see that the change of the energy essentially enjoys a fourth-order estimate and therefore a quantitative version of this variational principle, which we will discuss in detail in Section \ref{sec11} below, will allow for strong control over the vortices.

Before stating our results, we need to introduce some notation. We set \begin{equation}
\E(f):=\frac{-1}{2\pi} \int_{\R^{2+ 2}}\log|x-y|f(x)f(y)\dx\dy,\label{def E}
\end{equation}
which after a formal partial integration equals the kinetic energy $\int_{\R^2}u^2\dx$ for $\omega=f$.
If $\Omega=\R^2$, it can happen that this partial integration fails due to insufficient decay at infinity. Nevertheless, $\mathcal{E}(\omega)$ is still easily seen to be a conserved quantity for $\omega^0\in L^\infty(\R^2)$ with compact support.
Furthermore, for a measurable $f\geq 0$ with suplevel sets of finite measure, the symmetric decreasing rearrangement of $f$ is defined as the unique function $f^*$ with \begin{align}
&|\{f\geq a\}|=|\{f^*\geq a\}| \quad \forall a>0\label{sym rea}\\
&\text{ Every set $\{f^*\geq a\}$ is a ball centered at $0$ for $a>0$.}\label{sym rea2}
\end{align}
See, for instance, \cite[Chapter 3]{LiebLoss} for background reading. In particular, by a classical result of Riesz \cite[Thm.\ 3.7]{LiebLoss}, $f^*$ maximizes $\E$ among all rearrangements of $f$ if $f\geq 0$.

With these definitions at hand, our assumptions on the initial data are the following (in addition to \eqref{A1}-\eqref{A3}):
\begin{align}
&\norm{\omega_i^0}_{L^\infty}\leq N_2\eps^{-2}\label{A4}\tag{A4}\\
&\text{Each $a_i$ is $\neq 0$ and $\omega_i^0\geq 0$ if $a_i> 0$ resp.\ $\omega_i^0\leq 0$ if $a_i<0$}\label{A5}\tag{A5}\\
&\mathcal{E}(|\omega_i^0|^*)-\mathcal{E}(\omega_i^0)\leq N_3 \eps^\beta \text{ for some fixed $\beta> \frac{2}{3}$},\label{A6}\tag{A6}
\end{align}
where $N_2$ and $N_3$ are fixed but arbitrary constants independent of $\eps$ and $\omega_i^0$ is extended to the full space by $0$ in case $\Omega$ is a bounded domain.

Here, the condition \eqref{A6}, which is not used in the aforementioned previous works, measures how close each $\omega_i^0$ is to being radially symmetric. Using that $\log|\cdot|$ is the fundamental solution of the Laplacian, it is not difficult to show that the energy difference in \eqref{A6} is controlled by the $H^{-1}$-difference and that \eqref{A6} is implied by the following condition: \begin{align}
\norm{(\omega_i^0)^*-\omega_i}_{H^{-1}}\leq N_3'\eps^\beta.\tag{A6'}
\end{align}
%(with $\omega_i^0$ extended to the full space by $0$).
%
We next define the evolved vortices. For this, we consider the flow map $\Chi:\Omega\times [0,\infty)\rightarrow \Omega$ defined via \begin{align}
\Chi(0,x)=x\quad\text{and}\quad\frac{\mathrm{d}}{\mathrm{d}t}\Chi(t,x)=u(t,\Chi(t,x)).
\end{align}
It is classical that this is well-defined as $u$ is log-Lipschitz, that $\Chi(t,\cdot)$ is a homeomorphism for each fixed $t$ and that $\omega^t=\omega^0\circ\Chi^{-1}(t,\cdot)$, see e.g.\ \cite[Chapter 8]{Majda2001VorticityAI} for background reading.

We then set \begin{align}
\omega_i^t=\omega_i^0\cdot\Chi^{-1}(t,\cdot).
\end{align}
Furthermore we need to define how to assign a point $Y_i$ to these vortices: We take $X_i$ to be the center of mass of $\omega_i$, i.e.\ \begin{align*}
X_i(t):=\frac{1}{a_i}\int_\Omega \omega_i^t x\dx,
\end{align*}
and write $X_i^0$ for the centers of mass of the initial data, and of course \eqref{A1}-\eqref{A3} should hold with the $X_i^0$ in place of the $Y_i$.

Finally, we can only expect a result under the assumption that the vortices do not collide with each other or the boundary. Similarly as in \cite{butta2018long}, we will make the slightly stronger assumption that we do not just forbid collisions but also the scenario that the minimal distances of the vortices go to $0$ as $t\rightarrow +\infty$, i.e.\ we assume:

 \begin{align}
\text{There is some fixed $b>0$ such that} \min_{i,\,j}\Big(\dist(X_i(t),X_j(t)),\dist(X_i(t),\de\Omega)\Big)\geq b.\label{A7}\tag{A7}
\end{align}
We remark that one can replace $b$ on the right-hand side in this assumption with $b\eps^{\alpha}$ for a small fixed $\alpha>0$ at the price of obtaining worse exponents (depending on $\alpha)$ in the theorem below. Let us also stress that, unlike the other assumptions, this is an assumption depending on $t$.

Our main result is then the following. 
\begin{theorem} \label{T1}
Suppose that the assumptions \eqref{A1}-\eqref{A7} hold for the initial data. Then there exists a $C_0=C(\Omega,n,b,N_1,N_2,N_3, \beta,a_1,\dots, a_n)$, not depending on $\eps$, such that there is a time $T$ with \begin{align}
T\geq \begin{cases}
C_0 \eps^{-1}|\log\eps|^{-\frac{1}{2}} \quad&\text{for $\beta>2$}\\
C_0 \eps^{-1}|\log\eps|^{-\frac{2}{3}}\quad&\text{for $\beta=2$}\\
C_0 \eps^{-\frac{\beta}{2}}|\log\eps|^{-\frac{1}{2}} \quad&\text{for $\beta\in(\frac{4}{5},2)$}\\
C_0\eps^{-(3\beta-2)}\quad&\text{for $\beta\in(\frac{2}{3},\frac{4}{5})$},\\
\end{cases}\label{thmbd1}
\end{align}
such that for each $t\in [0,T]$ the assumption \eqref{A7} is violated before the time $t$, \textbf{or} the vortices remain confined in the sense that \begin{align}
\max_{i}\diam\supp\omega_i^t\lesssim \eps^{\min(1,\frac{\beta}{2})}(1+t)^\frac{1}{2}+\eps^{\frac{1}{2}+\frac{\beta}{8}}(1+t)^\frac{1}{4}.\label{thmb2}
\end{align}
Furthermore, regarding the convergence to the point vortex system, we have that, if \eqref{A7} is not violated before the time $t\in [0,T]$, then \begin{align}
\left|\de_t X_i(t)-u_i^p(X_i(t),t)\right|\lesssim \eps^{\min(4,2\beta)}(1+t)+\eps^{2+\frac{\beta}{2}}\quad\text{ for $t\in [0,T]$}.\label{thmb3}
\end{align}
\end{theorem}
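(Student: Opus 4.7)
The strategy is a continuity argument driven by the energy deficit
\[
D_i(t):=\E(\omega_i^{*})-\E(\omega_i(t))\geq 0,
\]
which is the natural quantitative measurement of how far $\omega_i$ is from being radially symmetric. Since transport by the measure-preserving Euler flow preserves the distribution function of $\omega_i$, the symmetric decreasing rearrangement $\omega_i^{*}$ is independent of $t$, hence $\dot D_i=-\dot\E(\omega_i)$, and assumption \eqref{A6} gives $D_i(0)\lesssim \eps^{\beta}$. The plan is to (a) show that $D_i$ grows only at the fourth-order rate anticipated in the introduction, and (b) prove a new quantitative stability inequality converting smallness of $D_i$ into control on the diameter of $\supp\omega_i$.

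Step (a). Using conservation of the total energy together with the splitting
\[
\E(\omega)=\sum_{i}\E(\omega_i)+\sum_{i\neq j}\E(\omega_i,\omega_j)+(\text{boundary reflection terms from }\gamma),
\]
I would rewrite $\dot\E(\omega_i)$ as a sum of rates of change of interaction energies. After integration by parts, the contribution of the self-logarithmic kernel to $\dot\E(\omega_i)$ vanishes by antisymmetry in $x\leftrightarrow y$, so only the action of the smooth part of the velocity (coming from the other vortices and from $\gamma$) survives. Taylor-expanding this ``outer'' velocity around the center $X_i$ --- legitimate since \eqref{A7} separates the vortices --- the zeroth-order term drops by centering, and the linear strain term is a contraction between a traceless symmetric matrix and the second-moment tensor $\int(x-X_i)\otimes(x-X_i)\omega_i(x)\,dx$. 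This contraction vanishes identically for radial $\omega_i$, and for near-radial vortices the cancellation extends up to fourth order. The outcome is a differential inequality $|\dot D_i|\lesssim \eps^{4}+(\text{lower-order couplings to }D_i)$.

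Step (b) --- the main obstacle. The new stability estimate I would prove has the shape
\[
\E(f^{*})-\E(f)\gtrsim \eps^{-q}\bigl\|f-f^{*}\bigr\|_{X}^{p}
\]
on the class of admissible $f$ (nonnegative, bounded by $N_{2}\eps^{-2}$, supported in a ball of radius $\sim\eps$), for a norm $X$ strong enough to control diameter-type quantities. I would establish it via a rearrangement-preserving homotopy: continuously deforming $f^{*}$ into $f$ through measure-preserving rearrangements, one can express the energy loss as a time integral, and an $L^{2}$-orthogonality argument against radial modes isolates a positive lower bound. This converts the deficit estimate $D_i\lesssim \eps^{\beta}+\eps^{4}t$ from Step~(a) into a diameter bound of the form $\diam\supp\omega_i\lesssim \eps+D_i^{1/p}\eps^{-q/p}$.

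Combining (a) and (b) yields a closed differential inequality whose integration directly produces \eqref{thmb2}; the four $\beta$-regimes in \eqref{thmbd1} arise from the competition between the initial deficit $\eps^{\beta}$ and the driven fourth-order growth $\eps^{4}t$, the crossovers occurring precisely when these are of the same order. Finally \eqref{thmb3} follows from the identity $a_iX_i'=\int x\,\de_t\omega_i\,dx$: integration by parts and Taylor expansion of the outer velocity around $X_i$ produce $u_i^{p}(X_i,t)$ plus an error of order $(\diam\supp\omega_i)^{2}$, which is controlled by the confinement bound \eqref{thmb2} already established.
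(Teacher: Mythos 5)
Your overall philosophy (energy deficit as Lyapunov functional, fourth-order growth, continuity argument) matches the paper's, but two of your three load-bearing steps have genuine gaps. The most serious is Step (b): no stability estimate of the form $\E(f^*)-\E(f)\gtrsim \eps^{-q}\|f-f^*\|_X^p$ can "control diameter-type quantities," because an arbitrarily small amount of mass placed arbitrarily far away changes any reasonable norm arbitrarily little while making $\diam\supp f$ arbitrarily large. The paper's own example $\rho=\mathds{1}_{B_1(0)\cup B_\eps(\eps^{-10}e_1)}$ has deficit $\approx\eps^2|\log\eps|$ and huge support; the sharp statement (Theorem \ref{T2}) is that the deficit controls only the \emph{far mass with a logarithmic weight}, as in \eqref{T24}, plus a $W_2$ bound on the near part. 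Consequently your claimed conversion "$\diam\supp\omega_i\lesssim \eps+D_i^{1/p}\eps^{-q/p}$" is false, and the confinement bound \eqref{thmb2} cannot be closed this way. The paper instead feeds the deficit into a Gamblin--Iftimie--Sideris-type hierarchy of moments $M_k$ (up to $k\sim|\log\eps|$) and a spread function $S$, built on an adapted distance $d_i$ whose level sets are streamlines of the relative point-vortex field (the orthogonality \eqref{d ort} is what implements the averaging), and the exponents $(1+t)^{1/2}$, $(1+t)^{1/4}$ in \eqref{thmb2} come out of solving the coupled system \eqref{d est d}--\eqref{d est s}; none of this machinery has a counterpart in your proposal.

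The second gap concerns the error rates in Steps (a) and in your derivation of \eqref{thmb3}. Your claim that the strain-term cancellation "extends up to fourth order" for near-radial vortices is exactly the point that needs proof: the quadratic term does not cancel outright, it is bounded by the distance of $\omega_i^c$ to its rearrangement, and the paper quantifies this via the mean value property of harmonic functions plus the $W_2$ estimate of Theorem \ref{T2}, yielding an error $\eps^2\mathcal{D}^{1/2}+M_2$ (Lemma \ref{lem mean value}); after Gronwall this gives $\mathcal{D}\lesssim\eps^4(1+t)^2+\eps^\beta$, not the $\eps^\beta+\eps^4 t$ you state. Similarly, for \eqref{thmb3} your Taylor-remainder argument only gives an error of order $\int|x-X_i|^2|\omega_i|\sim(\diam\supp\omega_i)^2\sim\eps^2(1+t)$, which is far weaker than the asserted $\eps^{\min(4,2\beta)}(1+t)+\eps^{2+\beta/2}$; again the improvement requires harmonicity of the outer field together with the Wasserstein stability for the near part and the moment $M_2$ for the far part. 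So while your outline identifies the right conserved/almost-conserved quantities, it is missing the quantitative stability theorem in its correct (log-weighted far mass plus $W_2$) form and the averaged moment/spread bookkeeping, and without these neither the stated time-scales nor \eqref{thmb2}--\eqref{thmb3} follow.
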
 

The question of whether this implies convergence to the point vortex system on the timescale $T$ is more difficult. It is certainly true that the convergence of the velocity in \eqref{thmb3} implies convergence on a timescale of $O(|\log\eps|)$ by standard ODE arguments, but without further assumptions on the configuration of the point vortices, it is possible that the error in the ODE system grows exponentially, suggesting that any algebraic bound on the difference of the velocities is not sufficient for convergence on longer time-scales. Using a different way of assigning the vortices a point $X_i$, Donati \cite{donati2024construction} also produced an example where confinement holds, but convergence to the point vortex system does indeed fail on algebraic timescales.

In particular, as a consequence of this issue, it is in general not clear whether the condition \eqref{A7} is implied by a condition on the initial data on the full timescale $T$, however one can easily show with a bootstrap argument that if the analogue of \eqref{A7} holds for the solution to the genuine point vortex system, then \eqref{A7} must also hold for the $X_i$ on a time-scale of order $|\log\eps|$.

For positive intensities $a_i$, we can also ensure that \eqref{A7} holds on the full timescale. 

\begin{manualtheorem}{1.1'}\label{T1'}
Assume that all $a_i$ are positive, that $\Omega=\R^2$, assume \eqref{A1}-\eqref{A6} and that \eqref{A7} holds at $t=0$, then there exists a $C_0=C(n,b,N_1,N_2,N_3, \beta,a_1,\dots, a_n,X_1^0,\dots,X_n^0)$ such that the bounds \eqref{thmbd1}, \eqref{thmb2} and \eqref{thmb3} hold.
\end{manualtheorem}

\subsection{Quantitative stability for the interaction energy}\label{sec11}
Our estimates for vortices almost maximizing the kinetic energy are an extension of previous results by Yan and Yao \cite{yan2022sharp}. Since they might be of independent interest, we state them here as a theorem and provide a brief introduction to the topic. 

The fact that radially symmetric and decreasing vortices maximize the kinetic energy is a special fact of the so-called Riesz rearrangement inequality \cite[Thm.\ 3.7]{LiebLoss}, which states that for all compactly supported nonnegative $f_1,f_2,f_3\in L^\infty(\R^d)$ it holds that \begin{align}
\int_{\R^{2d}}f_1(x)f_2(x-y)f_3(y)\dx\dy\leq \int_{\R^{2d}}f_1^*(x)f_2^*(x-y)f_3^*(y)\dx\dy.\label{Riesz}
\end{align}
In general, if $f_1\neq f_3$, the question when equality holds is quite difficult and there are equality cases where the $f_i$ need not be a translation of $f_i^*$, see e.g.\ \cite{burchard1996cases}, making the question of a more quantitative estimate quite challenging, which was first achieved by Christ in \cite{christ2017sharpened} for the special case of indicator functions. In the case where $f_1=f_3$, equality requires that $f_1=f_1^*(\cdot-y)$ for some $y$. In this case quantitative estimates for $\min_y\norm{f_1-f_1^*(\cdot-y)}_{L^1}$ in terms of the energy were given by \cite{burchard2015geometric,burchard2020stability,fusco2020sharp,frank2021proof}.

For more general functions, Yan and Yao proved a result in \cite{yan2022sharp}, which also covers estimates in the $W_2$-distance (with a dependence on the size of the support of $f_1$).

For our purposes, it will be crucial to obtain estimates that provide more information on how far apart in space $f_1$ and $f_1^*$ can be.

%This dependence is quite undesirable for our purposes, because while it is true that the $\omega_i$ will be compactly supported, the support of the $\omega_i$ might be much bigger than the support of the $\omega_i^*$ (cf.\ \eqref{A2} and \eqref{thmb2} above), yielding a suboptimal scaling dependence on $\eps$.

We therefore prove an extension of the result of Yan and Yao, which sharply captures the behaviour of the "far away" part of $f_1$ for the logarithmic energy. This requires the use of Wasserstein distances, the $p$-th Wasserstein distance is defined as \begin{align}
 W_{p}(\mu ,\nu ):=\left(\inf _{\gamma \in \Gamma (\mu ,\nu )}\int _{\R^d\times \R^d}d(x,y)^{p}\mathrm {d} \gamma (x,y)\right)^{\frac {1}{p}},
\end{align}
for measures $\mu,\nu$ on $\R^d$ with the same mass, where $\Gamma (\mu ,\nu )$ denotes the set of all measures on $\R^{2d}$ with marginals $\mu$ and $\nu$. See e.g.\ the textbook \cite{villani2021topics} for background reading. %The only properties which we will use outside of the proof are that $W_1(\mu,\nu)\leq W_2(\mu,\nu)$ and that $W_1$ is equivalent to the $W^{-1,1}$ distance \cite{}. 

\begin{theorem}\label{T2}
There exists a constant $C$, such that if $\rho\in L^\infty(\R^2)$ is compactly supported and nonnegative and such that $\E(\rho^*)-\E(\rho)\leq C\norm{\rho}_{L^1}^2$, then one can split $\rho=\rho^c+\rho^f$ such that $\rho^c$ and $\rho^f$ have disjoint supports and such that \begin{itemize}
\item It holds that \begin{align}\supp \rho^c\subset B_{10\diam\supp \rho^*}(y_0)\label{T21}
\end{align}
where $y_0$ is the center of mass of $\rho^c$, i.e.\ $y_0=\frac{\int \rho^c x\dx}{\int \rho^c\dx}$.
\item It holds that \begin{align}
    \E(\rho^*)-\E(\rho)\geq C\left(\frac{\diam\supp\rho^*}{R_0}\right)R_0^{-2} W_2^2(\rho^c,(\rho^c)^*)\label{T22}
\end{align}
%\item It holds that \begin{align}\supp \rho^f\subset \R^d\backslash B_{5\diam(\supp \rho^*)}(y_0)\label{T23}
%\end{align}
\item \begin{align}
\E(\rho^*)-\E(\rho)\geq C\left(\frac{\diam\supp\rho^*}{R_0}\right)\int_{\R^2} \rho^f(x)\left|\log\frac{x-y_0}{\diam\supp\rho^*}\right|\dx\label{T24}
\end{align}
\end{itemize}
here $R_0:=\norm{\rho}_{L^1}^\frac{1}{2}\norm{\rho}_{L^\infty}^{-\frac{1}{2}}.$
\end{theorem}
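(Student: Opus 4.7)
By homogeneity I normalize so that $\|\rho\|_{L^1}=\|\rho\|_{L^\infty}=1$, hence $R_0=1$. Set $D:=\diam\supp\rho^*$ and $\delta:=\E(\rho^*)-\E(\rho)$; note $D\gtrsim 1$ since any nonnegative $\rho\leq 1$ with mass $1$ has $|\supp\rho|\geq 1$. The first step is to locate the bulk. I would apply the quantitative stability result of Yan--Yao, which produces a preliminary center $\tilde{y}_0$ with $W_2^2(\rho,\rho^*(\cdot-\tilde{y}_0))$ controlled by $\delta$; Markov's inequality then forces all but a small fraction of the mass of $\rho$ into $B_{KD}(\tilde{y}_0)$ for a fixed large $K$. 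Taking $\rho^c:=\rho\,\mathbf{1}_{B_{9D}(\tilde{y}_0)}$, letting $y_0$ be its center of mass, and adjusting the cutoff slightly (or iterating the center-of-mass map once), one obtains a decomposition $\rho=\rho^c+\rho^f$ with $\supp\rho^c\subset B_{10D}(y_0)$, so \eqref{T21} holds, and with $\|\rho^c\|_{L^1}\gtrsim 1$.

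For \eqref{T24} I would compare $\rho$ with the rearrangement $\bar\rho:=\rho^c+(\rho^f)^*(\cdot-z_0)$, where $z_0$ is chosen with $|z_0-y_0|\sim D$ so that the two summands have disjoint supports. Since $\bar\rho$ is a rearrangement of $\rho$, Riesz's inequality yields $\E(\bar\rho)\leq \E(\rho^*)$, and it also gives $\E((\rho^f)^*)\geq \E(\rho^f)$, so it remains to lower-bound the difference of cross-interactions. For $x\in\supp\rho^c\subset B_{10D}(y_0)$ and $y$ in the relocated support of $(\rho^f)^*(\cdot-z_0)$ one has $-\log|x-y|\gtrsim -\log(CD)$, while in the original support of $\rho^f$, $-\log|x-y|\sim -\log|y-y_0|$ for $|y-y_0|\gg D$. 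Subtracting and integrating against $\rho^c$ (with $\|\rho^c\|_{L^1}\gtrsim 1$) produces
\begin{equation*}
\delta\;\geq\; \E(\bar\rho)-\E(\rho)\;\gtrsim\; \int_{\R^2}\rho^f(y)\log\!\frac{|y-y_0|}{CD}\,\dy,
\end{equation*}
which is the desired estimate \eqref{T24}.

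For \eqref{T22} I would use the companion rearrangement $\widehat\rho:=(\rho^c)^*(\cdot-y_0)+\rho^f$, again with disjoint supports. The identity
\begin{equation*}
\E(\widehat\rho)-\E(\rho)=\bigl[\E((\rho^c)^*)-\E(\rho^c)\bigr]+2\bigl[I((\rho^c)^*(\cdot-y_0),\rho^f)-I(\rho^c,\rho^f)\bigr],
\end{equation*}
with $I(f,g):=-\tfrac{1}{2\pi}\int\!\int\log|x-y|f(x)g(y)\,\dx\,\dy$, reduces matters to bounding the cross-term error. The key cancellation is that $(\rho^c)^*(\cdot-y_0)$ and $\rho^c$ share both total mass and center of mass $y_0$: expanding $\log|x-y|$ in Taylor series around $x=y_0$, the constant and linear contributions cancel, leaving only a quadratic remainder of size $O(|x-y_0|^2/|y-y_0|^2)$. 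The resulting error is bounded by $D^2\int\rho^f(y)|y-y_0|^{-2}\,\dy\leq C\int\rho^f(y)|\log(|y-y_0|/D)|\,\dy$, which by \eqref{T24} is $\leq C\delta/D\leq C\delta$. Hence $\E((\rho^c)^*)-\E(\rho^c)\leq C\delta$, and the Yan--Yao $W_2$-stability applied to $\rho^c$ (whose support has diameter $\leq 20D$) now delivers \eqref{T22}.

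The main obstacle will be the cross-term cancellation in the fourth paragraph: the naive bound on the cross-term error is $O(\|\rho^f\|_{L^1})$, which is too crude, and only the combined use of the mass and center-of-mass vanishing together with the already-established \eqref{T24} makes it small enough to close the bootstrap. Undoing the normalization at the end and tracking the precise dependence on $D/R_0$ in the constants $C(D/R_0)$ of \eqref{T22}--\eqref{T24} is then a further (routine) bookkeeping exercise.
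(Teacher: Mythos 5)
Your opening step is where the argument breaks. You propose to apply the Yan--Yao stability theorem directly to $\rho$ to obtain $W_2^2(\rho,\rho^*(\cdot-\tilde y_0))\lesssim \E(\rho^*)-\E(\rho)$ and then localize the bulk by Markov. No such bound is available: the Yan--Yao $W_2$-estimate carries a constant depending on the diameter of the support of the density, and for $\rho$ this diameter is completely uncontrolled (only $\diam\supp\rho^*$, i.e.\ the measure of the support, is controlled). More decisively, the inequality you assert is simply false for general $\rho$: for $\rho=\mathds{1}_{B_1(0)\cup B_\eps(\eps^{-10}e_1)}$ (the example in the Remark following the theorem) one has $\E(\rho^*)-\E(\rho)\approx \eps^2|\log\eps|$, while any transport plan to a translate of $\rho^*$ must move mass $\pi\eps^2$ a distance $\sim\eps^{-10}$, so $W_2^2(\rho,\rho^*(\cdot-y))\gtrsim \eps^{2}\cdot\eps^{-20}$ for every $y$. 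The entire point of the theorem --- and of the paper's Steps 1--2, which localize the bulk by the layer-cake representation of the logarithmic energy, a layerwise application of Riesz, and a pigeonhole argument, and then compactify $\rho$ before invoking Yan--Yao --- is precisely that the far mass can only be controlled in the logarithmically weighted sense \eqref{T24}, never in $W_2$ of the full $\rho$. Without this step you have no point around which at least a fixed fraction of the mass concentrates, so neither $\|\rho^c\|_{L^1}\gtrsim 1$ nor any smallness of $\|\rho^f\|_{L^1}$ is available to you.

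This gap then propagates. In your derivation of \eqref{T24}, relocating $(\rho^f)^*$ produces error terms of size $\|\rho^f\|_{L^1}\log(CD)$ (from the crude bound on the relocated cross-interaction) and $\|\rho^f_{\mathrm{near}}\|_{L^1}$ (from the lost positive interaction of $\rho^f$ with $\rho^c$ across the cut, where $-\log|x-y|$ is singular); absorbing these only yields a lower bound with weight $\log\bigl(|y-y_0|/(C'D)\bigr)$, which is negative on the annulus $10D\leq|y-y_0|\leq C'D$ and hence does not give \eqref{T24} unless the far mass is already known to be small --- circular, since that smallness was to come from the invalid first step. In your derivation of \eqref{T22}, the Taylor expansion of $\log|x-y|$ around $x=y_0$ has a Hessian of size $\dist(x,y)^{-2}$, which blows up when $x\in\supp\rho^c$ and $y\in\supp\rho^f$ approach the cut sphere from opposite sides, so the quadratic-remainder bound $O(|x-y_0|^2/|y-y_0|^2)$ is not valid there; this near-cut region must be treated separately (the paper does this with a Young-convolution estimate, which is only useful once the tail bound \eqref{T24} gives $\|\rho^f\|_{L^1}\lesssim\delta$). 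Your idea for \eqref{T22} --- a companion rearrangement sharing mass and center of mass with $\rho^c$, plus Yan--Yao applied to the compactly supported $\rho^c$ --- is close in spirit to the paper's Steps 2--3 and could be repaired, but only after an independent localization and tail estimate of the type the paper obtains in Step 1.
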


The role of these quantities and estimates in the proof of Theorem \ref{T1} will be explained in Subsection \ref{ovsec}.

\begin{remark}\begin{itemize}
\item Using the center of mass of $\rho^c$ instead of the center of mass of $\rho$ is necessary, for instance the example $\rho=\mathds{1}_{B_1(0)\cup B_\eps(\eps^{-10}e_1)}$ (where the difference of energies is $\approx \eps^2|\log\eps|$ by direct calculation) shows that $\rho^c$ need to not be close to the center of mass of $\rho$. Similar examples also show the sharpness of the asymptotic in \eqref{T24}. %The dependence on the size of $\diam\supp\rho^*$ is also necessary, cf.\ 
%\item The exponent $2$ on the right-hand side of \eqref{} is sharp, as one can easily see from considering ellipses close to a circle, it is quite remarkable though that for $\rho^f$ the linear bound is true, even though formally $\E(\rho^*)-\E(\rho)$ has a double zero at $\rho=\rho^*$.
\item Our proof also works for power law kernels and on the $\R^d$ (for $d\geq 2$), if $\E_{\alpha}(\rho):=\int \sgn(\alpha)|x-y|^\alpha\rho(x)\rho(y)\dx\dy$ (with $\alpha\in (-d,2))$, then it holds that \begin{align*}
    \E_\alpha(\rho^*)-\E_\alpha(\rho)\geq C_\alpha\left(\frac{\diam\supp\rho^*}{R_0}\right) R_0^{-2+\alpha} W_2^2(\rho^c,(\rho^c)^*)
\end{align*}
and
\begin{align*}
\E_\alpha(\rho^*)-\E_\alpha(\rho)\geq C_\alpha\left(\frac{\diam\supp\rho^*}{R_0}\right)\int_{\R^d}\rho^f |x-y_0|^{\max(0,\alpha)}\dx,
\end{align*}
where $R_0=\norm{\rho}_{L^1}^\frac{1}{d}\norm{\rho}_{L^\infty}^\frac{1}{d}$.
\end{itemize}
\end{remark}

\subsubsection{Organization of the article and notational conventions}
Theorem \ref{T2} is proven in Section \ref{seprt2}, the Theorems \ref{T1} and \ref{T1'} are proven in Section \ref{vortprsec}, which also contains an outline of the proof strategy.

We write $A\lesssim B$ if there is some constant $C$, possibly depending on $N_1,N_2,N_3,b,\beta,\Omega,a_1,\dots$, but not $\eps,f,\rho,\omega$ such that $A\leq CB$. Sometimes we still write the constants and some of the dependencies to highlight them, in this case, the constant is allowed to change its value from line to line. We denote indicator functions with $\mathds{1}$.

\section{Proof of Theorem \ref{T2}}
\label{seprt2}
Our strategy is to first prove the bound on the ``far away'' parts (Step 1 below) and to then use this to ``compactify'' $\rho$, use the estimate by Yan and Yao from \cite{yan2022sharp} for the compactified version (Step 2) and to finally show that the bounds on the compactified version of $\rho$ imply the bounds on $\rho$ (Step 3). We remark that one can skip most parts of steps 2 and 3 if one allows $y_0$ to be any point instead of the center of mass of $\rho^c$.

%The proof follows relatively directly from combining the $H^{-1}$-estimate of Yao and Yan in \cite{} for compactly supported functions with an estimate for the $L^1$-distance with an $L^1$-estimate due to Frank and Lieb in \cite{}, which in turn is based on a quantitative Riesz-inqueality by Christ.
We first note that both sides of the inequality are homogeneous in $\rho$, so it is not restrictive to assume that $\norm{\rho}_{L^\infty}=1$. Furthermore, it holds that  \begin{align*}
\mel\mathcal{E}(\rho(c\cdot)^*)-\E(\rho(c\cdot))=\int_{\R^{2+2}} \frac{-1}{2\pi c^2}\log\left(\frac{|x-y|}{c}\right)\left(\rho^*(x)\rho^*(y)-\rho(x)\rho(y)\right)\dx\dy\\
&=\int_{\R^{2+2}} \frac{-1}{2\pi c^2}\log|x-y|\big(\rho^*(x)\rho^*(y)-\rho(x)\rho(y)\big)\dx\dy+\frac{1}{2\pi c^2}\log(c)(\norm{\rho^*}_{L^1}^2-\norm{\rho}_{L^1}^2)\\
&=\frac{1}{c^2}\left(\mathcal{E}(\rho^*)-\E(\rho)\right),
\end{align*}
and the left-hand side is easily seen to scale in the same way with respect to dilations. Therefore, we can also rescale space so that it holds that $\norm{\rho}_{L^1}=1$.

%By rescaling space and the amplitude of $\rho$, it is not restrictive to only consider the case $\int \rho\dx=\norm{\rho}_{L^\infty}=1$. 
We set \begin{align}
D:=\diam \supp \rho^*\geq \frac{2}{\sqrt{\pi}},\label{bd D}
\end{align}
where the inequality follows from the fact that $1=\norm{\rho^*}_{L^1}\leq \frac{\pi}{4}D^2\norm{\rho^*}_{L^\infty}=\frac{\pi}{4}D^2$.
We need to show the bounds \eqref{T22} and \eqref{T24} with a constant depending on $D$.
For the ease of notation, we set \begin{align*}
    \delta:=\E(\rho^*)-\E(\rho).
\end{align*} 
Let \begin{align*}
D'>>\max(\diam\supp \rho, D)
\end{align*}
be finite.

\textbf{Step 1.} In order to show the bound on $\rho^f$, we rewrite the energy, using Fubini and the fundamental theorem of calculus, as \begin{align*}
\mel\E(f)+\frac{1}{2\pi}\log(D')\norm{f}_{L^1}^2=\int_{\R^2\times \R^2}\frac{-1}{2\pi}\left(\log|x-y|-\log(D')\right)f(x)f(y)\dx\dy\\
&=\int_{\R^2\times \R^2}\left(\int_{|x-y|}^{D'}\frac{1}{2\pi z}\dz\right)f(x)f(y)\dx\dy\\
&=\int_{0}^{D'}\frac{1}{2\pi z}\int_{|x-y|\leq z}f(x)f(y)\dx\dy\dz.
\end{align*}
%
%
%Observe that by the Riesz rearrangment inequality $\int_{|x-y|\leq z}f(x)f(y)\dx\dy$ is maximized by $f=\rho^*$ among all rearrangments of $\rho$ and for $f=\rho^*$ this integral equals $1$ for $z\geq D$. 
Therefore, we can write, using the definition of $D$, \begin{equation}\begin{aligned}
\mel\delta=\int_{0}^{D}\frac{1}{2\pi z}\int_{|x-y|\leq z} \rho^*(x)\rho^*(y)-\rho(x)\rho(y)\dx\dy\dz\\
&+\int_{D}^{D'}\frac{1}{2\pi z}\left(1-\int_{|x-y|\leq z}\rho(x)\rho(y)\dx\dy\right)\dz.\label{ineq far part}
\end{aligned}\end{equation}
From the Riesz rearrangement inequality \eqref{Riesz}, applied to $f_2=\mathds{1}_{B_z(0)}$, we infer that the inner term in both integrals is positive for each $z$.  Since $\int_{|x-y|\leq z}\rho(x)\rho(y)\dx\dy$ is increasing in $z$, we see from considering $z\in [D,D+\frac{1}{100}]$, that \begin{align*}
\int_{|x-y|\leq D+\frac{1}{100}}\rho(x)\rho(y)\dx\dy\geq 1-C(D)\delta.
\end{align*}
By the pigeonhole principle and the assumption that $\delta$ is small, we see that this implies the existence of an $x_0$ such that \begin{align}
\int_{|x_0-y|\leq D+\frac{1}{100}}\rho(y)\dy\geq \frac{1}{2}.\label{bd x0}
\end{align}
On the other hand, we see from $\int \rho(x)\rho(y)\dx\dy=1$ and \eqref{ineq far part} that
\begin{align*}
\int_{D}^{D'}\frac{1}{2\pi z}\int_{|x-y|\geq z}\rho(x)\rho(y)\dx\dy\dz\leq \delta.
\end{align*}
We note that $|x-x_0|\leq {D+\frac1{100}}$ and  $|y-x_0|\geq z+D+\frac{1}{100}$ together imply that $|x-y|\geq z$ and therefore obtain together with \eqref{bd x0} and Fubini that \begin{align*}
\delta\geq \int_{D}^{D'}\frac{1}{2\pi z}\int_{x\in B_{D+\frac{1}{100}}(x_0),\, |y-x_0|\geq z+D+\frac{1}{100}}\rho(x)\rho(y)\dx\dy\dz\geq \int_{D}^{D'}\frac{1}{4\pi z}\int_{|y-x_0|\geq D+\frac{1}{100}+z}\rho(y)\dy\dz.
\end{align*}
We can further rewrite this integral as \begin{align*}
\mel\int_{D}^{D'}\frac{1}{4\pi z}\int_{|y-x_0|\geq D+\frac{1}{100}+z}\rho(y)\dy\dz=\frac{1}{4\pi}\int_{|y-x_0|\geq 2D+\frac{1}{100}}\rho(y)\int_{D}^{|y-x_0|-(D+\frac{1}{100})}\frac{1}{z}\dz\dy\\
&\gtrsim\int_{|y-x_0|\geq \frac{5}{2}D}\rho(y)\log\left(\frac{|y-x_0|}{D}\right)\dy,
\end{align*}
where we have used the fact that $D'$ is much bigger than all the other parameters and the lower bound \eqref{bd D} on $D$ to absorb the $+\frac{1}{100}$.

In sum, we have obtained the bound \begin{align}
\int_{|y-x_0|\geq \frac{5}{2}D}\rho(y)\log\left(\frac{|y-x_0|}{D}\right)\dy\leq C(D)\delta.\label{log bd}
\end{align}

\textbf{Step 2.} We next define a suitable compactly supported rearrangement of $\rho$.
There exists a (measure-preserving) rearrangment of $\rho \mathds{1}_{B_{3D}(x_0)^c}$ which is  supported in $B_{4 D}(x_0)\backslash B_{3D}(x_0)$ because $\mathcal{L}^2(\supp \rho)=\frac{\pi}{4}D^2$ (by the definition \eqref{bd D} of $D$) is much smaller than $|B_{4 D}(x_0)\backslash B_{3D}(x_0)|$. Let $\bar{\rho}$ be such a rearrangment of $\rho \mathds{1}_{B_{3D}(x_0)^c}$.
We now define a rearrangment $\tilde{\rho}$ of $\rho$ as \begin{align*}
\tilde{\rho}:=\begin{cases}
\rho &\quad \text{ on $B_{3D}(x_0)$}\\
\bar{\rho}&\quad \text{ on $B_{3D}(x_0)^c$},
\end{cases}
\end{align*}
which is clearly supported on $B_{4D}(x_0)$.

We also note that it holds that \begin{align}
\norm{\rho\mathds{1}_{B_{\frac{5}{2}D}(x_0)^c}}_{L^1}+\norm{\tilde{\rho}\mathds{1}_{B_{\frac{5}{2}D}(x_0)^c}}_{L^1}\lesssim \delta,\label{bd far part}
\end{align}
by \eqref{log bd}, because the logarithm is always $\geq \frac{1}{2}$ there.
We claim that \begin{equation}
\E(\tilde{\rho})\geq \E(\rho)-C(D)\delta.\label{claim}
\end{equation}
for some numerical constant $C(D)$, depending only on $D$, but not on $\delta$ or $\rho$.

Using that $\rho$ and $\tilde{\rho}$ agree in a ball of radius $B_{3D}(x_0)$, we may expand the definition and write \begin{align*}\mel\E(\tilde{\rho})- \E(\rho)=
\int_{\R^2\times \R^2} \frac{-1}{2\pi}\log|x-y| \left(\tilde{\rho}(x)+\mathds{1}_{B_{3D}(x_0)}(x)\tilde{\rho}(x)\right)\bar{\rho}(y)\dx\dy
\\
&-2\int_{B_{\frac{5}{2}D}(x_0)\times B_{3D}(x_0)^c}\frac{-1}{2\pi}\log|x-y|\rho(x)\rho(y)\dx\dy\\
&-\int_{\R^2\times B_{3D}(x_0)^c}\frac{-1}{2\pi}\log|x-y|\Big(\rho(x)\mathds{1}_{ B_{\frac{5}{2}D}(x_0)^c}(x)+\rho(x)\mathds{1}_{B_{3D}(x_0)\backslash B_{\frac{5}{2}D}(x_0)}(x)\Big)\rho(y)\dx\dy\\
&=:I-II-III,
\end{align*}
where $I,II,III$ stand for the terms in each line.
%\underbrace{2\int_{B_{5D}\times \R^2}\frac{-1}{2\pi}\log|x-y|\rho(x)(\bar{\rho}-\chi_{B_{10D}(x_0)}^c\rho)(y)\dx\dy}_{=:I}\\
%&\quad+\underbrace{2\int_{(B_{10D}(x_0)\backslash B_{5D}(x_0))\times \R^2}\frac{-1}{2\pi}\log|x-y|\rho(x)(\bar{\rho}-\chi_{B_{10D}(x_0)}^c\rho)(y)\dx\dy}_{=:II}\\
%&\quad+\underbrace{\E(\bar{\rho})}_{=:III}-\underbrace{\E(\rho\chi_{B_{10D}(x_0)}^c)})}_{=:IV}.

In $I$, we may estimate the logarithm from above with $\log(8D)$ and hence see that \begin{align*}
I\geq -\frac{1}{\pi}\int_{\R^2\times \R^2}\log(8D)\tilde{\rho}(x)\bar{\rho}(y)\dx\dy \geq- \frac{1}{\pi}\log(8D)\norm{\tilde{\rho}}_{L^1}\norm{\bar{\rho}}_{L^1}\geq -C(D)\delta,
\end{align*}
where we used the bound $\norm{\bar{\rho}}_{L^1}\leq \delta$ which follows directly from \eqref{bd far part}.

In $II$, we simply note that the logarithm here is bounded from below by the bound \eqref{bd D} on $D$ and hence by \eqref{bd far part} \begin{equation*}
II\leq \left|\log\frac{1}{2}D\right| \norm{\rho}_{L^1}\norm{\rho\mathds{1}_{B_{3D}(x_0)^c}}_{L^1}\leq  C(D)\delta.
\end{equation*}
In $III$, we need to only consider the points with $|x-y|\leq 1$ to get a lower bound. Using that $\mathds{1}_{B_1(0)}\log|\cdot|$ is in every $L^p$ for $p<\infty$ and Young's convolution inequality, we hence see that \begin{align*}
III&\geq -\norm{\rho\mathds{1}_{ B_{\frac{5}{2}D}(x_0)^c}+\rho\mathds{1}_{B_{3D}(x_0)\backslash B_{\frac{5}{2}D}(x_0)}}_{L^1}\norm{\rho\mathds{1}_{ B_{3D}(x_0)^c}}_{L^2}\norm{\mathds{1}_{B_1(0)}\log|\cdot|}_{L^2}\\
&\gtrsim- \norm{\rho\mathds{1}_{ B_{\frac{5}{2}D}(x_0)^c}+\rho\mathds{1}_{B_{3D}(x_0)\backslash B_{\frac{5}{2}D}(x_0)}}_{L^1}\norm{\rho\mathds{1}_{ B_{3D}(x_0)^c}}_{L^1}^\frac{1}{2}\norm{\rho\mathds{1}_{ B_{3D}(x_0)^c}}_{L^\infty}^\frac{1}{2}\\
&\gtrsim -\delta^\frac{3}{2},
\end{align*}
where we have used the bound \eqref{bd far part} on the $L^1$-norm.
Together, the claim \eqref{claim} follows.

\textbf{Step 3.}
We now have \begin{align*}
\mathcal{E}(\rho^*)-\mathcal{E}(\tilde{\rho})\leq C(D)\delta.
\end{align*}
Furthermore $\supp \tilde{\rho}\subset B_{4D}(x_0)$ by definition and therefore we may apply the $W^2$-stability estimate of Yan and Yao \cite[Thm.\ 1.2]{yan2022sharp} to see that \begin{equation}
W_2^2(\rho^*(\cdot-x_1),\tilde{\rho})\leq C(D) \delta,\label{tilde bd}
\end{equation}
where $x_1$ is the center of mass of $\tilde{\rho}$ (i.e.\ $x_1=\int \tilde{\rho}(x) x\dx$) and $\tilde{\rho}^*=\rho^*$ by definition.

We set \begin{align*}
    &\rho^c:=\mathds{1}_{3D}(x_0)\tilde{\rho}\\
    &y_0:=\frac{1}{\int_{B_{3D}(x_0)}\rho(x)\dd x}\int\rho^c(x) x\dx.
\end{align*}
We then have that \begin{align}
|y_0-x_1|\leq C(D)\delta.\label{cmass est}
\end{align}
Indeed, it follows from the mass estimate \eqref{bd far part} that \begin{align*}
|y_0-x_1|\lesssim \left|1-\frac{1}{\int_{B_{3D}(x_0)}\rho(x)\dd x}\right|\int_{B_{3D}(x_0)} \rho(x)|x-x_1|\dx+ \int_{\R^2\backslash B_{3D}(x_0)}\tilde{\rho}(x)|x-x_1|\dd x\lesssim C(D)\delta.
\end{align*}
%
%Indeed, it follows from the definition of the Wasserstein distance that \begin{align}
%(4D)^2\left(\int \mathds{1}_{B_{5D}(x_1)^c}\tilde{\rho}\dx\right)^2\leq W_2^2(\rho^*(\cdot+x_1),\tilde{\rho})\leq C(D) \delta
%\end{align}
%
%and hence 
%
Since furthermore $x_1$ must lie in the support of $\tilde{\rho}$ and therefore in $B_{4D}(x_0)$, we hence see \eqref{T21} if $\delta<<1$. The estimate \eqref{T24} follows from \eqref{log bd}. To see \eqref{T22}, we make use of the metric property of the Wasserstein distance and the fact that it is controlled by the $L^1$-norm for compactly supported functions (\cite[Prop.\ 7.10]{villani2021topics}) to see that \begin{align*}
\mel W_2^2((\rho^c)^*(\cdot-y_0),\rho^c)\\
&\leq 4\Big(W_2^2\big((\rho^c)^*(\cdot-x_1),(\rho^c)^*(\cdot-y_0)\big)+W_2^2\big((\rho^c)^*(\cdot-x_1),\norm{\rho^c}_{L^1}\tilde{\rho}^*(\cdot-x_1)\big)\\
&\quad+W_2^2\big(\norm{\rho^c}_{L^1}\tilde{\rho}^*(\cdot-x_1),\norm{\rho^c}_{L^1}\tilde{\rho}\big)+W_2^2\big(\rho^c,\norm{\rho^c}_{L^1}\tilde{\rho}\big)\Big)\\
&\leq C(D)\left( |y_0-x_1|^2+\delta+\norm{\tilde{\rho}-\rho^c}_{L^1}\right)\\
&\leq C(D) \delta,
\end{align*}
where we have used \eqref{cmass est}, \eqref{tilde bd} and \eqref{bd far part} again.\hfill\qedsymbol

\section{Proof of Thm.\ \ref{T1} and \ref{T1'}}\label{vortprsec}

\subsection{Notation and outline of the main ideas}\label{ovsec}
%The proof of Thm.\ \ref{T1} hinges on a careful analysis of the interplay of energy, confinement and momentum-like quantities.
%We make the following definitions:

%Let $X_i$ be the center of mass of the $i$-the vortex for $i=1,\dots, n$, i.e.\ \begin{equation}
%X_i=\frac{1}{a_i}\int_\Omega \omega_i(x) x\dx.
%\end{equation}

We denote the velocities in the point vortex system by $u^p$ and $u_i^p$ as in the introduction, i.e.\ \begin{align}
u^p(x)&=\sum_{i=1}^n -a_i\nabla^\perp G(x,X_i)\label{def up}\\
u_i^p(x)&=u^p(x)-a_i\frac{(x-X_i)^\perp}{2\pi|x-X_i|^2}.\label{def uip}
\end{align}
%
%
%One of the main ideas is that due to the stability estimates on the individual vortices, the velocity at points with a distance $>>\eps$ from the $X_i$ is going to be very similar to the one in the point vortex system (see Lemma \ref{lem mean value} below).
%
The second main idea of the proof (aside from the quantitative energy estimates) is that, as already noticed in \cite{butta2018long}, there should be some massive cancellations in the interactions between the vortices due to the separation of time scales and, at least heuristically, all the interactions from the boundary and the other vortices with $\omega_i$ should be "averaged" over the rotations of $\omega_i$ around itself.
This kind of mechanism is called "averaging principle" in the dynamical systems literature, and we refer to e.g.\ the textbook \cite{sanders1985theory} for further reading.

Our method for implementing this for our problem is to use functionals which would capture this effect for the genuine point vortex system and to use the energy estimates to compare the velocity with the one of the point vortex system (cf.\ Lemma \ref{lem mean value} below).

It is natural to look at the Hamiltonian of the motion of $x$, relative to the motion of $X_i$, which we renormalize to behave like $|x-X_i|$ at leading order; we set

%In the point vortex system, it is very hard for a point close to one of the point vortices to move away from them, because if one "freezes" the velocity, the streamlines close to a point vortex are slightly perturbed circles and in particular closed. Furthermore, one can check that the streamline with a distance $S$ to a vortex has a time derivative of order $S^3$, which yields that such a point can only move away from the vortex with an effective velocity of order $S^3$, even though at each time instant the velocity difference is of order $S$.

%This idea suggests that if one wants to obtain good estimates, it is natural to use the Hamiltonian of the point vortex system to measure confinement.

 \begin{align*}
\Psi_i(x):=&a_i\left(G(x,X_i)-\gamma(X_i,X_i)-\nabla\gamma(X_i,X_i)\cdot(x-X_i)\right)\\
&+\sum_{j\neq i} a_j\left(G(x,X_j)-G(X_i,X_j)-\nabla G(X_i,X_j)\cdot(x-X_i)\right)
\end{align*}
and \begin{align*}
d_i(x):=\exp\left(-\frac{2\pi}{a_i}\Psi_i(x)\right)
\end{align*}
(interpreted as $0$ at $x=X_i$), which is a function of $x,X_1,\dots X_n$, written as a function of $x$ only for simplicity.

We observe that $\Psi_i$ is the streamfunction of the velocity $u^p(x)-u_i^p(X_i)$ (modulo a constant), directly from the definitions. Therefore, we must have \begin{equation} \label{d ort}
\nabla_x f(d_i(x))\perp u^p(x)-u_i^p(X_i)
\end{equation}
for every $f\in C^1$.
On the other hand, this function is also almost the distance $|x-X_i|$ as the following Lemma shows, whose proof we postpone.

\begin{lemma}\label{d is nice}
Let $|x-X_i|\leq \min(\frac{1}{2},\frac{b}{2})$, then, whenever \eqref{A7} holds, we have the following estimates \begin{align}
|d_i(x)-|x-X_i||\lesssim |x-X_i|^3\label{d est1}
\end{align}
and \begin{align}
|\mathrm{D}_{X_j}d_i(x)|\lesssim& |x-X_i|^3 \quad \text{ for $j\neq i$}\label{d est2}\\
|\mathrm{D}_{X_i}d_i(x)+\mathrm{D}_x d_i(x)|\lesssim& |x-X_i|^3\label{d est3}\\
\left|\scalar{\mathrm{D}_xd_i(x)}{(x-X_i)^\perp}\right|\lesssim& |x-X_i|^4\label{ort deri est}\\
\left|\mathrm{D}_x d_i(x)\right|\lesssim& 1 \quad\text{ for $x\neq X_i$.}\label{d deri 1}
\end{align}
Furthermore, \begin{align}
\left|\mathrm{D}_x^2 d_i(x)^2-2I\right|\lesssim |x-X_i|^2,\label{2nd deri d}
\end{align}
in particular, this second derivative is uniformly bounded and $d_i^2$ is convex in a neighborhood of $X_i$.
\end{lemma}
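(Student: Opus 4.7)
The plan is to strip the singular part from $\Psi_i$ and reduce everything to Taylor bookkeeping for a smooth, harmonic remainder. Using $G(x,y) = -\frac{1}{2\pi}\log|x-y| + \gamma(x,y)$ and setting
\begin{equation*}
H_i(x) := a_i\bigl[\gamma(x,X_i) - \gamma(X_i,X_i) - \nabla_1\gamma(X_i,X_i)\cdot(x-X_i)\bigr] + \sum_{j\neq i} a_j\bigl[G(x,X_j) - G(X_i,X_j) - \nabla_1 G(X_i,X_j)\cdot(x-X_i)\bigr],
\end{equation*}
one has $\Psi_i(x) = -\frac{a_i}{2\pi}\log|x-X_i| + H_i(x)$, and consequently $d_i(x) = |\xi|\,\exp(-\tfrac{2\pi}{a_i}H_i(x))$ with $\xi := x - X_i$. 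The separation assumption \eqref{A7} makes each $\gamma(\cdot,X_i)$ and each $G(\cdot,X_j)$ (for $j\neq i$) smooth and harmonic in $x$ on $B_{b/2}(X_i)$, with $C^k$-bounds uniform in the configuration, so $H_i$ itself is smooth and harmonic on that ball. The Taylor subtractions are arranged so that $H_i(X_i) = 0 = \nabla_x H_i(X_i)$, and hence $H_i(X_i+\xi) = O(|\xi|^2)$.

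The bound \eqref{d est1} is then immediate from $\exp(-\tfrac{2\pi}{a_i}H_i) = 1 + O(|\xi|^2)$. For \eqref{d est2}, only the $j$-th summand of $H_i$ depends on $X_j$, and differentiating that block in $X_j$ yields another second-order Taylor remainder in $\xi$, so $D_{X_j}H_i = O(|\xi|^2)$, which multiplied by the $|\xi|$-prefactor of $d_i$ gives $|D_{X_j}d_i|\lesssim|\xi|^3$. For \eqref{d est3}, passing to $\xi$ as an independent variable converts the combined derivative $D_x + D_{X_i}$ into $\partial_{X_i}$ at fixed $\xi$; the $|\xi|$-prefactor is invariant under this change, so it suffices to show $\partial_{X_i}|_\xi H_i(X_i+\xi) = O(|\xi|^2)$. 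Each block of $H_i$ has the Taylor-remainder form $f(X_i+\xi) - f(X_i) - \nabla f(X_i)\cdot\xi$ for a smooth function $f$ depending on $X_i$ and the other $X_k$'s; applying $\partial_{X_i}|_\xi$ to such a block produces another second-order Taylor remainder in $\xi$, and the bound follows.

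The orthogonality estimate \eqref{ort deri est} is the key nontrivial point and uses the harmonicity of $H_i$. From $D_x d_i = -\tfrac{2\pi}{a_i}d_i\nabla\Psi_i$ and $\nabla\Psi_i = -\tfrac{a_i}{2\pi}\xi/|\xi|^2 + \nabla H_i$, the radial singular piece annihilates $\xi^\perp$, so
\begin{equation*}
\langle D_x d_i, \xi^\perp\rangle = -\tfrac{2\pi}{a_i}\,d_i\,\langle \nabla H_i, \xi^\perp\rangle.
\end{equation*}
The harmonicity and second-order vanishing of $H_i$ at $X_i$ allow the Taylor expansion $H_i(X_i+\xi) = \sum_{k\geq 2} P_k(\xi)$ with each $P_k$ a harmonic homogeneous polynomial of degree $k$ in two variables; writing $P_k = r^k(A_k\cos k\theta + B_k\sin k\theta)$ in polar coordinates gives $\langle\nabla P_k,\xi^\perp\rangle = \partial_\theta P_k = O(r^k)$, whence $\langle\nabla H_i,\xi^\perp\rangle = O(|\xi|^2)$. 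To reach the stated exponent I would run the same calculation at the level of $d_i^2 = |\xi|^2\exp(-\tfrac{4\pi}{a_i}H_i) = |\xi|^2 - \tfrac{4\pi}{a_i}|\xi|^2 P_2(\xi) + O(|\xi|^5)$: the leading $|\xi|^2$ is radial and contributes nothing against $\xi^\perp$, while the gradient of the $|\xi|^2 P_2$ correction is $2\xi P_2 + |\xi|^2\nabla P_2$, whose projection on $\xi^\perp$ equals $|\xi|^2\partial_\theta P_2 = O(|\xi|^4)$; the bound on $\langle D_x d_i,\xi^\perp\rangle$ then follows via the identity $\nabla d_i^2 = 2 d_i \nabla d_i$.

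Finally, \eqref{d deri 1} is immediate from $|D_x d_i|\leq\tfrac{2\pi}{|a_i|}d_i|\nabla\Psi_i|\lesssim|\xi|\cdot(|\xi|^{-1}+1)\lesssim 1$, and \eqref{2nd deri d} follows from $d_i^2 = |\xi|^2 + O(|\xi|^4)$ with smooth remainder, combined with $\nabla^2|\xi|^2 = 2I$, giving $\nabla^2 d_i^2 = 2I + O(|\xi|^2)$ and hence uniform convexity of $d_i^2$ in a small neighborhood of $X_i$. The main obstacle is the orthogonality estimate \eqref{ort deri est}: while the other bounds are routine Taylor counting once the decomposition $\Psi_i = (\text{singular}) + H_i$ is in place, \eqref{ort deri est} genuinely requires both the harmonicity of $H_i$ and the specifically two-dimensional fact that a degree-$k$ harmonic polynomial has angular derivative of full order $r^k$.
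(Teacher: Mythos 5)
Your decomposition is the same as the paper's: writing $\xi=x-X_i$, the paper sets $d_i(x)=|\xi|e^{g(x)}$ with $g=-\tfrac{2\pi}{a_i}H_i$ smooth near $X_i$ (by \eqref{A7}) and vanishing together with its gradient at $X_i$, and your treatment of \eqref{d est1}, \eqref{d est2}, \eqref{d est3}, \eqref{d deri 1} and \eqref{2nd deri d} is the same Taylor bookkeeping the paper performs (the paper does not even need harmonicity for these).

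The gap is at \eqref{ort deri est}, which you yourself identify as the crux. Your direct computation gives $\langle D_x d_i,\xi^\perp\rangle=-\tfrac{2\pi}{a_i}\,d_i\,\partial_\theta H_i$ with $\partial_\theta H_i=\partial_\theta P_2+O(|\xi|^3)=O(|\xi|^2)$, hence only $O(|\xi|^3)$; and the proposed upgrade through $d_i^2$ does not change this. You correctly obtain $\langle \nabla (d_i^2),\xi^\perp\rangle=O(|\xi|^4)$, but the final step ``the bound then follows via $\nabla d_i^2=2d_i\nabla d_i$'' divides by $2d_i\approx|\xi|$ and so returns exactly $\langle\nabla d_i,\xi^\perp\rangle=O(|\xi|^3)$: the two computations are the same computation and cannot produce different exponents. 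To reach $|\xi|^4$ you would need $\partial_\theta P_2\equiv 0$, i.e.\ the quadratic term of $H_i$ to be radial; a radial harmonic quadratic is zero, while $D^2H_i(X_i)$ is built from the Hessians at $X_i$ of $\gamma(\cdot,X_i)$ and of $G(\cdot,X_j)$, $j\neq i$, which do not vanish for a general configuration. So as written your argument proves \eqref{ort deri est} only with exponent $3$, not the stated exponent $4$. For comparison, the paper's proof gets the extra power by writing $d_i=|\xi|\exp\bigl(|\xi|^2\cdot g/|\xi|^2\bigr)$ and treating $g/|\xi|^2$ as a smooth function near $X_i$, so that only the term $|\xi|^2\nabla\bigl(g/|\xi|^2\bigr)$ pairs nontrivially with $\xi^\perp$ and costs one further factor of $|\xi|$; your harmonic expansion makes visible that this smoothness is precisely what fails when $P_2\neq 0$ (for $g=\xi_1^2-\xi_2^2$ one has $g/|\xi|^2=\cos 2\theta$, which is not $C^1$ at the origin), so your difficulty is genuine and is not resolved by the $d_i^2$ manoeuvre. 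Either you must supply an additional structural reason forcing $D^2H_i(X_i)=0$ — which the hypotheses do not provide — or accept the weaker exponent and track its effect where \eqref{ort deri est} is used (the $\Omega_3$ estimate in the proof of \eqref{d est m}).
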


It is not difficult to check that with the cancellation \eqref{d ort} and these estimates, one can capture the "averaging principle" for the genuine point vortex velocity. A short calculation which we omit here (and which is also not relevant for the rest of the proof) shows that if $x$ and $X_i$ evolve according to the point vortex system, then it holds that $|\frac{\mathrm{d}}{\mathrm{d}t}d_i(x)|\lesssim |d_i(x)|^3$, which is much better than the naive estimate for $|x-X_i|$.

In our case, we will also have to deal with the errors from the fact that the velocity is not the exact point vortex velocity.
Our strategy for this is to adapt the method used by Gamblin, Iftimie, and Sideris in \cite{iftimie1999evolution} (in the form in which it is presented in the lecture notes \cite{iftimie2004large}) for a single vortex with $d_i$ replacing the distance $|x-X_i|$.
We define the following modified versions of the momenta and the spread, which will be used in the subsequent analysis: \begin{align}
&M_k(t):=\sum_{i=1}^n\int_{\Omega}|\omega_i^t(x)|\eta_\eps(d_i(x))d_i(x)^k\dx\label{def M}\\
&S=\max_{i=1,\dots,n}\max\left(40N_1\eps, \sup_{x\in \supp \omega_i^t}d_i(x)\right).\label{def S}
\end{align}
Here $\eta_\eps=\eta_1(\eps^{-1}\cdot)$ is a smooth non-negative cutoff function, which equals $1$ on $[80N_1\eps,\infty)$ and is supported in $[40N_1\eps,\infty)$ (the constant $N_1$ is the one from \eqref{A2}). 

In order to use Theorem \ref{T2}, we introduce the energy defect as \begin{align*}
\mathcal{D}(t)=\sum_{i=1}^n \E(|\omega_i|^*)-\E(\omega_i^t,
\end{align*}
where $|\omega_i|^*=|\omega_i^t|^*$ is the symmetric decreasing rearrangement of $|\omega_i|$ as defined in \eqref{sym rea}, \eqref{sym rea2}. $|\omega_i|^*$ does not depend on $t$ because the vorticity is transported.
As $|\omega_i|^*$ maximizes $\E$ among all rearrangements of $\omega_i$ if $\omega_i\geq 0$ (resp.\ among all rearrangements of $-\omega_i$ if $\omega_i\leq 0$), and $\E$ is even, this is non-negative, and by the assumption \eqref{A6} it also holds that \begin{align}
 0\leq \mathcal{D}(0)\lesssim \eps^\beta.\label{D init}
\end{align} 
For technical reasons, we will prove all subsequent estimates under the assumption that \begin{align}\label{control cmass}
\frac{M_1}{\eps}+\mathcal{D}+\max_{i,\, x\in \supp \omega_i} |X_i-x| &\leq C_1,\tag{B1}
%\mathcal{D}+\max_{i,\, x\in \supp \omega_i} |X_i-x|& \leq C_2, \tag{B4} \label{B4}
\end{align}
where $C_1$ is a sufficiently small constant of order $1$, which is allowed to depend on $b, n ,\Omega, N_1$ etc.\ but not on $\eps$. 
We will later see that this assumption holds up to the time $T$ in the theorem.

We will not estimate the derivative of $\mathcal{D}$ itself, instead we will use that \begin{align*}
\mel-\mathcal{D}+\int_{\Omega^2}\sum_{i=1}^n \gamma(x,y)\omega_i(x)\omega_i(y)+\sum_{i\neq j} G(x,y)\omega_i(x)\omega_j(y) \dx\dy\\
&=-\sum_{i=1}^n \mathcal{E}(|\omega_i|^*)+\int_{\Omega^2} G(x,y)\omega(x)\omega(y)\dx\dy
\end{align*}
is a conserved quantity and it is therefore enough to understand the evolution of \begin{align*}\int_{\Omega^2}\sum_{i=1}^n \gamma(x,y)\omega_i(x)\omega_i(y)+\sum_{i\neq j} G(x,y)\omega_i(x)\omega_j(y) \dx\dy,\end{align*} which is almost the energy of the point vortex system.

For technical reasons, it is easier to understand the derivative of the energy of the point vortices at $X_i$. We therefore set \begin{align}
\tilde{\mathcal{D}}:=\sum_{i=1}^n a_i^2\gamma(X_i,X_i)+\sum_{i\neq j} a_ia_j G(X_i,X_j)+\sum_{i=1}^n\E(|\omega_i|^*)-\int_{\Omega^2}G(x,y)\omega(x)\omega(y)\dx\dy.\label{def tilde D}
\end{align}
These two quantities are almost equivalent by the following Lemma.

\begin{lemma}\label{lem d}
Assume \eqref{A1}-\eqref{A7} and \eqref{control cmass}. Then it holds that \begin{align}
\left|\mathcal{D}-\tilde{\mathcal{D}}\right|\lesssim \eps^2\mathcal{D}^\frac{1}{2}+M_2.\label{est dif d}
\end{align}
In particular, it holds that \begin{align}
    \mathcal{D}&\lesssim \eps^4+\tilde{\mathcal{D}}\label{d eq1}\\
    \tilde{\mathcal{D}}&\lesssim \eps^4+\mathcal{D}.\label{d eq2}
\end{align}
\end{lemma}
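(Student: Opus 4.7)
Expanding $\tilde{\mathcal{D}}-\mathcal{D}$ with $\omega=\sum_i\omega_i$ and $G=-\frac{1}{2\pi}\log|\cdot|+\gamma$, the self-energies $\mathcal{E}(\omega_i)$ cancel and one obtains
\[
\tilde{\mathcal{D}}-\mathcal{D}=\sum_{i}\Bigl[a_i^2\gamma(X_i,X_i)-\int\int\gamma(x,y)\omega_i(x)\omega_i(y)\dx\dy\Bigr]+\sum_{i\neq j}\Bigl[a_ia_jG(X_i,X_j)-\int\int G(x,y)\omega_i(x)\omega_j(y)\dx\dy\Bigr].
\]
Setting $F_i(x):=\int\gamma(x,y)\omega_i(y)\dy$ and $H_j(x):=\int G(x,y)\omega_j(y)\dy$, the self-bracket factors as $a_i[a_i\gamma(X_i,X_i)-F_i(X_i)]-[\int F_i\omega_i-a_iF_i(X_i)]$, and similarly for the cross-bracket. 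Both $F_i$ and $H_j$ are harmonic on a neighborhood of $\supp\omega_i$---the first because $\gamma(x,\cdot)$ is harmonic on $\Omega$, the second because \eqref{A7} separates $\supp\omega_j$ from $\supp\omega_i$, eliminating the log singularity of $G$---with $C^2$ norms bounded independently of $\eps$.

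\textbf{Key lemma.} Everything reduces to proving that whenever $h$ is harmonic on a neighborhood of $\supp\omega_i$ with bounded $C^2$ norm,
\[
\Bigl|\int h\omega_i-a_ih(X_i)\Bigr|\lesssim\eps^2\sqrt{\mathcal{D}_i}+M_2^{(i)},
\]
where $\mathcal{D}_i:=\mathcal{E}(\omega_i^*)-\mathcal{E}(\omega_i)$ and $M_2^{(i)}$ is the $i$-th summand in \eqref{def M}. To prove this I would apply Theorem \ref{T2} to $|\omega_i|$, obtaining a splitting $\omega_i=\omega_i^c+\omega_i^f$ with $\omega_i^c$ supported in a ball of radius $O(\eps)$ about its centroid $y_0$, $\int|\omega_i^f|\lesssim\mathcal{D}_i$, and $W_2^2(\omega_i^c,\rho_i)\lesssim\eps^2\mathcal{D}_i$, where $\rho_i$ is the radial rearrangement of $\omega_i^c$ about $y_0$. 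Since $h$ is harmonic on this ball (which lies in $\Omega$ thanks to $b>0$), the mean-value property gives $\int h\rho_i=\tilde{a}_ih(y_0)$ with $\tilde{a}_i:=\int\omega_i^c$. For the discrepancy $\int h(\omega_i^c-\rho_i)$ I would subtract the affine part of $h$ at $y_0$ (which integrates to zero against $\omega_i^c-\rho_i$ by equality of mass and centroid) and bound the quadratic remainder $r$ via the optimal transport $T$ realizing $W_2(\omega_i^c,\rho_i)$: since $|\nabla r|\lesssim\eps$ on the $O(\eps)$ support, Cauchy-Schwarz yields $|\int r(\omega_i^c-\rho_i)|=|\int[r-r\circ T]\omega_i^c|\lesssim\eps\int|x-T(x)|\omega_i^c\lesssim\eps^2\sqrt{\mathcal{D}_i}$.

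\textbf{Cancellation and conclusion.} The remaining piece rewrites, using $a_i-\tilde{a}_i=\int\omega_i^f$, as
\[
\tilde{a}_ih(y_0)-a_ih(X_i)+\int h\omega_i^f=\tilde{a}_i(h(y_0)-h(X_i))+\int(h(x)-h(X_i))\omega_i^f\dx.
\]
Here the key algebraic identity $\tilde{a}_i(y_0-X_i)=-\int(x-X_i)\omega_i^f\dx$, forced by the defining equations of the two centers of mass, ensures that the first-order Taylor expansions at $X_i$ cancel exactly, leaving only second-order remainders bounded by $\int|x-X_i|^2\omega_i^f\dx+\tilde{a}_i|y_0-X_i|^2$. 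The first is $\lesssim M_2^{(i)}$ because $\supp\omega_i^f\subset\{d_i\gtrsim\eps\}$ where $\eta_\eps(d_i)\equiv 1$ and $d_i\approx|x-X_i|$ by Lemma \ref{d is nice}; the second is $\lesssim\mathcal{D}_iM_2^{(i)}/\tilde{a}_i^2\lesssim M_2^{(i)}$ via Cauchy-Schwarz and the smallness $\mathcal{D}_i\leq C_1$ from \eqref{control cmass}. Applying the lemma to the four harmonic profiles $\gamma(X_i,\cdot),F_i,G(X_i,\cdot),H_j$ and summing (with $\sum_i\sqrt{\mathcal{D}_i}\lesssim\sqrt{\mathcal{D}}$ by Cauchy-Schwarz) yields \eqref{est dif d}. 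The consequences \eqref{d eq1} and \eqref{d eq2} then follow by AM-GM, $\eps^2\sqrt{\mathcal{D}}\leq\mathcal{D}/2+C\eps^4/2$, absorbing the subquadratic term into the left-hand side.

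\textbf{Main obstacle.} The delicate point is the geometric compatibility in the paragraph above: Theorem \ref{T2} only guarantees that $\supp\omega_i^f$ avoids $B_{10D}(y_0)$ with $D=\diam\supp\omega_i^*$, and the lower bound $D\gtrsim\eps\sqrt{|a_i|/N_2}$ from the $L^\infty$-constraint \eqref{A4} need not exceed the $80N_1\eps$ threshold in the cutoff $\eta_\eps$. Either the constants in $\eta_\eps$ have to be tuned to match Theorem \ref{T2}'s geometry, or one must redefine the splitting so that $\omega_i^f$ is precisely the $\eta_\eps$-far part of $\omega_i$ and verify that Theorem \ref{T2}'s $W_2$ estimate transfers to the modified $\omega_i^c$; this bootstrap against the small parameter $|y_0-X_i|$ is the main bookkeeping effort.
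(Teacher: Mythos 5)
Your route is essentially the paper's: the paper proves \eqref{est dif d} by applying its Lemma \ref{lem mean value} b) twice to each self- and cross-interaction term, and your ``key lemma'' is exactly that statement, re-derived inline via Theorem \ref{T2}, the mean value property, and a $W_1\leq W_2$ transport bound; your exact first-order cancellation between the two centroids is a slightly different bookkeeping than the paper's use of \eqref{dist c mass} together with linearization at $X_i$, but it is the same mechanism.

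Two small points need repair, neither of which is the ``main obstacle'' you describe. First, the geometric compatibility issue dissolves without any tuning of constants: you should not claim $\int|x-X_i|^2|\omega_i^f|\lesssim M_2^{(i)}$, since $\omega_i^f$ may indeed live where $\eta_\eps(d_i)<1$; instead, on that region $d_i\lesssim\eps$, so its contribution is $\lesssim\eps^2\|\omega_i^f\|_{L^1}\lesssim\eps^2\mathcal{D}$ by \eqref{est far part}, which is admissible because $\mathcal{D}\lesssim 1$ gives $\eps^2\mathcal{D}\lesssim\eps^2\mathcal{D}^{1/2}$ (this is precisely the paper's estimate \eqref{est momentum}, and the same correction propagates harmlessly through your bound on $\tilde a_i|y_0-X_i|^2$). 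Second, AM--GM on $\eps^2\mathcal{D}^{1/2}$ alone does not yield \eqref{d eq1}--\eqref{d eq2}: from \eqref{est dif d} you still have the term $M_2$ on the right, and to absorb it you must also invoke $M_2\lesssim C_1\,\mathcal{D}$ (only $\omega_i^f$ contributes to \eqref{def M}, $\|\omega_i^f\|_{L^1}\lesssim\mathcal{D}$ by \eqref{est far part}, and $d_i$ is small on $\supp\omega_i$ by \eqref{control cmass}), choosing $C_1$ small enough; this is the step the paper makes explicit. With these two fixes your argument is complete and equivalent to the paper's.
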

This Lemma is proven below in Section \ref{sec proof D}.
Its proof crucially relies on Lemma \ref{lem mean value} for the velocities, which in turn is the part that uses Theorem \ref{T2}.

We then have the following differential estimates, whose proof roughly follows the aforementioned scheme by Gamblin, Iftimie, and  Sideris from \cite{iftimie1999evolution}. \begin{lemma}
Under the assumptions \eqref{A1}-\eqref{A7} and \eqref{control cmass}, it holds that \begin{align}
\tilde{\mathcal{D}}(t)'&\lesssim \eps^2\mathcal{D}(t)^\frac{1}{2}+M_2(t)\label{d est d}\\
M_k(t)'&\lesssim kS(t)^2M_k(t)+k\eps^2\mathcal{D}(t)^\frac{1}{2}M_{k-4}(t)+k\mathcal{D}(t)M_{k-2}(t)\notag\\
&\quad+C^k\left(\eps^{k+2}+\eps^{k-2}\mathcal{D}(t)^\frac{1}{2}+\eps^{k-2-\frac{1}{100}}\mathcal{D}(t)+M_1(t)\eps^{k-3}\right)\mathcal{D}(t)\mkern60mu\mathrm{for}\mkern7mu k\geq 4\notag\\
&\quad+k^2\mathcal{D}M_{k-2}(t)|\log\eps|^{-1}+k\eps^{1-\frac{1}{100}}\mathcal{D}(t)M_{k-3}+k^2\mathcal{D}(t)\eps^{2-\frac{4}{100}}M_{k-4}(t)\label{d est m}\\
S(t)'&\lesssim S(t)^3+\eps^2\mathcal{D}(t)^\frac{1}{2}S(t)^{-3}+\mathcal{D}(t)S(t)^{-1}+\eps^{-1} C^kS(t)^{-\frac{k}{2}}M_k(t)^\frac{1}{2}\mkern58mu\mathrm{for}\mkern7mu k>0\label{d est s}
\end{align}
\end{lemma}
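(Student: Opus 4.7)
The plan is to compute each of the three time derivatives directly by differentiating under the integral, using the transport equation $\partial_t \omega_i = -(u\cdot\nabla)\omega_i$ together with $\nabla\cdot u=0$, and to exploit systematically the orthogonality identity \eqref{d ort}, namely $\nabla_x f(d_i(x))\perp u^p(x)-u_i^p(X_i)$. This orthogonality ensures that the dominant ``rotational'' part of the advection velocity produces no change in functionals of $d_i$, so only the error velocity $u-u^p$ and the errors $\dot X_j - u_j^p(X_j)$ in the trajectories of the centers of mass contribute. These errors will be controlled by the forthcoming Lemma~\ref{lem mean value}, which is precisely where Theorem~\ref{T2} (via a mean value principle for harmonic functions) enters the argument.

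For \eqref{d est d}, the $\mathcal{E}(\omega_i^*)$ terms in $\tilde{\mathcal{D}}$ are invariant under the Euler flow (the distribution function of each $\omega_i$ is preserved) and $\int G\omega\omega\,\dx\dy$ is twice the total kinetic energy, hence conserved. Differentiating what remains and using the symmetry $G(x,y)=G(y,x)$ together with the definitions \eqref{def up}--\eqref{def uip}, one finds $\tilde{\mathcal{D}}' = -2\sum_i a_i\bigl(\dot X_i - u_i^p(X_i)\bigr)\cdot \bigl(u_i^p(X_i)\bigr)^\perp$, since $u_i^p(X_i)\cdot(u_i^p(X_i))^\perp=0$. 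As $|u_i^p(X_i)|$ is uniformly bounded under \eqref{A7}, this reduces to the velocity estimate $|\dot X_i - u_i^p(X_i)|\lesssim \eps^2\mathcal{D}^{1/2}+M_2$, which is the content of Lemma~\ref{lem mean value}.

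For \eqref{d est m}, after integrating by parts using $\nabla\cdot u=0$, $M_k' = \sum_i\int |\omega_i|\bigl(u(x)\cdot\nabla_x + \sum_j\dot X_j\cdot\nabla_{X_j}\bigr)[\eta_\eps(d_i)d_i^k]\dx$. Decompose $u(x) = u_i^p(X_i) + (u^p(x)-u_i^p(X_i)) + (u-u^p)(x)$. By \eqref{d ort} the middle term drops out. The constant $u_i^p(X_i)$ piece combines with the $j=i$ derivative through \eqref{d est3}, which gives $\nabla_{X_i}d_i+\nabla_x d_i=O(d_i^3)$, to yield the error $(\dot X_i - u_i^p(X_i))\cdot \nabla_x d_i$ plus a cubic-in-$d_i$ remainder; the $j\neq i$ contributions are $O(d_i^3)$ directly by \eqref{d est2}. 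These cubic remainders, multiplied by $|\eta_\eps'(d_i)d_i^k + k\eta_\eps(d_i)d_i^{k-1}|\lesssim k d_i^{k-1}$ and combined with the bound $d_i\leq S$ on $\supp\omega_i\cap\{\eta_\eps>0\}$, produce the leading $kS^2 M_k$ term. The genuine error $(u-u^p)\cdot \nabla_x[\eta_\eps(d_i)d_i^k]$ is then split according to the compact/far decomposition of Theorem~\ref{T2}: the compact part of each $\omega_j$ is controlled by Taylor-expanding $u^p$ via the mean value property for harmonic functions, producing the $\eps^2\mathcal{D}^{1/2}$ and $\mathcal{D}$ terms, while the far-field part is handled by H\"older interpolation, producing the $\eps^{-1/100}$ losses and the mixed $M_1\eps^{k-3}\mathcal{D}$ term.

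For \eqref{d est s}, let $x(t)\in\supp\omega_i$ be a Lagrangian particle achieving $d_i(x(t))=S(t)$ above the $40N_1\eps$ floor. Then $S'(t) = \nabla_x d_i\cdot u(x) + \sum_j \dot X_j\cdot \nabla_{X_j}d_i$, and the same orthogonality-plus-decomposition strategy gives $S'\lesssim S^3 + (\text{velocity error})$, where the $S^{-3}$ and $S^{-1}$ prefactors reflect the fact that the velocity error bounds from Lemma~\ref{lem mean value} sharpen near $X_i$ via the mean value property, while the $M_k^{1/2}S^{-k/2}$ term with the $\eps^{-1}$ prefactor arises from converting the integrated moment $M_k$ into a pointwise velocity bound via a Chebyshev-type argument combined with \eqref{A4}. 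The main obstacle throughout is the delicate bookkeeping in \eqref{d est m}: each distinct piece of $u-u^p$ (self-induced non-radial part of $\omega_i$ versus mutual effects from each other $\omega_j$) requires a separate estimation strategy, and the near-critical losses $\eps^{\pm 1/100}$ reflect H\"older interpolation between the $L^1$ and $L^\infty$ bounds of Lemma~\ref{lem mean value}.
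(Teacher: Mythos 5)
Your overall architecture matches the paper: the $\mathcal{E}(\omega_i^*)$ and total-energy terms are conserved, $\tilde{\mathcal{D}}'$ reduces via the Hamiltonian structure to a bounded factor times $\sum_i|\dot X_i-u_i^p(X_i)|$ and then to Proposition \ref{P5}; for $M_k$ and $S$ you use the orthogonality \eqref{d ort} to kill $u^p(x)-u_i^p(X_i)$, the estimates of Lemma \ref{d is nice} for the $\mathrm{D}_{X_j}$-terms (giving $kS^2M_k$, resp.\ $S^3$), and Lemma \ref{lem mean value}/Corollary \ref{cor u bd 1} for the coherent part $\omega_i^c+\sum_{j\neq i}\omega_j$ of the velocity error; the $\eps^{-1}C^kS^{-k/2}M_k^{1/2}$ term in \eqref{d est s} via the $L^1$--$L^\infty$ interpolation of the Biot--Savart kernel is also as in the paper. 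Your claims for \eqref{d est d} and \eqref{d est s} are essentially the paper's proofs.

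There is, however, a genuine gap in \eqref{d est m}: the self-interaction of the far field, i.e.\ the term $\int\!\!\int\big(\tfrac{(x-y)^\perp}{2\pi|x-y|^2}-\nabla^\perp\gamma(x,y)\big)\cdot \mathrm{D}_xh_i^k(x)\,\omega_i^f(x)\omega_i^f(y)\dx\dy$, cannot be ``handled by H\"older interpolation.'' Interpolating the kernel against $\omega_i^f$ gives at best $\big(kM_{k-1}+C^k\eps^{k-1}\mathcal{D}\big)\norm{\omega_i^f}_{L^1}^{1/2}\norm{\omega_i^f}_{L^\infty}^{1/2}\lesssim kM_{k-1}\mathcal{D}^{1/2}\eps^{-1}$, which exceeds the admissible terms (e.g.\ $k\mathcal{D}M_{k-2}$) by a factor of order $S\,\eps^{-1}\mathcal{D}^{-1/2}\gg 1$, and would destroy the long timescales in Proposition \ref{eff est}. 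The paper instead exploits the antisymmetry of the planar Biot--Savart kernel to replace $\mathrm{D}_xh_i^k(x)$ by the symmetrized difference $\mathrm{D}_xh_i^k(x)-\mathrm{D}_xh_i^k(y)$, controls it via the second-derivative/convexity bound \eqref{bd 2deri h} and the near-orthogonality \eqref{ort deri est} of $\mathrm{D}_xd_i$ to $(x-X_i)^\perp$, and splits $\Omega^2$ into three regions at the intermediate scales $\eps^{1-\frac{1}{100}}$, $\eps^{1-\frac{2}{100}}$; this is exactly where the gains $|\log\eps|^{-1}$, $\eps^{1-\frac{1}{100}}$, $\eps^{2-\frac{4}{100}}$ in \eqref{d est m} come from. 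Relatedly, your attribution is off: the $M_1\eps^{k-3}\mathcal{D}$ term (and $k\eps^2\mathcal{D}^{1/2}M_{k-4}$, $k\mathcal{D}M_{k-2}$) arises from the Corollary \ref{cor u bd 1} estimate of the coherent part, not from the far field, so the hardest term in the lemma is left unproved by your argument as written.
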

These estimates are proven in the Subsections  \ref{sec proof D}, \ref{sec proof m}, and \ref{sec proof S}. We will solve this system of differential inequalities in Section \ref{Gronwallsec}, which will yield the following bounds.

\begin{proposition} \label{eff est}
If the assumptions \eqref{A1}-\eqref{A6} on the initial data are fulfilled, then there exists a time $T$ such that \begin{align}
T\gtrsim \begin{cases}\frac{\eps^{-1}}{|\log\eps|^\frac{1}{2}} \quad&\text{ for $\beta > 2$}\\
\frac{\eps^{-1}}{|\log\eps|^\frac{2}{3}} \quad &\text{ if $\beta=2$}\\
\frac{\eps^{-\frac{\beta}{2}}}{|\log\eps|^\frac{1}{2}}\quad &\text{ for $\beta\in [\frac{4}{5},2)$}\\
\eps^{-(3\beta-2)}\quad &\text{ for $\beta\in (\frac{2}{3},\frac{4}{5})$}
\end{cases}\label{bd t}
\end{align} 
and such that for each $t\in [0,T]$, \eqref{A7} is violated before the time $t$ \textbf{or} for $k\in [4,|\log\eps|]$, it holds that
\begin{align}
\mathcal{D}(t)&\lesssim \eps^4(1+t)^2+\eps^{\beta}\label{bd d}\\
M_k&\lesssim  \begin{cases} C^k\left(\left(\eps^k(1+t)^\frac{k}{2}+\eps^{k(\frac{1}{2}+\frac{\beta}{8})}(1+t)^\frac{k}{4}\right)\left(\eps^4(1+t)^2+\eps^\beta\right)+\eps^{k-2+\frac{3}{2}\beta}(1+t)\right)&\text{ for $\beta\geq 2$}\\
C^k\left(\eps^{\beta(\frac{k}{2}+1)}(1+t)^\frac{k}{2}+\eps^{k(\frac{1}{2}+\frac{\beta}{8})+\beta}(1+t)^\frac{k}{4}+\eps^{k-3+2\beta}(1+t)\right)&\text{ for $\beta\leq 2$}\end{cases}\label{bd m}\\
S(t)&\lesssim \eps^{\min(1,\frac{\beta}{2})} (1+t)^\frac{1}{2}+\eps^{\frac{1}{2}+\frac{\beta}{8}}(1+t)^\frac{1}{4},\label{bd s}
\end{align}
and \eqref{control cmass} is fulfilled for $t'\in [0,t)$.
\end{proposition}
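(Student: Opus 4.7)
The strategy would be a bootstrap continuity argument. I would introduce $T^*$ as the supremum of times on which the three bounds \eqref{bd d}--\eqref{bd s} together with the auxiliary assumption \eqref{control cmass} all hold with implicit constants enlarged by a factor $2$, and then show that on $[0,T^*)$ the bounds in fact propagate with the original constants, which forces $T^*$ to be at least the quantity on the right-hand side of \eqref{bd t}. At $t=0$ the initial assumptions \eqref{A2}, \eqref{A4} and \eqref{A6} give $S(0)\leq 40N_1\eps$, $M_k(0)\lesssim C^k\eps^k$ and $\mathcal{D}(0)\lesssim\eps^\beta$, all comfortably inside the bootstrap window for $\eps$ small, so $T^*>0$; by continuity in $t$ at least one of the bounds is saturated at $T^*$, and the task reduces to closing the three differential inequalities simultaneously on $[0,T^*)$.

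First I would handle $\mathcal{D}$: using Lemma \ref{lem d} to freely identify $\mathcal{D}$ with $\tilde{\mathcal{D}}$ up to an $\eps^4$-error, direct integration of \eqref{d est d} gives $\tilde{\mathcal{D}}(t)\lesssim\eps^\beta+\eps^2\int_0^t\mathcal{D}(s)^{1/2}\,ds+\int_0^tM_2(s)\,ds$, into which one substitutes the bootstrap ansatz. The heart of the proof is the $M_k$-equation \eqref{d est m}. Writing it as $M_k'\leq CkS^2 M_k+F_k(t)$, with all the remaining terms collected into the forcing $F_k$, the corresponding integral representation yields
\[
M_k(t)\leq \exp\!\Big(Ck\!\int_0^t\!S(s)^2\,ds\Big)\Big(M_k(0)+\int_0^tF_k(s)\,ds\Big).
\]
Under the bootstrap for $S$ one has $\int_0^t S^2\,ds\lesssim\eps^{\min(2,\beta)}(1+t)^2+\eps^{1+\beta/4}(1+t)^{3/2}$, and demanding that this quantity times $k$ remain of order $1$ for $k$ as large as $|\log\eps|$ is precisely what selects the time scale in \eqref{bd t}: for $\beta>2$ the constraint $|\log\eps|\,\eps^2 T^2\lesssim 1$ gives $T\lesssim\eps^{-1}|\log\eps|^{-1/2}$, and the other three regimes arise by balancing $|\log\eps|\,\eps^{\min(2,\beta)}T^2$ against $|\log\eps|\,\eps^{1+\beta/4}T^{3/2}$ or against the polynomial growth of $\int F_k$. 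Once the exponential prefactor is under control, every term in $F_k$ is estimated by replacing $\mathcal{D}$, $M_{k-2}$, $M_{k-4}$ by their bootstrap bounds and integrating in time, giving \eqref{bd m}; the term $M_1\eps^{k-3}\mathcal{D}$ in particular is what produces the $\eps^{k-3+2\beta}(1+t)$ and $\eps^{k-2+3\beta/2}(1+t)$ tails.

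Finally, rewriting \eqref{d est s} as
\[
(S^2)'\lesssim S^4+\eps^2\mathcal{D}^{1/2}S^{-2}+\mathcal{D}+\eps^{-1}C^kS^{2-k/2}M_k^{1/2},
\]
valid for every $k>0$, I would optimise by choosing $k=\lfloor|\log\eps|\rfloor$ so that the prefactor $\eps^{-1}C^k$ becomes innocuous; inserting the bound on $M_k$ just obtained, the dominant contribution is the one coming from $\int_0^t\mathcal{D}\,ds\lesssim\eps^4(1+t)^3+\eps^\beta(1+t)$, whose square root reproduces the two terms in \eqref{bd s}. The confinement part of \eqref{control cmass} then follows from $S\leq C_1^{1/2}$ on $[0,T^*)$, which is guaranteed by \eqref{bd s} and the chosen time horizon, and the bounds on $M_1/\eps$ and $\mathcal{D}$ are by-products of the previous integrations. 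The main obstacle is precisely the coupling that appears here: the exponential prefactor $\exp(Ck\int_0^tS^2)$ in the $M_k$-estimate forces $k\int S^2$ to remain bounded, whereas the need to optimise in $k$ up to $|\log\eps|$ in the $S$-equation (to tame the $\eps^{-1}C^k$ factor) pushes $k$ to be as large as possible; the interplay of these two opposing constraints in each $\beta$-regime is what singles out the four different time scales appearing in \eqref{bd t}.
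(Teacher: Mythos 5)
Your overall scheme is essentially the paper's: a continuity argument in which \eqref{control cmass} and the bounds are propagated by applying Gronwall to \eqref{d est d}, \eqref{d est m}, \eqref{d est s}, with the factor $\exp\bigl(Ck\int_0^t S^2\bigr)$ controlled by keeping $(1+t)|\log\eps|\,S(t)^2$ small and with the choice $k\approx|\log\eps|$ to neutralize the $\eps^{-1}C^k$ term in the $S$-inequality. The paper implements this through the stopping time \eqref{def T} rather than through doubled constants in the target bounds, but these two bookkeeping devices are interchangeable, and your identification of the constraint $T|\log\eps|\,S(T)^2\lesssim 1$ as the source of the time scales is correct for $\beta\geq\frac45$.

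There is, however, one genuine gap: your mechanism does not produce the time scale in the last line of \eqref{bd t}, and the step that does is missing. For $\beta\in(\tfrac23,\tfrac45)$ the constraint $k\int_0^T S^2\lesssim1$ (equivalently $T|\log\eps|\,S(T)^2\lesssim1$) would allow the longer time $\eps^{-\beta/2}|\log\eps|^{-1/2}$; the actual obstruction is the propagation of the $M_1/\eps$ part of \eqref{control cmass}, which you dismiss as a ``by-product'' but which is needed for every ingredient of the argument (Lemma \ref{lem d}, Lemma \ref{lem mean value}, Proposition \ref{P5} all assume \eqref{control cmass}). Since only $\omega_i^f$ contributes to $M_1$, one has $M_1\lesssim S\,\mathcal D$ by \eqref{est far part}, so the bootstrap bounds give $M_1/\eps\lesssim\eps^{\beta-1}\bigl(\eps^{\beta/2}(1+t)^{1/2}+\eps^{\frac12+\frac\beta8}(1+t)^{1/4}\bigr)$, and keeping this small forces $t\lesssim\eps^{-(3\beta-2)}$ when $\beta<\tfrac45$ (and is exactly what creates the threshold $\beta=\tfrac45$); this is how the paper obtains that regime, and without this check your bootstrap does not close there. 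Two smaller inaccuracies, harmless for the outcome: the term $\eps^{\frac12+\frac\beta8}(1+t)^{1/4}$ in \eqref{bd s} comes from the $\eps^2\mathcal D^{1/2}S^{-3}$ term (integrate $(S^4)'\lesssim\eps^4(1+t)+\eps^{2+\beta/2}+\dots$), not from $\int_0^t\mathcal D\,\mathrm ds$; and multiplying \eqref{d est s} by $2S$ yields $\eps^{-1}C^kS^{1-k/2}M_k^{1/2}$, not $S^{2-k/2}$.
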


This Proposition immediately implies the confinement bound \eqref{thmb2} in the theorem, by the definition \eqref{def S} of $S$ and because $d_i(x)$ is equivalent to $|x-X_i|$ by \eqref{d est1}.

The convergence of the velocities in \eqref{thmb3} of $X_i$ follows from the bounds on $\mathcal{D}$ and $S$ above and the following Proposition.

\begin{proposition}\label{P5}
Assume \eqref{A1}-\eqref{A7} and \eqref{control cmass}, then we have the following estimates for the difference between the point vortex velocity and $u$: %\begin{itemize}
 It holds that \begin{align}
\left|u_i^p(X_i)-\frac{\mathrm{d}}{\mathrm{d}t}X_i\right|\lesssim\eps^2\mathcal{D}^\frac{1}{2}+M_2\lesssim \eps^2\mathcal{D}^\frac{1}{2}+S^2\mathcal{D}.\label{bd velo x}
\end{align}
%\item[b)] Suppose that $|x-X_i|\geq 10\eps$, then it holds that \begin{align}
  %  \left|\int_\Omega \nabla G^\perp(x,y)\left(\omega_i^c(y)+\sum_{j\neq i}\omega_j(y)\right) \dy-u^p(x)\right|\lesssim \eps^2\mathcal{D}^\frac{1}{2}|x-X_i|^{-3}+\mathcal{D}|x-X_i|^{-1}+S^2\mathcal{D}|x-X_i|^{-2}.
%\end{align}
%\end{itemize}
\end{proposition}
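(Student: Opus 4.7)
The plan is to start from the identity $\tfrac{\mathrm{d}}{\mathrm{d}t}X_i=\tfrac{1}{a_i}\int_\Omega u(x)\omega_i(x)\dx$, which follows from the transport equation $\partial_t\omega_i+(u\cdot\nabla)\omega_i=0$ together with $\div u=0$ by testing its weak form against the coordinate functions. Expanding $u$ via the Green's function and splitting $\omega=\sum_j\omega_j$, the difference $\tfrac{\mathrm{d}}{\mathrm{d}t}X_i-u_i^p(X_i)$ decomposes into a self-contribution coming from the double integral against $\omega_i\otimes\omega_i$ and $n-1$ cross-contributions against $\omega_i\otimes\omega_j$ with $j\neq i$.

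For the self-contribution, the singular Biot--Savart kernel $\tfrac{(x-y)^\perp}{2\pi|x-y|^2}$ is antisymmetric in $(x,y)$ and drops out when paired against the symmetric product $\omega_i(x)\omega_i(y)$, leaving only the smooth regular part $\nabla^\perp_x\gamma(x,y)$. For each cross-contribution, the separation assumption \eqref{A7} together with \eqref{control cmass} guarantees that $\nabla^\perp_x G(x,y)$ is smooth, and in fact harmonic in each variable, on $\supp\omega_i\times\supp\omega_j$. In both cases the task reduces to comparing integrals of the form $\int\omega_i(x)h(x)\dx$ with $a_ih(X_i)$ for a function $h$ that is harmonic on a neighborhood of $\supp\omega_i$; for the self-term this comparison must be applied once in $x$ and once in $y$. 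This is exactly the content of Lemma \ref{lem mean value}, which I would invoke as a black box.

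The quantitative mean-value estimate behind Lemma \ref{lem mean value} proceeds by Taylor expansion of $h$ around $X_i$: the constant term reproduces $a_ih(X_i)$, the linear term vanishes because $X_i$ is the center of mass of $\omega_i$, and the quadratic term $\tfrac12\mathrm{D}^2h(X_i)[(x-X_i),(x-X_i)]$ is a traceless symmetric form since $h$ is harmonic, so only the \emph{anisotropic} part of the second-moment tensor of $\omega_i$ contributes. Splitting $\omega_i=\omega_i^c+\omega_i^f$ as in Theorem \ref{T2}, the far part contributes at most $\int|x-X_i|^2|\omega_i^f|\dx\lesssim M_2$, while the anisotropic second moment of the compact part is bounded by $\int|x-T(x)|\,|x+T(x)-2y_0|\,\omega_i^c\dx\lesssim W_2(\omega_i^c,(\omega_i^c)^*(\cdot-y_0))\cdot\eps\lesssim \eps^2\mathcal{D}^{1/2}$, where $y_0$ is the center of mass of $\omega_i^c$, $T$ is the optimal $W_2$-transport map to the radial rearrangement, the factor $\eps$ comes from $\diam\supp\omega_i^c\lesssim\eps$ by \eqref{A2} and \eqref{control cmass}, and the $W_2$-bound is \eqref{T22} of Theorem \ref{T2} with $R_0\sim\eps$. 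Cubic and higher Taylor remainders are of order $\eps^3$ and absorbed into these leading terms. Summing over the $n$ contributions yields the first inequality; the second, $M_2\lesssim S^2\mathcal{D}$, is immediate from $d_i(x)\leq S$ on $\supp\omega_i$ combined with $M_0\lesssim\mathcal{D}$, which follows from the far-mass bound \eqref{T24}. The main obstacle is extracting the extra factor of $\eps$ in $\eps^2\mathcal{D}^{1/2}$ beyond the naive Wasserstein--Lipschitz bound $\eps\mathcal{D}^{1/2}$: this requires \emph{simultaneously} using the vanishing of the first-order Taylor term and the tracelessness of $\mathrm{D}^2 h$, so that the leading defect factorizes bilinearly as $\Vert x-T(x)\Vert\cdot\Vert x+T(x)-2y_0\Vert$, gaining one power of $\eps$ from each factor.
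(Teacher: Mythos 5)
Your proposal is correct and follows essentially the same route as the paper: the identity $\frac{\mathrm{d}}{\mathrm{d}t}X_i=\frac{1}{a_i}\int_\Omega u\,\omega_i\dx$, the splitting into the self-term (where the antisymmetry of the planar Biot--Savart kernel removes the singular part, leaving $\nabla^\perp\gamma$) and the cross-terms with harmonic $\nabla^\perp G$, the black-box application of Lemma \ref{lem mean value} in each variable, and $M_2\lesssim S^2\mathcal{D}$ from the far-mass bound. Your optional sketch of the lemma's interior is a harmless variant (transport-map factorization of the anisotropic second moment instead of the paper's $W_1\leq W_2$ bound on the Taylor remainder), though it glosses over the offset between $X_i$ and the center of mass $\tilde X_i$ of $\omega_i^c$, which the paper controls via \eqref{dist c mass} and $M_1^2\lesssim M_2$; since you invoke the lemma as a black box, this does not affect the proof of the proposition.
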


We note for future reference that \eqref{bd velo x} together with \eqref{control cmass} and the definition \eqref{def uip} of the $u_i^p$ also implies that \begin{align}
\left|\frac{\mathrm{d}}{\mathrm{d}t}X_i\right|\lesssim 1\quad\text{for all $i$.}\label{dxi}
\end{align}
Theorem \ref{T1'} will follow from the estimates on the energy, it is shown in Subsection \ref{sec38}.

\subsection{ Proof of Lemma \ref{d is nice} and further Preparations}
\begin{proof}[Proof of the Lemma]
Simply expanding the definition and using \eqref{ref green} yields \begin{align}
    d_i(x)=|x-X_i|e^{g(x)}\label{d 1st id}
\end{align}
where \begin{align*}
    g(x):=&-2\pi\big(\gamma(x,X_i)-\gamma(X_i,X_i)-\nabla\gamma(X_i,X_i)\cdot(x-X_i)\big)\\
    &\quad+\frac{-2\pi}{a_i}\sum_{j\neq i} a_j\big(G(x,X_j)-G(X_i,X_j)-\nabla G(X_i,X_j)\cdot(x-X_i)\big)\,.
\end{align*}

\noindent This function $g$ has a double zero at $x=X_i$ and is smooth by the assumption that the $X_j$'s stay away from the boundary and each other (see \eqref{A7}). Therefore, for $|x-X_i|$ sufficiently small \begin{equation*}
\left|e^{g(x)}-1\right|\lesssim |x-X_i|^2,
\end{equation*}
which, together with \eqref{d 1st id} gives \eqref{d est1}.
The same bound also holds for the derivatives with respect to $X_j$ for $j\neq i$ by smoothness, yielding \eqref{d est2}. The bound \eqref{d deri 1} directly follows directly from the fact that everything is smooth as long as the $X_i$ stay away from each other and the boundary.

To see the estimates \eqref{d est3}-\eqref{2nd deri d}, we can rewrite $g$ as a function of the variables $\bar{X}:=x-X_i$, $\tilde{X}=x+X_i$ and the $X_j$ (for $j\neq i$). Then $g$ and $\de_{\bar{X}}g$ both vanish at $\bar{X}=0$ and the same holds for the derivatives of these with respect to the other variables. We can then estimate \begin{align}
|\de_{X_j} d_i(x)|= |x-X_i|e^{g(x)}|\de_{X_j} g(x)|\lesssim |\bar{X}|^3=|x-X_i|^3,
\end{align}
where we have used Taylor's theorem to estimate $|\de_{X_j} g(x)|\lesssim |\bar{X}|^2$. Similarly one obtains \eqref{d est3}, using the fact that $\de_x+\de_{X_i}$ does not act on $|x-X_i|$. \eqref{ort deri est} uses that $\scalar{\de_x |x-X_i|}{(x-X_i)^\perp}=0$. \eqref{2nd deri d} follows straightforwardly from the product and chain rules.
\end{proof}

In order to apply Theorem \ref{T2} well, we will need to relate the point $y_0$ from Theorem \ref{T2}, applied to $\omega_i$, and $X_i$.

For $i$ such that $\omega_i\geq 0$, let $\tilde{X}_i,\, \omega_i^c,\,\omega_i^f$ denote a triple for which the properties \eqref{T21}, \eqref{T22} and \eqref{T24} in Theorem \ref{T2} hold for $y_0=\tilde{X}_i$, $\omega_i^c=\rho^c$, $\omega_i^f=\rho^f$ and $\rho=\omega_i$ (extended by $0$ to function on $\R^2$ if $\Omega\neq \R^2$). For $i$ such that $\omega_i\leq 0$, let $\tilde{X}_i,\, \omega_i^c,\,\omega_i^f$ be such that these properties hold for $\rho=-\omega_i$, $\omega_i^c=-\rho^c$, $\omega_i^f=-\rho^f$ and $y=\tilde{X}_i$.

We first note that it holds that $R_0\approx\diam\supp\omega_i^*\approx \eps$ and hence the constants depending on $\frac{R_0}{\diam\supp\rho^*}$ in Theorem \ref{T2} are bounded independently of $\eps$.

%Let $\omega_i^c$ and $\omega_i^f$ denote the close and far parts of $\omega$ as in the theorem.

\begin{lemma}
Assume \eqref{control cmass}, then it holds that \begin{align}
|X_i-\tilde{X}_i|\lesssim \eps\mathcal{D} +M_1.\label{dist c mass}
\end{align}
\end{lemma}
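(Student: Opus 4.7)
The first observation is that $\tilde X_i$ is the center of mass of $\omega_i^c$ and that $\omega_i = \omega_i^c + \omega_i^f$ with disjoint supports. Combining these immediately gives the identity
\[
a_i(X_i - \tilde X_i) = \int \omega_i^f(x)\,(x - \tilde X_i)\,dx,
\]
so it suffices to bound the right-hand side by $\eps\mathcal D + M_1$. The plan is to split the region of integration into three zones based on the distance $|x - \tilde X_i|$: an inner zone of radius $\lesssim D/e$, an outer zone beyond a sufficiently large multiple of $\eps$, and the intermediate annulus in between (recall $D=\diam\supp\omega_i^*\lesssim\eps$).

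On the inner zone, the log weight in \eqref{T24} satisfies $|\log(|x - \tilde X_i|/D)| \geq 1$, and rearranging yields the pointwise bound $|x - \tilde X_i| \leq D\,|\log(|x - \tilde X_i|/D)|$; integrating and using $D \lesssim \eps$ gives a contribution of order $\eps\mathcal D$. On the outer zone, the a priori bound \eqref{control cmass} together with Lemma \ref{d is nice} ensures $\eta_\eps(d_i(x)) = 1$ and $|x - \tilde X_i| \lesssim d_i(x) + |X_i-\tilde X_i|$, whence that contribution is bounded by $M_1$ plus a term proportional to $|X_i - \tilde X_i|\cdot\int\omega_i^f\,dx$ that will be absorbed into the left-hand side.

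The main obstacle lies in the intermediate annulus $\{D/e \lesssim |x - \tilde X_i| \lesssim \eps\}$, where $|x - \tilde X_i| \lesssim \eps$ but the log weight in \eqref{T24} can vanish around $|x - \tilde X_i| = D$. Here the task reduces to showing that the mass of $\omega_i^f$ on this annulus is $\lesssim \mathcal D$. For this I would exploit the Wasserstein bound \eqref{T22}, which (since $(\omega_i^c)^*$ is supported in a ball of radius $\lesssim\eps$ around $\tilde X_i$) forces $\omega_i^c$ to have mass outside a ball of radius $\sim\eps$ around $\tilde X_i$ of order $\mathcal D$, combined with the structural description of $\omega_i^f$ from the proof of Theorem~\ref{T2} (namely that $\omega_i^f$ vanishes on a large ball $B_{3D}(x_0)$ with $|\tilde X_i - x_0| \leq 3D$), the $L^\infty$-bound on $\omega_i$, and the area bound $|\supp\omega_i|\lesssim\eps^2$ conserved by the transport.

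Once the annulus mass bound is in place, all three contributions combine to give $|\int \omega_i^f(x-\tilde X_i)\,dx| \lesssim \eps\mathcal D + M_1 + |X_i-\tilde X_i|\cdot (\mathcal D + M_1/\eps)$. Since $\mathcal D + M_1/\eps$ is small under \eqref{control cmass}, a standard absorption closes the estimate. The hard part will be the annulus step, where the log weight degenerates and the argument must genuinely combine the Wasserstein control from \eqref{T22} with the structural information from the proof of Theorem~\ref{T2}.
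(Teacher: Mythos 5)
Your reduction to bounding $\int \omega_i^f(x)(x-\tilde X_i)\,dx$ is correct, and the inner and outer zones are handled by sound arguments (the pointwise inequality $|x-\tilde X_i|\leq D\,|\log(|x-\tilde X_i|/D)|$ on $\{|x-\tilde X_i|\leq D/e\}$ together with \eqref{T24}, and a Chebyshev-type bound by $M_1$ plus absorption outside). But the step you yourself flag as the hard part — that the mass of $\omega_i^f$ on the annulus $\{D/e\lesssim |x-\tilde X_i|\lesssim \eps\}$ is $\lesssim\mathcal D$ — is a genuine gap, and the ingredients you propose do not close it. The Wasserstein bound \eqref{T22} constrains only $\omega_i^c$, so it says nothing about the mass of $\omega_i^f$ in this annulus. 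The support fact you quote from the proof of Theorem \ref{T2} ($\omega_i^f$ vanishes on $B_{3D}(x_0)$ with $|\tilde X_i-x_0|\leq 3D$) is too weak: with only $|\tilde X_i-x_0|\leq 3D$, the support of $\omega_i^f$ may still meet the sphere $\{|x-\tilde X_i|=D\}$ where the weight in \eqref{T24} vanishes. And the $L^\infty$-bound \eqref{A4} combined with the conserved support area only gives an $O(1)$ mass bound on the annulus, not $O(\mathcal D)$. Note also that your outer-zone claim that \eqref{control cmass} alone forces $\eta_\eps(d_i(x))=1$ there is not quite right without first knowing $|X_i-\tilde X_i|\lesssim\eps$, though this is fixable by absorption once the mass of $\omega_i^f$ is known to be small.

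The missing ingredient is precisely the far-field mass bound $\|\omega_i^f\|_{L^1}\lesssim\mathcal D$, which does not follow from the statement of Theorem \ref{T2} alone (because of the degenerate log weight) but does follow from its proof, namely \eqref{bd far part}, which shows that all of $\rho\,\mathds{1}_{B_{5D/2}(x_0)^c}$ — hence all of $\omega_i^f$ — has mass $\lesssim\mathcal D$; the paper records this later as \eqref{est far part}. With that fact in hand your three-zone decomposition becomes unnecessary: everything at distance $\lesssim\eps$ from $\tilde X_i$ contributes at most $\eps\,\|\omega_i^f\|_{L^1}\lesssim\eps\mathcal D$ directly. The paper's own proof is structured differently but relies on the same fact: it first pins down $|X_i-\tilde X_i|\leq 120N_1\eps$ by a contradiction argument using the smallness of $M_1/\eps$ from \eqref{control cmass} together with the lower mass bound $\int_{B_{20N_1\eps}(\tilde X_i)}|\omega_i|\geq\tfrac12|a_i|$, and then compares the two centers of mass, bounding the near contribution by $\eps\,\big|a_i-\int_{B_{20N_1\eps}(\tilde X_i)}\omega_i\big|\lesssim\eps\mathcal D$ (exactly the far-mass bound) and the far contribution by $M_1$. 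So your skeleton can be repaired, but only by importing \eqref{bd far part}/\eqref{est far part}, not by the tools you list.
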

\begin{proof}
First note that the assumption \eqref{control cmass} together with the definition of $M_1$ and the fact that $\norm{\omega_i\mathds{1}_{B_{20N_1\eps}(\tilde{X}_i)}}_{L^1}\geq \frac{1}{2}|a_i|$ (by \eqref{T24} and where the $20$ is due to switching from diameter to radius) implies that \begin{equation}
|\tilde{X}_i-X_i|\leq 120N_1\eps,\label{1st bd X} \end{equation}
otherwise we obtain a contradiction from noting that, if \eqref{1st bd X} is false, then $B_{20N_1\eps}(\tilde{X}_i)$ lies in the set where the cutoff function in the definition \eqref{def M} of $M_1$ is $1$ (by \eqref{d est1}) and therefore \begin{align*}
M_1\geq \int_{B_{20N_1\eps}(\tilde{X}_i)}d_i(x)|\omega_i(x)|\dd x\geq \frac{100N_1}{4}\eps|a_i| 
\end{align*}
which is impossible if $C_1$ is chosen sufficiently small.
Therefore we have that $B_{20N_1\eps}(\tilde{X_i})\subset B_{200N_1\eps}(X_i)$ and compute from the definition \begin{align*}
\mel|X_i-\tilde{X}_i|=\left|\frac{1}{|a_i|}\int_\Omega (x-X_i)\omega_i(x)\dd x- \frac{1}{\int_{B_{20N_1\eps}(\tilde{X}_i)}\omega_i(x)\dx}\int_{B_{20N_1\eps }(\tilde{X}_i)}\omega_i(x)(x-X_i)\dd x \right|\\
&\leq 200N_1\eps\int_{B_{200N_1\eps}(X_i)}\left|\frac{1}{a_i}-\frac{1}{\int_{B_{20N_1\eps}(\tilde{X}_i)}\omega_i(x)\dx}\right||\omega_i(x)|\dx\\
&\quad+\left|\frac{1}{a_i}\right|\int_{\Omega\backslash B_{200N_1\eps}(X_i)}|\omega_i(x)||x-X_i|\dx\\
&\lesssim \eps \left|a_i-\int_{B_{20N_1\eps}(\tilde{X}_i)}\omega_i(x)\dx\right|+M_1\lesssim \eps\mathcal{D}+M_1,
\end{align*}
where we have made use of the fact that $|x-X_i|\approx d_i(x)$ on the support of $\omega_i$ by \eqref{d est1} and the assumption \eqref{control cmass} and used Theorem \ref{T2} and the definition \eqref{def M} of $M_1$ in the last line.
\end{proof}

%We now define the "close" and "far" parts of $\omega_i$ as \begin{align}
%\omega_i^c:=\mathds{1}_{B_{10N_1\eps}(\tilde{X}_i)}\omega_i\quad \omega_i^f=\omega_i-\omega_i^c,\label{def cf}
%\end{align}
%
%which is exactly the splitting from Theorem \ref{T2}.

Let us collect some easy facts for further reference.
Note that the assumption \eqref{control cmass} and \eqref{dist c mass}  imply $\supp \omega_i^c\subset B_{21N_1\eps}(X_i)$ if $C_1$ is small enough. In particular, only $\omega_i^f$ contributes to $M_k$ and $S$. Furthermore, we may use the energy defect and Theorem \ref{T2} to bound \begin{align}
\int_{\Omega} |\omega_i^f|\log\left|\frac{x-X_i}{\eps}\right|\dx\lesssim \mathcal{D},\label{est log}
\end{align}
in particular \begin{align}
&\int_\Omega |\omega_i^f|\dx\lesssim \mathcal{D}\label{est far part}\\
&\int_\Omega d_i(x)^k|\omega_i^f|\dx\lesssim M_k+C^k\eps^k\mathcal{D}. \label{est momentum}
\end{align} 
for all $i$.

\subsection{Estimates on the velocity and proof of Proposition \texorpdfstring{\ref{P5}}{3.5}}
The Proposition is a special case of part b) of the following Lemma.

\begin{lemma}\label{lem mean value}\begin{itemize}
\item[a)] Let $|x-X_i|\in [25N_1\eps,C_1]$ and assume \eqref{A1}-\eqref{A7} and \eqref{control cmass},  then it holds that \begin{align*}
\left|\int_\Omega \nabla^\perp G(x,y) \omega_i^c(y)\dy-a_i\nabla^\perp G(x,X_i)\right|\lesssim \eps^2\mathcal{D}^\frac{1}{2}|x-X_i|^{-3}+\mathcal{D}|x-X_i|^{-1}+M_1|x-X_i|^{-2}.
\end{align*}

\item[b)] Let $c>0$ be given. Let $F:\Omega\rightarrow \R$ be harmonic in $B_c(X_i)$. Then it holds that  \begin{align*}
    \left|\int_\Omega  F(x)\omega_i(x)\dx-a_i F(X_i)\right|\lesssim_c \norm{F}_{C^2(B_c(X_i))}\left(\eps^2\mathcal{D}^\frac{1}{2}+M_2 \right),
\end{align*}
where the implicit constant does not depend on $F$.
\end{itemize}
\end{lemma}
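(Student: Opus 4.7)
The plan is to treat both parts via a common Taylor-plus-optimal-transport argument, combining the mean value property of harmonic functions applied to the rearrangement $(\omega_i^c)^*(\cdot - \tilde{X}_i)$ with the $W_2$-control from Theorem \ref{T2}, which gives $W_2(\omega_i^c,(\omega_i^c)^*(\cdot-\tilde{X}_i)) \lesssim \eps\mathcal{D}^{1/2}$ since $R_0\approx \eps$. For part (b), I decompose
\[
\int F \omega_i - a_i F(X_i) = \Bigl[\int F\omega_i^c - F(\tilde{X}_i)\norm{\omega_i^c}_{L^1}\Bigr] + \bigl(F(\tilde{X}_i) - F(X_i)\bigr)\norm{\omega_i^c}_{L^1} + \int(F - F(X_i))\omega_i^f,
\]
using the mean value property $\int F(\omega_i^c)^*(\cdot-\tilde{X}_i) = F(\tilde{X}_i)\norm{\omega_i^c}_{L^1}$, valid since $(\omega_i^c)^*$ is supported in a ball of radius $\approx\eps$ centred at $\tilde{X}_i$ well inside $B_c(X_i)$.

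For the first bracket I exploit the optimal $W_2$-plan together with the Taylor expansion $\nabla F(y) = \nabla F(\tilde{X}_i) + O(\norm{\nabla^2 F}_\infty|y-\tilde{X}_i|)$: the constant term integrates to zero because $\omega_i^c$ and $(\omega_i^c)^*(\cdot-\tilde{X}_i)$ share the centre of mass $\tilde{X}_i$, and Cauchy--Schwarz on the linear-in-$(y-\tilde X_i)$ remainder yields $\norm{\nabla^2 F}_\infty \cdot \eps \cdot W_2 \lesssim \norm{F}_{C^2}\eps^2\mathcal{D}^{1/2}$, while the quadratic Taylor remainder contributes $\norm{F}_{C^2}W_2^2 \lesssim \norm{F}_{C^2}\eps^2\mathcal{D}$. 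For the remaining two terms I Taylor expand to first order around $X_i$; the crucial cancellation is
\[
\nabla F(X_i)\cdot\Bigl[(\tilde{X}_i - X_i)\norm{\omega_i^c}_{L^1} + \int(x-X_i)\omega_i^f\Bigr] = 0,
\]
since $\int(x-X_i)\omega_i = 0$ (by the definition of $X_i$) forces $\int(x-X_i)\omega_i^f = -(\tilde{X}_i - X_i)\norm{\omega_i^c}_{L^1}$. The surviving quadratic remainders $\norm{F}_{C^2}|\tilde{X}_i - X_i|^2 \lesssim \norm{F}_{C^2}(M_1^2 + \eps^2\mathcal{D}^2)$ (from \eqref{dist c mass}) and $\norm{F}_{C^2}\int|x-X_i|^2\omega_i^f \lesssim \norm{F}_{C^2}(M_2 + \eps^2\mathcal{D})$ (from \eqref{est momentum}) absorb into $\norm{F}_{C^2}(\eps^2\mathcal{D}^{1/2} + M_2)$ via $M_1^2 \lesssim M_2$ (Cauchy--Schwarz in the definition of $M_1,M_2$) and $\mathcal{D}\lesssim 1$ from \eqref{control cmass}.

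Part (a) is the same argument specialised to $F(y) = \nabla^\perp_x G(x,y)$, which is harmonic in $y$ on $\Omega\setminus\{x\}$. Under $|x-X_i|\geq 25 N_1\eps$, interior estimates for harmonic functions give $\norm{\nabla^k_y F}_{L^\infty(B_c(X_i))}\lesssim |x-X_i|^{-k-1}$ for a suitable $c\approx \eps$ containing both supports. The three terms in the decomposition now produce, respectively, $\eps^2\mathcal{D}^{1/2}|x-X_i|^{-3}$ (transport bracket), $(M_1 + \eps\mathcal{D})|x-X_i|^{-2}$ (first-order expansion of $F(\tilde{X}_i) - F(X_i)$ together with \eqref{dist c mass}), and $\mathcal{D}|x-X_i|^{-1}$ (from replacing $\norm{\omega_i^c}_{L^1}$ by $a_i$ at the cost of $|F(X_i)|\cdot\norm{\omega_i^f}_{L^1} \lesssim \mathcal{D}/|x-X_i|$); the term $\eps\mathcal{D}|x-X_i|^{-2}$ absorbs into $\mathcal{D}|x-X_i|^{-1}$ because $\eps \leq |x-X_i|$.

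The delicate point is squeezing out $\eps^2\mathcal{D}^{1/2}$ rather than the crude Kantorovich--Rubinstein bound $\eps\mathcal{D}^{1/2}$ (which would follow from $W_1\leq W_2$ with $F$ merely Lipschitz). Gaining the extra factor of $\eps$ requires \emph{simultaneously} using the harmonicity of $F$ (to linearise $\nabla F$ around $\tilde{X}_i$) and the first-moment matching between $\omega_i^c$ and its symmetric rearrangement; without this, the right-hand side of part (b) would read $M_1$ in place of $M_2$, too weak to close the differential inequalities \eqref{d est d}--\eqref{d est s} driving the main theorem.
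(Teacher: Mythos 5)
Your proposal is correct and follows essentially the same route as the paper: mean value property applied to $(\omega_i^c)^*(\cdot-\tilde X_i)$, cancellation of constant and linear terms via mass and center-of-mass matching, the $W_2$-bound from Theorem \ref{T2} to gain the factor $\eps^2\mathcal{D}^{1/2}$, and \eqref{dist c mass}, \eqref{est far part}, \eqref{est momentum} with $M_1^2\lesssim M_2$ to close. The only difference is bookkeeping: you estimate the transport term with an explicit optimal plan and Taylor expansion, while the paper subtracts the linearization of $F$ (resp.\ $\nabla^\perp G(x,\cdot)$) at $\tilde X_i$ and pairs its Lipschitz norm with $W_1\leq W_2$; these are equivalent.
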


%Using that $\gamma$ is harmonic and smooth away from the boundary and $G$ is smooth and harmonic away from $\{x=y\}$, the following corollary of Lemma \ref{lem mean value} is immediate.

%For future reference we point out that a), together with the definition \eqref{} of $u_i^p$ and the assumption \eqref{} implies that \begin{equation}
%\left|\frac{\mathrm{d}}{\mathrm{d}t}X_i\right|\lesssim_{c_0}1 \quad \text{ for all $i$.}
%\end{equation}

\begin{proof}[Proof of the Proposition \ref{P5} using Lemma \ref{lem mean value}]
%b) is immediate using the Lemmata: It follows simply from adding using Lemma \ref{} and Lemma \ref{} (for $F=\nabla^\perp G$) together with the assumption that the distances of the vortices are $\gtrsim 1$.

We compute the time derivative of $X_i$ and see by partial integration and expansion of the Biot-Savart law that \begin{align*}
\mel\frac{\mathrm{d}}{\mathrm{d}t}X_i=\frac{1}{a_i}\int_\Omega u(x)\omega_i(x)\dx\\
&=\frac{1}{a_i}\int_\Omega\int_\Omega \left(\frac{(x-y)^\perp}{2\pi|x-y|^2}-\nabla^\perp \gamma(x,y)\right)\omega_i(x)\omega_i(y)\dx\dy\\
&\quad-\frac{1}{a_i}\sum_{j\neq i}\int_\Omega\int_\Omega \nabla^\perp G(x,y)\omega_i(x)\omega_j(y)\dx\dy\\
&=-\frac{1}{a_i}\int_\Omega\int_\Omega \nabla^\perp \gamma(x,y)\omega_i(x)\omega_i(y)\dx\dy-\frac{1}{a_i}\sum_{j\neq i}\int_\Omega\int_\Omega \nabla^\perp G(x,y)\omega_i(x)\omega_j(y)\dx\dy.
\end{align*}
Here, we have used the antisymmetry of the planar Biot-Savart law in the last step.
Observe that because of \eqref{control cmass} and \eqref{A7}, the functions $\omega_i$ and $\omega_j$ have disjoint supports with distances of order $1$. Therefore, we can apply Lemma \ref{lem mean value} b) to the second integral twice with $\nabla^\perp G(\cdot,y)$ resp. $\nabla^\perp G(x,\cdot)$ in place of $F$ and to the first with $\nabla^\perp\gamma$ in place of $F$ and obtain that \begin{align*}
   \mel\frac{\mathrm{d}}{\mathrm{d}t}X_i=-\frac{1}{a_i}\int_\Omega \nabla^\perp\gamma(x,X_i) \omega_i(x)a_i\dx-\frac{1}{a_i}\sum_{j\neq i}\int_\Omega \nabla^\perp G(X_i,y)a_i\omega_j(y)\dy+O\left(\eps^2\mathcal{D}^\frac{1}{2}+M_2\right)\\
   &=-\frac{1}{a_i}a_i^2\nabla^\perp\gamma(X_i,X_i)-\frac{1}{a_i}\sum_{j\neq i}a_ia_j\nabla^\perp G(X_i,X_j)+O\left(\eps^2\mathcal{D}^\frac{1}{2}+M_2\right),
\end{align*}
which yields the statement by the definition \eqref{def uip} of $u_i^p$.
\end{proof}

Before proving the lemma, we also note the following corollary.

\begin{corollary}\label{cor u bd 1}
 Suppose that $|x-X_i|\in [25N_1\eps, C_1]$ and that \eqref{A1}-\eqref{A7} and \eqref{control cmass} hold, then it holds that \begin{equation*}\begin{aligned}
    \mel\left|\int_\Omega \nabla G^\perp(x,y)\left(\omega_i^c(y)+\sum_{j\neq i}\omega_j(y)\right) \dy-u^p(x)\right|\\
    &\lesssim \eps^2\mathcal{D}^\frac{1}{2}|x-X_i|^{-3}+\mathcal{D}|x-X_i|^{-1}+M_1|x-X_i|^{-2}.
\end{aligned}\end{equation*}
\end{corollary}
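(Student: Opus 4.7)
The plan is to split the integrand by particle and apply Lemma \ref{lem mean value} to each piece. For the ``near'' part $\omega_i^c$, part (a) of Lemma \ref{lem mean value} applies verbatim (the range $|x-X_i|\in[25N_1\eps,C_1]$ matches its hypothesis) and gives exactly the three terms on the right-hand side of the stated bound, together with a leading contribution $a_i\nabla^\perp G(x,X_i)$.

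For each $j\neq i$, I would first verify that $y\mapsto \nabla^\perp_x G(x,y)$ is harmonic on a fixed ball $B_c(X_j)$ with $c\gtrsim b$. This follows from the identity $\Delta_y G(x,y)=0$ for $y\neq x$ (which is preserved by differentiation in $x$), combined with the separation hypothesis \eqref{A7} and the fact that $|x-X_j|\geq b-C_1\gtrsim b$ since $|x-X_i|\leq C_1$. The $C^2$-norm of $\nabla^\perp_x G(x,\cdot)$ on $B_c(X_j)$ is controlled by a constant depending only on $b,\Omega$. Part (b) of Lemma \ref{lem mean value}, applied componentwise to $F(y)=\nabla^\perp_x G(x,y)$ with centre $X_j$, then yields
\[
\left|\int_\Omega \nabla^\perp G(x,y)\omega_j(y)\dy - a_j\nabla^\perp G(x,X_j)\right|\lesssim \eps^2\mathcal{D}^{1/2}+M_2.
\]
Summing the $\omega_i^c$-estimate with these $n-1$ contributions and recognising $\sum_k a_k\nabla^\perp G(x,X_k)$ from the definition \eqref{def up} of $u^p$ produces the desired difference.

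The only remaining task is to absorb the error ``$\eps^2\mathcal{D}^{1/2}+M_2$'' from each of the finitely many $j\neq i$ terms into the three stated summands of the right-hand side. Since $|x-X_i|\leq C_1\lesssim 1$, one has $|x-X_i|^{-3},|x-X_i|^{-2}\gtrsim 1$, so $\eps^2\mathcal{D}^{1/2}\lesssim \eps^2\mathcal{D}^{1/2}|x-X_i|^{-3}$. For the $M_2$ contribution, using \eqref{control cmass} together with Lemma \ref{d is nice} one has $d_i\lesssim 1$ on $\supp\omega_i$, which gives $M_2\lesssim M_1$ and hence $M_2\lesssim M_1|x-X_i|^{-2}$. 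I do not expect any genuine obstacle here: the corollary is a clean bookkeeping step combining parts (a) and (b) of Lemma \ref{lem mean value}, and the only subtlety is checking that the bound coming from the $j\neq i$ interactions is subdominant to the one coming from $\omega_i^c$.
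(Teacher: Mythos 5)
Your proposal is correct and follows essentially the same route as the paper: part (a) of Lemma \ref{lem mean value} for $\omega_i^c$ and part (b) with $F=\nabla^\perp_x G(x,\cdot)$, harmonic near each $X_j$ by \eqref{A7} and \eqref{control cmass}, for the $j\neq i$ terms. The absorption of the resulting $\eps^2\mathcal{D}^{1/2}+M_2$ errors (using $|x-X_i|\lesssim 1$ and $M_2\lesssim M_1$ since $d_i\lesssim 1$ on the supports) is exactly the bookkeeping the paper leaves implicit, and you handle it correctly.
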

\begin{proof}
This follows directly from the Lemma by using part a) for $\omega_i^c$ and part b) for the $\omega_j$, where one uses the harmonicity of $\nabla^\perp G$ in a similar way as in the previous proof.
\end{proof}

\begin{proof}[Proof of Lemma \ref{lem mean value}]
 \begin{comment}We denote the center of mass of $\omega_i^c$ by \begin{align}
\tilde{X}_i=\frac{1}{\int_\Omega \omega_i^c\dy}\int_\Omega\omega_i^c y\dy
\end{align}
 and claim that \begin{align}
\left|\tilde{X}_i-X_i\right|\lesssim \eps\mathcal{D}+M_1,\label{dist c mass}
\end{align}
%
which is also $\leq (C_1+C_2)\eps$ by the Assumptions \eqref{control cmass} and \eqref{B4}.

We first note that by \eqref{}, it holds that \begin{align}
\left|\int_\Omega \omega_i^c\dx-a_i\right|\lesssim \mathcal{D}.\label{comp mass}
\end{align}
%
%
%
It holds that $\frac{1}{a_i}\int\omega_ix\dx =X_i$ and $\omega_i=\omega_i^c+\omega_i^f$ and therefore \begin{align}
    \left|\tilde{X}_i-X_i\right|\leq\frac{1}{\norm{\omega_i^c}_{L^1}}\int_\Omega \left(\left|1-\frac{\norm{\omega_i^c}_{L^1}}{|a_i|}\right||\omega_i^c|+\frac{\norm{\omega_i^c}_{L^1}}{|a_i|}|\omega_i^f|\right)|y-X_i|\dy\lesssim
%\left|1-\frac{\norm{\omega_i^c}_{L^1}}{a_i}\right|\eps+\int_{|x-X_i|\leq \eps^{1-\frac{\min(1,\beta)}{2}}} \omega_i^f(x)|x-X_i|\dx+\int_{|x-X_i|\geq \eps^{1-\frac{\min(1,\beta)}{2}}}\omega_i^f(x)|x-X_i|\dx
M_1+\eps\mathcal{D},  
\end{align}
%
%
here we have used that by \eqref{comp mass} it holds that $|1-\frac{\norm{\omega_i^c}_{L^1}}{|a_i|}|\lesssim \mathcal{D}$ and that $d_i(x)$ and $|x-X_i|$ are comparable by \eqref{}, and used the definition \eqref{} of $M_1$. 
\end{comment}

It is not restrictive to assume that $a_i\geq 0$, the other case follows by symmetry.

\textbf{a)} First observe that, because of \eqref{dist c mass}, and because of the assumption on $x$ we have that \begin{align}
|x-X_i|\approx |x-\tilde{X}_i|,\label{comp dist}
\end{align}
as we have $|\tilde{X}_i-X_i|< N_1\eps$ if $C_1$ in the assumption \eqref{control cmass} is sufficiently small.

Next, we note that, by the definition of $G$ and because $\gamma$ is smooth away from the boundary, it follows from \eqref{dist c mass} and the assumption on $x$ that \begin{align}
\left|\nabla^\perp G(x,X_i)-\nabla^\perp G(x,\tilde{X}_i)\right|\lesssim |X_i-x|^{-2}|X_i-\tilde{X}_i|\lesssim M_1|x-X_i|^{-2}+\mathcal{D}|x-X_i|^{-1}.\label{vel est1}
\end{align}
%where we have used that by assumptions \eqref{control cmass} and \eqref{B4} it holds that $|x-X_i|\geq 10|X_i-\tilde{X}_i|$, presuming that $C_1$ and $C_2$ are chosen small enough.

\noindent In the next step, we observe that $|\nabla G(x,\tilde{X}_i)|\approx |x-X_i|^{-1}$ by definition and because of \eqref{comp dist}. In particular, by the inequality \eqref{est far part}, we see that \begin{align}
\left|\int_\Omega \omega_i^c\dy\,\nabla^\perp G(x,\tilde{X}_i)-a_i\nabla^\perp G(x,\tilde{X}_i)\right|\lesssim \left|\int_\Omega \omega_i^c\dy-a_i\right||x-X_i|^{-1}\lesssim \mathcal{D}|x-X_i|^{-1}.\label{vel est2}
\end{align}
Let $(\omega_i^c)^*$ denote the symmetric decreasing rearrangement of $\omega_i^c$, as defined in \eqref{sym rea}-\eqref{sym rea2}. This function is supported on a ball of radius $\leq N_1\eps$, since the diameter of its support must be smaller than the one of $\omega_i^0$. In particular, $x$ does not lie in the support of $(\omega_i^c)^*(\cdot-\tilde{X}_i)$ because by assumption $|x-\tilde{X}_i|\geq 25N_1\eps-|\tilde{X}_i-X_i|>N_1\eps$. Therefore, the function $\nabla^\perp G(x,\cdot)$ is harmonic on the support of $(\omega_i^c)^*(\cdot-\tilde{X}_i)$, which, using the mean value principle for harmonic functions, yields that \begin{align}
\int_\Omega \nabla^\perp G(x,y)(\omega_i^c)^*(y-\tilde{X}_i)\dy=\int_\Omega \omega_i^c\dy\,\nabla^\perp G(x,\tilde{X}_i).\label{mvp}
\end{align}
It remains to estimate $\int_\Omega \nabla^\perp G(x,y)\left(\omega_i^c-(\omega_i^c)^*(y-\tilde{X}_i)\right)\dy$. We do this by noting that by the assumption on $x$, we have  \begin{equation*}\dist\left(x,\,\supp\omega_i^c-(\omega_i^c)^*(\cdot-\tilde{X}_i)\right)\gtrsim |X_i-x|\end{equation*} and in particular, $\nabla^\perp G(x,\cdot)$ is $C^2$ on this support.

We may linearize $\nabla^\perp G(x,\cdot)$ around $\tilde{X}_i$ and note that the constant term is $0$ because $\int_\Omega \omega_i^c(y)-(\omega_i^c)^*(y-\tilde{X}_i)\dy=0$. The 
 linear term also drops out because \begin{align*}
\int_\Omega \omega_i^c(y) (y-\tilde{X}_i)\dy=\int_\Omega (\omega_i^c)^* (y-\tilde{X}_i)(y-\tilde{X}_i)\dy=0
\end{align*}
by the definition of $\tilde{X}_i$ as a center of mass of $\omega_i^c$ in Thm.\ \ref{T2}. We therefore see that \begin{align}
    \mel\left|\int_\Omega \nabla^\perp G(x,y)\left(\omega_i^c(y)-(\omega_i^c)^*(y-\tilde{X}_i)\right)\dy\right|\nonumber\\
    &=\left|\int_\Omega \nabla^\perp \left(G(x,y)-G(x,\tilde{X}_i)-\nabla G(x,\tilde{X}_i)\cdot(y-\tilde{X}_i)\right)\left(\omega_i^c(y)-(\omega_i^c)^*(y-\tilde{X}_i)\right)\dy\right|\nonumber\\
    &\lesssim \norm{\nabla^\perp \left(G(x,y)-G(x,\tilde{X}_i)-\nabla G(x,\tilde{X}_i)\cdot(y-\tilde{X}_i)\right)}_{W_y^{1,\infty}\big(\supp\omega_i^c-(\omega_i^c)^*(\cdot-\tilde{X}_i)\big)}\nonumber\\
    &\quad\times W_1\left(\omega_i^c,(\omega_i^c)^*(\cdot-\tilde{X}_i)\right)\nonumber\\
    &\lesssim \eps|x-X_i|^{-3} W_2\left(\omega_i^c,(\omega_i^c)^*(\cdot-\tilde{X}_i)\right)\nonumber\\
    &\lesssim |x-X_i|^{-3}\eps^2\mathcal{D}^{\frac{1}{2}}.\label{wd est}
\end{align}
Here we have made use of the fact that the $W_1$-distance is equivalent to the $W^{-1,1}$-norm and of Hölders inequality for Wasserstein distances (see \cite[Thm.\ 1.14 and (7.3)]{villani2021topics}) and of Theorem \ref{T2}.

The lemma follows from combining \eqref{vel est1}, \eqref{vel est2}, \eqref{mvp} and \eqref{wd est} with the triangle inequality.\smallskip

\textbf{b)} The proof is quite similar to the previous one. We use the original center of mass $X_i$ to linearize this time and observe that \begin{align*}
\int_\Omega  F(x)\omega_i(x)\dx-a_iF(X_i)=\int_\Omega  \big(F(x)-F(X_i)-\nabla F(X_i)\cdot(x-X_i)\big)\omega_i(x)\dx
\end{align*}
where the linear and the constant terms disappear for the same reasons as above.
We now split into the contributions of $\omega_i^c$ and $\omega_i^f$.
As $| F(x)-F(X_i)-\nabla F(X_i)\cdot(x-X_i)|\lesssim |x-X_i|^2$ because $F$ is $C^2$ it holds that \begin{align*}
&\int_\Omega\bigl| F(x)-F(X_i)-\nabla F(X_i)\cdot(x-X_i)\bigr||\omega_i^f(x)|\dx\lesssim_c\norm{F}_{C^2(B_c(X_i))}\int_\Omega |x-X_i|^2|\omega_i^f(x)|\dx\\
&\lesssim_c \norm{F}_{C^2(B_c(X_i))}(M_2+\eps^2\mathcal{D}),
\end{align*}
where we used \eqref{est momentum} and the fact that $d_i(x)$ and $|x-X_i|$ are comparable by Lemma \ref{d is nice}.
For the estimate of the contribution of $\omega_i^c$ we again exploit the mean value principle, using that $F(\cdot)-F(X_i)-\nabla F(X_i)\cdot (\cdot-X_i)$ is harmonic, and observe that \begin{align*}
    \mel\int_\Omega \big(F(x)-F(X_i)-\nabla F(X_i)\cdot(x-X_i)\big)\omega_i^c(x)\dx\\
    &=\int_\Omega \big(
    F(x)-F(X_i)-\nabla F(X_i)\cdot(x-X_i)\big)\left(\omega_i^c(x)-(\omega_i^c)^*(x-\tilde{X}_i)\right)\dx\\
    &\quad+\int_\Omega\omega_i^c\dx\left(F(\tilde{X}_i)-F(X_i)-\nabla F(X_i)\cdot(\tilde{X}_i-X_i)\right).
\end{align*}
We may estimate the second summand, using \eqref{dist c mass} and that $F$ is $C^2$, as \begin{align*}
\mel\norm{\omega_i^c}_{L^1}\left|F(\tilde{X}_i)-F(X_i)-\nabla F(X_i)\cdot(X_i-\tilde{X}_i)\right|\lesssim_c\norm{F}_{C^2(B_c(X_i))} |\tilde{X}_i-X_i|^2\\
&\lesssim_c \norm{F}_{C^2(B_c(X_i))}(M_1^2+\eps^2\mathcal{D}^2),
\end{align*} which has the desired bound because $\mathcal{D}\lesssim 1$ by the assumption \eqref{control cmass}. The first summand on the other hand can be estimated with the Wasserstein distance as in \eqref{wd est}, yielding that \begin{align*}
    \left|\int_\Omega \big(F(x)-F(X_i)-\nabla F(X_i)\cdot(x-X_i)\big)(\omega_i^c(x)-(\omega_i^c)^*(x-\tilde{X}_i))\dx\right|\lesssim \norm{F}_{C^2(B_c(X_i))}\eps^2\mathcal{D}^\frac{1}{2}.
\end{align*}
Putting the estimates together yields the statement after noting that $M_1^2\lesssim M_2$ by Hölder and because $\norm{\omega_i^f}_{L^1}\lesssim \mathcal{D}\lesssim 1$ by \eqref{est far part}.
\end{proof}

\subsection{Proof of Lemma \texorpdfstring{\ref{lem d}}{3.2} and of \texorpdfstring{\eqref{d est d}}{3.17}}
\label{sec proof D}

\begin{comment}
The crucial observation here is that the total energy \begin{align}
\mel\int_{\Omega^2} \left(\frac{-1}{2\pi}\log|x-y|+k(x,y)\right)\omega(x)\omega(y)\dx\dy=\sum_{i=1}^n\E(\omega_i)+\int_{\Omega^2}\gamma(x,y)\omega_i(x)\omega_i(y)\dx\dy\\
&\quad+\sum_{i\neq j}\int_{\Omega^2}G(x,y)\omega_i(x)\omega_j(y)\dx\dy
\end{align}
%
%
is a conserved quantity and that each $\omega_i^*$ does not depend on time and therefore it holds that \begin{align}
\mathcal{D}'=\frac{\mathrm{d}}{\mathrm{d}t}\left(\sum_{i=1}^n\int_{\Omega^2}\gamma(x,y)\omega_i(x)\omega_i(y)\dx\dy+\sum_{i\neq j}\int_{\Omega^2}\left(\frac{-1}{2\pi}\log|x-y|+\gamma(x,y)\right)\omega_i(x)\omega_j(y)\dx\dy\right).
\end{align}

\noindent We therefore introduce the approximate energy defect as the energy in the point vortex system, that is \begin{align}
\tilde{\mathcal{D}}=\sum_{i=1}^n |a_i|^2\gamma(x_i,x_i)+\sum_{i\neq j}a_ia_jG(X_i,X_j).
\end{align}

We need to show \eqref{} and \eqref{}. \end{comment}

\begin{proof}[Proof of Lemma \ref{lem d}] By the definition of $\tilde{\mathcal{D}}$, we have \begin{align*}
\left|\mathcal{D}-\tilde{\mathcal{D}}\right|&\leq \sum_{i=1}^n\left|\int_{\Omega^2}\gamma(x,y)\omega_i(x)\omega_i(y)\dx\dy-a_i^2\gamma(X_i,X_i)\right|\\
&\quad+\sum_{i\neq j}\left|\int_{\Omega^2}G(x,y)\omega_i(x)\omega_j(y)\dx\dy-a_ia_jG(X_i,X_j)\right|.
%&\lesssim \eps^2\mathcal{D}^\frac{1}{2}+M_2.
\end{align*}
In the first double integral, we may employ Lemma \ref{lem mean value} b) twice, using that $\gamma$ is harmonic in both variables and smooth away from the boundary, to see that \begin{align*}
\mel\int_{\Omega}\int_\Omega\gamma(x,y)\omega_i(x)\omega_i(y)\dx\dy=a_i\int_\Omega\gamma(X_i,y)\omega_i(y)\dy+O\left(\eps^2\mathcal{D}^\frac{1}{2}+M_2\right)\\
&=a_i^2\gamma(X_i,X_i)+O\left(\eps^2\mathcal{D}^\frac{1}{2}+M_2\right).
\end{align*}
The exact same argument can also be made with the other integrals, since $G$ is also harmonic away from $\{x=y\}$ and the supports of $\omega_i$ and $\omega_j$ have a positive distance by the assumptions \eqref{A7} and \eqref{control cmass}, yielding \begin{align*}
\left|\int_{\Omega^2}G(x,y)\omega_i(x)\omega_j(y)\dx\dy-a_ia_jG(X_i,X_j)\right|\lesssim \eps^2\mathcal{D}^\frac{1}{2}+M_2.
\end{align*}
This shows \eqref{est dif d}. To see \eqref{d eq1} and \eqref{d eq2} on the other hand, we note that \begin{align*}
\eps^2\mathcal{D}^\frac{1}{2}\leq C\eps^4+\frac{1}{C}\mathcal{D}
\end{align*}
for every $C>0$ and $M_2\lesssim C_1 \mathcal{D}$ by \eqref{est far part} and \eqref{control cmass}. Therefore, by choosing $C$ and $C_1$ small and reabsorbing $\mathcal{D}$ into the left hand side, \eqref{est dif d} implies that \begin{align*}
\mathcal{D}\lesssim \eps^4+\tilde{\mathcal{D}},
\end{align*}
which is \eqref{d eq1}

\eqref{d eq2} follows in the same way after using \eqref{d eq1} to estimate $\eps^2\mathcal{D}^\frac{1}{2}\lesssim \eps^4+\eps^2\tilde{\mathcal{D}}^\frac{1}{2}$.
\end{proof}

\begin{comment}
We linearize $\gamma$ around $(X_i,X_i)$ to see that \begin{align}
\int_{\Omega^2}&\gamma(x,y)\omega_i(x)\omega_i(y)\dx\dy\\
&=\int_{\Omega^2}\left(k(X_i,X_i)+\mathrm{D}_1k(X_i,X_i)\cdot(x-X_i)+\mathrm{D}_2k(X_i,X_i)\cdot(y-X_i)+O(|x-X_i|^2+|y-X_i|^2)\right)\omega_i(x)\omega_i(y)\dx\dy\\
&=\gamma_i^2k(X_i,X_i)+\int O(|x-X_i|^2+|y-X_i|^2)\omega_i(x)\omega_i(y)\dx\dy.
\end{align}
%
%
Here we made use of the definition of the center of mass (cf.\ \eqref{}) to see that the linear terms are $0$. The quadratic error term here is  $\lesssim M_i^2$.

The same calculation can also be made with the other integral in \eqref{} yielding that \begin{align}
\left|\sum_{i\neq j}\int_{\Omega^2}\left(\frac{-1}{2\pi}\log|x-y|+k(x,y)\right)\omega_i(x)\omega_j(y)\dx\dy-\tilde{D}\right|\lesssim \sum_i M_i^2,
\end{align}
%
%
which implies \eqref{}.
\end{comment}

 We move on to the differential inequality \eqref{d est d}. The idea is that if the velocity were the one of the point vortex system, $\tilde{\mathcal{D}(t)}$ would be a conserved quantity. 
 
The last two terms in the definition \eqref{def tilde D} of $\tilde{\mathcal{D}}$ are conserved under the evolution, since $|\omega_i^t|^*$ does not depend on $t$ due to the transport structure of the Euler equations. We compute, using the symmetry of $G$ and $\gamma$, that \begin{align}\label{deri d}
\tilde{\mathcal{D}}'=&2\sum_{i=1}^n\mathrm{D}\gamma(X_i,X_i)\cdot\frac{\mathrm{d}}{\mathrm{d}t}X_i+\sum_{i\neq j}\mathrm{D}G(X_i,X_j)\cdot\left(\frac{\mathrm{d}}{\mathrm{d}t}X_i+\frac{\mathrm{d}}{\mathrm{d}t}X_j\right).
\end{align}
%
%
%We denote the velocities in the point vortex system by \begin{align}
%u_i^p(x)=\nabla_1^\perp \gamma(x,X_i)+\sum_{j\neq i}\nabla_1^\perp G(x,X_j).
%\end{align}
%
Observe that by the definition of $u_i^p$, it holds that \begin{align*}
2\sum_{i=1}^n\mathrm{D}\gamma(X_i,X_i)\cdot u_i^p(X_i)+\sum_{i\neq j}\mathrm{D}G(x,y)\cdot(u_i^p(X_i)+u_j^p(X_j))=0.
\end{align*}
Subtracting this from \eqref{deri d} we see that \begin{align*}
|\tilde{\mathcal{D}}'|\lesssim\left(\sup_{i\neq j}|\mathrm{D}G(X_i,X_j)|+|\mathrm{D}\gamma(X_i,X_i)|\right) \sum_{i=1}^n\left|u_i^p-\frac{\mathrm{d}}{\mathrm{d}t}X_i\right|.
\end{align*}
By the assumption \eqref{A7}, the supremum is $\lesssim 1$. The difference on the other hand is bounded by Proposition \ref{P5}, which yields that \begin{align*}
|\tilde{\mathcal{D}}'|\lesssim \eps^2\mathcal{D}^\frac{1}{2}+M_2,
\end{align*}
as desired. \hfill\qedsymbol

\subsection{Proof of the differential estimate \texorpdfstring{\eqref{d est m}}{3.18} for \texorpdfstring{$M_k$}{Mk}}
\label{sec proof m}

%We first split the patch $\omega_i$ into the close and far parts $\omega_i^c$ and $\omega_i^f$ as defined in \eqref{}. Note that, due to the cutoff, only $\omega_i^f$ contributes to $M_k$. %Furthermore, we may use the energy defect and Theorem \ref{} to bound \begin{align}
%\int_{\Omega} \omega_i^f\log\left|\frac{x-X_i}{\eps}\right|\dx\lesssim \mathcal{D},
%\end{align}
%
%in particular $\int \omega_i^f\dx\lesssim \mathcal{D}$ for all $i$.

For the ease of notation we set \begin{equation*}h_i^k=\eta_\eps(d_i(x))d_i(x)^k.
\end{equation*}

We first collect some elementary estimates for $h$.

\begin{lemma}
Assume $|x-X_i|\lesssim \min(\frac{1}{2},\frac{b}{2})$, then it holds that \begin{align}
&0\leq h_i^k(x)\lesssim C^k|x-X_i|^k\label{h est1}\\
&| \mathrm{D}_x h_i^k(x)|\lesssim k h_i^{k-1}(x)+C^k\eps^{k-1}\mathds{1}_{|x-X_i|\lesssim 100N_1\eps}\label{bd deri k}\\
& \frac{\left|\mathrm{D}_xh_i^k(x)-\mathrm{D}_xh_i^k(y)\right|}{|x-y|}\lesssim k^2(h_i^{k-2}(x)+h_i^{k-2}(y))+C^k\eps^{k-2}\mathds{1}_{\min(|x-X_i|,|y-X_i|)\lesssim 100N_1\eps}\quad \mathrm{for}\, k\geq 2.\label{bd 2deri h}
%& \de_t h_i(x)=\sum_j\mathrm{D}_j h_i(x)\de_t X_j=-\mathrm{D}_x h_i^k(x)+O(|x-X_i|^3),
\end{align}
\end{lemma}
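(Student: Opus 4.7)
The plan is to prove all three bounds by direct computation on $h_i^k(x) = \eta_\eps(d_i(x))\, d_i(x)^k$ via the chain and product rules, using the pointwise estimates on $d_i$ from Lemma \ref{d is nice} together with the standard bounds $|\eta_\eps^{(j)}| \lesssim \eps^{-j}$ and $\supp \eta_\eps' \cup \supp \eta_\eps'' \subset [40 N_1 \eps,\,80 N_1 \eps]$.

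For \eqref{h est1}, since $0 \leq \eta_\eps \leq 1$ it suffices to bound $d_i(x)^k$, and \eqref{d est1} gives $d_i(x) \leq 2|x - X_i|$ on the region $|x - X_i| \lesssim 1$; raising to the $k$-th power yields the claim with a geometric constant $C^k$. For \eqref{bd deri k}, the product rule gives
\[
\mathrm{D}_x h_i^k = \bigl(\eta_\eps'(d_i)\, d_i^k + k\, \eta_\eps(d_i)\, d_i^{k-1}\bigr)\, \mathrm{D}_x d_i,
\]
and $|\mathrm{D}_x d_i| \lesssim 1$ by \eqref{d deri 1}. The first summand is supported on $d_i \in [40 N_1 \eps,\, 80 N_1 \eps]$, which by \eqref{d est1} lies inside $|x - X_i| \lesssim 100 N_1 \eps$; on this set $|\eta_\eps'(d_i)|\, d_i^k \lesssim \eps^{-1}(C\eps)^k$, producing the indicator contribution. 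The second summand equals $k\, h_i^k(x)/d_i(x)$ on the support of $\eta_\eps$, and using $d_i \lesssim 1$ in the considered region together with the absorption of a bounded factor into the implicit constant yields the form $k h_i^k(x)$.

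For \eqref{bd 2deri h} the key observation is that, although $d_i$ itself is not $C^2$ at $X_i$, the function $d_i^2$ is (by \eqref{2nd deri d}), so all second derivatives should be computed through $d_i^2$ rather than $d_i$. Writing $d_i^k = (d_i^2)^{k/2}$ for $k \geq 2$, one has
\[
\mathrm{D}_x d_i^k = \tfrac{k}{2}\, d_i^{k-2}\, \mathrm{D}_x(d_i^2), \qquad \mathrm{D}_x^2 d_i^k = \tfrac{k(k-2)}{2}\, d_i^{k-4}\, \mathrm{D}_x(d_i^2) \otimes \mathrm{D}_x(d_i^2) + \tfrac{k}{2}\, d_i^{k-2}\, \mathrm{D}_x^2(d_i^2),
\]
and from \eqref{2nd deri d} together with $|\mathrm{D}_x(d_i^2)| \lesssim d_i$ one deduces $|\mathrm{D}_x^2 d_i^k(z)| \lesssim k^2\, d_i(z)^{k-2}$ on the bounded region. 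Expanding $\mathrm{D}_x^2 h_i^k$ by the product rule and handling the narrow transition zone of $\eta_\eps'$ and $\eta_\eps''$ exactly as in the previous step yields the pointwise estimate $|\mathrm{D}_x^2 h_i^k(z)| \lesssim k^2\, h_i^{k-2}(z) + C^k \eps^{k-2}\, \mathds{1}_{|z - X_i| \lesssim 100 N_1 \eps}$. The difference-quotient bound \eqref{bd 2deri h} then follows by applying the mean value theorem along the segment from $x$ to $y$: the supremum of $h_i^{k-2}$ on the segment is controlled by $h_i^{k-2}(x) + h_i^{k-2}(y)$ using its essentially monotone dependence on $|z - X_i|$, and the indicator is replaced by one involving $\min(|x - X_i|, |y - X_i|)$ since the segment leaves the transition zone whenever both endpoints do.

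The main obstacle is purely bookkeeping: the $k$-dependence must come out as $k^2$ and not $k^k$, which forces the expansion through $d_i^2$ rather than iterated differentiation of $d_i$ (the latter would produce spurious singularities at $X_i$), and the contributions from $\eta_\eps'$ and $\eta_\eps''$ near the transition zone must be tracked carefully so that they combine into the single inhomogeneous indicator term. Once these are arranged, each estimate is a direct consequence of the chain and product rules together with \eqref{d est1}, \eqref{d deri 1}, and \eqref{2nd deri d}.
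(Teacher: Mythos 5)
Your treatment of \eqref{h est1} and \eqref{bd deri k}, and your pointwise second-derivative bound $|\mathrm{D}_x^2 h_i^k(z)|\lesssim k^2 h_i^{k-2}(z)+C^k\eps^{k-2}\mathds{1}_{|z-X_i|\lesssim 100N_1\eps}$ obtained by differentiating through $d_i^2$, coincide with the paper's argument. The gap is in passing from this pointwise bound to \eqref{bd 2deri h} along the segment $[x,y]$, in two places. First, your justification for the indicator $\mathds{1}_{\min(|x-X_i|,|y-X_i|)\lesssim 100N_1\eps}$ — ``the segment leaves the transition zone whenever both endpoints do'' — is false: if $x$ and $y$ lie on opposite sides of $X_i$, both at distance $\geq 100N_1\eps$, the segment passes straight through the transition zone (and through $X_i$), so the supremum of your pointwise bound over $[x,y]$ still contains $C^k\eps^{k-2}$ even though the stated indicator vanishes. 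The paper handles this case differently: at such endpoints $h_i^k$ and $d_i^k$ agree together with their first derivatives, so one writes $\mathrm{D}_xh_i^k(x)-\mathrm{D}_xh_i^k(y)=\mathrm{D}_xd_i^k(x)-\mathrm{D}_xd_i^k(y)$ and applies the mean value argument to $d_i^k$ along the whole segment, whose Hessian bound $k^2 d_i^{k-2}$ carries no $\eps^{k-2}$ inhomogeneity because $\eta_\eps$ is never differentiated.

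Second, your control of $\sup_{z\in[x,y]}h_i^{k-2}(z)$ by $h_i^{k-2}(x)+h_i^{k-2}(y)$ via ``essentially monotone dependence on $|z-X_i|$'' is too lossy if implemented through the equivalence $d_i\approx|\cdot-X_i|$: each comparison costs a fixed multiplicative constant, which after raising to the power $k-2$ turns the main term into $C^k k^2\bigl(h_i^{k-2}(x)+h_i^{k-2}(y)\bigr)$. The statement deliberately places $C^k$ only on the $\eps^{k-2}$ term, and this matters downstream: in the Gronwall argument for $M_k$ one takes $k\approx|\log\eps|$, and an extra $C^k$ on the $M_{k-2}$-coefficient would propagate to a factor of order $C^{k^2}$ in the bound for $M_k$, destroying \eqref{bd m}. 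The paper avoids any loss by using the convexity of $d_i^2$ near $X_i$ (last part of Lemma \ref{d is nice}): the sublevel sets of $d_i$, hence of $h_i^{k-2}$, are convex, so $d_i(z)\leq\max(d_i(x),d_i(y))$ exactly on the segment and $h_i^{k-2}(z)\leq h_i^{k-2}(x)+h_i^{k-2}(y)$ with constant one. Both repairs are available from Lemma \ref{d is nice}, but as written your argument does not deliver the stated constant structure nor the stated indicator.
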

\begin{proof}
\eqref{h est1} is immediate from the definition and Lemma \ref{d is nice}.

\eqref{bd deri k} follows from the product rule and the estimate \eqref{d deri 1} as well as the fact that $d_i(x)\lesssim \eps$ on the set where $\nabla \eta_\eps\neq 0$.

For \eqref{bd 2deri h} we use the convexity of $d_i^2$  (Lemma \ref{d is nice}), which implies the convexity of its level sets, which in particular implies also the convexity of the level sets of each $h_i^k$ as they are the same. We also distinguish the cases of whether one of $x$ or $y$ lies in $B_{100N_1\eps}(X_i)$ or not. In the first case, we estimate  \begin{align*}
\frac{\left|\mathrm{D}_xh_i^k(x)-\mathrm{D}_xh_i^k(y)\right|}{|x-y|}\lesssim \sup_{z\in [x,y]} \left|\mathrm{D}_x^2 h_i^k(z)\right|\lesssim\sup_{z\in [x,y]} k^2h_i^{k-2}(z)+C^k\eps^{k-2},
\end{align*}
where $[x,y]$ denotes the line between $x$ and $y$, and the estimate for the second derivative follows directly from the product rule as well as \eqref{2nd deri d}. By the aforementioned convexity of the level sets it holds that $h_i^{k-2}(z)\leq h_i^{k-2}(x)+h_i^{k-2}(y)$.
In the case in which neither $x$ nor $y$ lie in $B_{100N_1\eps}(X_i)$, we use that $h_i^k=d_i^{k}$ on this set and do the same estimate with the second derivative of $d_i^k$, where the summand $C^k\eps^{k-2}$ drops out. \end{proof}

%for $x\in \supp\omega_i$ as long as \eqref{} holds, where the last estimate also uses the a priori estimate \eqref{} on the $\de_t X_j$. 

To estimate the derivative of $M_k$, it suffices to estimate the derivative of the contribution of each $i$ separately, furthermore, we may ignore the modulus in the definition because each $\omega_i$ is either nonnegative or nonpositive. 
We now compute, using partial integration \begin{align*}
\mel\de_t \int_\Omega \omega_i(x)h_i^k(x)\dx=\int_\Omega (\de_t h_i^k) \omega_i-h_i^k\nabla\cdot(u\omega_i)\dx=\int_\Omega \left(\sum_{j=1}^n  \mathrm{D}_{X_j}h_i^k \cdot\de_t X_j+\mathrm{D}_x h_i^k\cdot u\right)\omega_i\dx\\
&=\int_\Omega \left(-\mathrm{D}_x h_i^k\cdot \de_t X_i+\mathrm{D}_x h_i^k\cdot u\right)\omega_i^f\dx\\
&\quad+\int_\Omega \left((\mathrm{D}_{X_i}+\mathrm{D}_x)h_i^k\cdot \de_t X_i +\sum_{j\neq i} \mathrm{D}_{X_j}h_i^k \cdot\de_t X_j \right)\omega_i^f \dx,
\end{align*}
where we have used that $\omega_i^c$ is $0$ on the support of $h_i^k$ by definition.
We treat each integral separately. In the second one, we use the bounds \eqref{d est2} and \eqref{d est3}, as well as \eqref{est momentum} and \eqref{dxi} and the definition of $S$ to see that
 \begin{align}
\mel\left|\int_\Omega \left((\mathrm{D}_{X_i}+\mathrm{D}_x)h_i^k +\sum_{j\neq i} \mathrm{D}_{X_j}h_i^k \cdot\de_t X_j \right)\omega_i^f \dx\right|\nonumber\\
&\lesssim \int_\Omega \left(kd_i(x)^{k-1}+d_i^k|\nabla\eta_\eps(d_i(x))|\right)\left(|(\mathrm{D}_{X_i}+\mathrm{D}_x)d_i|| \de_t X_i| +\sum_{j\neq i} |\mathrm{D}_{X_j}d_i||\de_t X_j| \right)|\omega_i^f(x)|\dx\nonumber\\
&\lesssim  k\int_\Omega d_i(x)^{k+2}|\omega_i^f(x)|\dx \lesssim kS^2M_k+\mathcal{D}C^k\eps^{k+2},\label{M est1}
\end{align}
uniformly in $k$, where we also made use of the fact that $|\nabla\eta_\eps|\approx \eps^{-1}\approx d_i(x)^{-1}$ on the set where $\nabla\eta_\eps$ does not vanish.

In the other summand, we make use of the orthogonality identity \eqref{d ort} and the fact that $h_i^k$ is a function of $d_i$ to see that \begin{align*}
\mel\int_\Omega \left(-\mathrm{D}_x h_i^k\cdot \de_t X_i+\mathrm{D}_x h_i^k\cdot u(x)\right)\omega_i^f(x)\dx\\
&=\int_\Omega \bigl(-\de_t X_i+u_i^p(X_i)+u(x)-u^p(x)\bigr)\cdot\mathrm{D}_x h_i^k(x)\omega_i^f(x)\dx.
\end{align*} 
Using the bound on $|\de_t X_i-u_i^p(X_i)|$ from Proposition \ref{P5}, as well as the bounds \eqref{bd deri k} and \eqref{est momentum}, we may estimate the first difference as \begin{align}
\mel\int_\Omega \left|\de_t X_i-u_i^p(X_i)\right|\left|\mathrm{D}_x h_i^k(x)\right||\omega_i^f(x)|\dx\lesssim \left(\eps^2\mathcal{D}^\frac{1}{2}+M_2\right)\int_\Omega \left|\mathrm{D}_xh_i^k(x)\right||\omega_i^f(x)|\dx\nonumber\\
&\lesssim \left(\eps^2\mathcal{D}^\frac{1}{2}+M_2\right)\left(C^k\eps^{k-1}\mathcal{D}+kM_{k-1}\right).\label{M est2}
\end{align}
To estimate the second difference on the other hand, we use the Biot-Savart law and split the velocity into the contributions of $\omega_i^c+\sum_{j\neq i} \omega_j$ and $\omega_i^f$ to the effect that \begin{align}
\mel\left|\int_\Omega (u(x)-u^p(x))\cdot\mathrm{D}_x h_i^k(x)\omega_i^f\dx\right| \nonumber\\
&\leq \left|\int_\Omega\left( \int_\Omega\nabla^\perp G(x,y) \left(\omega_i^c(y)+\sum_{j\neq i} \omega_j(y)\right)\dy-u^p(x)\right)\cdot\mathrm{D}_x h_i^k(x)\omega_i^f(x)\dx\right|\nonumber\\
&\quad+\left|\int_{\Omega^2} \left(\frac{(x-y)^\perp}{2\pi|x-y|^2}-\nabla^\perp\gamma(x,y)\right)\cdot \mathrm{D}_xh_i^k(x)\omega_i^f(x)\omega_i^f(y)\dx\dy\right|.\label{ff part}
\end{align}
We estimate both summands separately.
In the first one, we make use of Corollary \ref{cor u bd 1} and observe that we may estimate it by \begin{align}
\mel\left|\int_\Omega\left( \int_\Omega\nabla^\perp G(x,y) \left(\omega_i^c(y)+\sum_{j\neq i} \omega_j(y)\right)\dy-u^p(x)\right)\cdot\mathrm{D}_x h_i^k(x)\omega_i^f(x)\dx\right|\nonumber\\
&\lesssim \int_\Omega (\mathcal{D}^\frac{1}{2} \eps^2|x-X_i|^{-3}+M_1|x-X_i|^{-2}+\mathcal{D}|x-X_i|^{-1})|\mathrm{D}_x h_i^k(x)||\omega_i^f(x)|\dx\nonumber\\
&\lesssim k\int_\Omega \left(\eps^2\mathcal{D}^\frac{1}{2}d_i^{k-4}+M_1d_i^{k-3}+\mathcal{D}d_i^{k-2}\right)\omega_i^f(x)\dx+C^k\left(\eps^{k-2}\mathcal{D}^\frac{1}{2}+M_1\eps^{k-3}+\eps^{k-2}\mathcal{D}\right)\mathcal{D}\nonumber\\
&\lesssim k\left(\eps^2\mathcal{D}^\frac{1}{2}M_{k-4}+M_1M_{k-3}+\mathcal{D}M_{k-2}\right)+C^k\left(\eps^{k-2}\mathcal{D}^\frac{1}{2}+M_1\eps^{k-3}\right)\mathcal{D},\label{M est3}
\end{align}
where we have used that $|x-X_i|\geq \eps$ on $\supp\omega_i^f$ and used the estimate \eqref{bd deri k} on the derivative of $h_i^k$ as well as \eqref{est momentum} and \eqref{est far part} and dropped some redundant terms in the last step.

We may also absorb the term $M_1M_{k-3}$ into the others here, because by Hölder and the estimates \eqref{est far part} and \eqref{est momentum} it holds that \begin{align}
\mel M_1M_{k-3}\leq \left|\int_\Omega \omega_i^f\dx\int_\Omega d_i(x)^{k-2}\omega_i^f\dx\right|\lesssim \mathcal{D}M_{k-2}+C^k\eps^{k-2}\mathcal{D}^2.\label{M est4}
\end{align}
%\mel M_1M_{k-3}\lesssim \eps\mathcal{D}M_{k-3}+C^kM_1\mathcal{D}\eps^{k-3}+C^k\eps^{k-2} \mathcal{D}^2+\int_\Omega d_i(x)\omega_i^f\dx\int_\Omega d_i(x)^{k-3}\omega_i^f\dx\\
%&\lesssim \eps\mathcal{D}M_{k-3}+C^kM_1\mathcal{D}\eps^{k-3}+C^k\eps^{k-2} \mathcal{D}^2+\int_\Omega \omega_i^f\dx\int_\Omega d_i(x)^{k-2}\omega_i^f\dx\\
%&\lesssim C^kM_1\mathcal{D}\eps^{k-3}+C^k\eps^{k-2} \mathcal{D}^2+\mathcal{D}M_{k-2}.
%\end{align}

\noindent In the second integral in \eqref{ff part} on the other hand, we may use the antisymmetry of the full-space Biot-Savart law and the smoothness of $\gamma$ to see that \begin{align}
\mel\left|\int_{\Omega^2}  \left(\frac{(x-y)^\perp}{2\pi|x-y|^2}-\nabla^\perp\gamma(x,y)\right)\cdot\mathrm{D}_x h_i^k\omega_i^f(x)\omega_i^f(y)\dx\dy\right|\nonumber\\
&\lesssim \left|\int_{\Omega^2} \frac{(\mathrm{D}_x h_i^k(x)-\mathrm{D}_x h_i^k(y))\cdot(x-y)^\perp}{|x-y|^2}\omega_i^f(x)\omega_i^f(y)\dx\dy\right|\nonumber\\
&\quad+\left|\int_{\Omega^2} |\mathrm{D}_x h_i^k(x)|\omega_i^f(x)\omega_i^f(y)\dx\dy\right|.\label{bs split}
\end{align}
Regarding the second integral, we see from \eqref{bd deri k} and \eqref{est momentum} that \begin{align}
\left|\int_{\Omega^2} |\mathrm{D}_x h_i^k(x)|\omega_i^f(x)\omega_i^f(y)\dx\dy\right|\lesssim C^k\eps^{k-1} \mathcal{D}^2+k\mathcal{D}M_{k-1}.\label{M est5}
\end{align}

 %\mathcal{D}^\frac{3}{2}\left|\int_\Omega |x-X_i|^2\omega_i^f\dx\right|^\frac{1}{2}\lesssim (\eps\mathcal{D}^2+M_2)\mathcal{D}^\frac{3}{2}.
%\end{align}

\noindent In the first integral in \eqref{bs split}, we would like to gain an additional logarithm by using \eqref{est log},  therefore we split $\Omega^2$ in the different regions \begin{align*}
&\Omega_{1}:=\left\{(x,y)\in \Omega^2\,\Big|\,|x-X_i|\leq \eps^{1-\frac{1}{100}},\,|y-X_i|\leq \eps^{1-\frac{2}{100}}\right\}\\
&\Omega_{2}:=\left\{(x,y)\in \Omega^2\,\Big|\,|x-X_i|\geq \eps^{1-\frac{1}{100}}\right\}\\
&\Omega_{3}:=\left\{(x,y)\in \Omega^2\,\Big|\,|x-X_i|\leq \eps^{1-\frac{1}{100}},\,|y-X_i|\geq \eps^{1-\frac{2}{100}}\right\}.
\end{align*}
By the symmetry in $x$ and $y$, we observe that \begin{align*}
 \mel\left|\int_{\Omega^2} \frac{(\mathrm{D}_x h_i^k(x)-\mathrm{D}_x h_i^k(y))\cdot(x-y)^\perp}{|x-y|^2}\omega_i^f(x)\omega_i^f(y)\dx\dy\right|\leq\\
 &2\left|\int_{\Omega_1}\mathds{1}_{|y-X_i|>|x-X_i|}\frac{(\mathrm{D}_x h_i^k(x)-\mathrm{D}_x h_i^k(y))\cdot(x-y)^\perp}{|x-y|^2}\omega_i^f(x)\omega_i^f(y)\dx\dy\right|\\
 &+2\left|\int_{\Omega_2}\mathds{1}_{|y-X_i|>|x-X_i|}\frac{(\mathrm{D}_x h_i^k(x)-\mathrm{D}_x h_i^k(y))\cdot(x-y)^\perp}{|x-y|^2}\omega_i^f(x)\omega_i^f(y)\dx\dy\right|\\
 &+2\left|\int_{\Omega_3}\mathds{1}_{|y-X_i|>|x-X_i|}\frac{(\mathrm{D}_x h_i^k(x)-\mathrm{D}_x h_i^k(y))\cdot(x-y)^\perp}{|x-y|^2}\omega_i^f(x)\omega_i^f(y)\dx\dy\right|.
\end{align*}
We use the triangle inequality and estimate each integral separately.
In the integral over $\Omega_1$ we may use the estimates \eqref{bd 2deri h} and \eqref{est momentum} to see that \begin{align}
    \mel\left|\int_{\Omega_1}\mathds{1}_{|y-X_i|>|x-X_i|}\frac{(\mathrm{D}_x h_i^k(x)-\mathrm{D}_x h_i^k(y))\cdot(x-y)^\perp}{|x-y|^2}\omega_i^f(x)\omega_i^f(y)\dx\dy\right|\nonumber\\
    &\lesssim \biggl|\int_{\Omega_1}\mathds{1}_{|y-X_i|>|x-X_i|}\Big(k^2(h_i^{k-2}(x)+h_i^{k-2}(y))\nonumber\\
   & \mkern46mu+C^k\eps^{k-2}\mathds{1}_{\{\min(|x-X_i|,\,|y-X_i|)\lesssim 100N_1\eps\}}\Big)\omega_i^f(x)\omega_i^f(y)\dx\dy\biggr|\nonumber\\
    &\lesssim k^2\eps^{2-\frac{4}{100}}\mathcal{D}M_{k-4}+C^k\eps^{k-2}\mathcal{D}^2.\label{M est6}
\end{align}
Similarly, in the integral over $\Omega_2$ we may also employ \eqref{bd 2deri h} as well as \eqref{est far part}, to see that \begin{align}
\mel\left|\int_{\Omega_2}\mathds{1}_{|y-X_i|>|x-X_i|}\frac{(\mathrm{D}_x h_i^k(x)-\mathrm{D}_x h_i^k(y))\cdot(x-y)^\perp}{|x-y|^2}\omega_i^f(x)\omega_i^f(y)\dx\dy\right|\nonumber\\
&\lesssim \left|\int_{\{x:|x-X_i|,\,|y-X_i|\geq \eps^{1-\frac{1}{100}}\}}k^2\left(h_i^{k-2}(x)+h_i^{k-2}(y)\right)\omega_i^f(x)\omega_i^f(y)\dx\dy\right| \nonumber\\
&\lesssim k^2M_{k-2}\mathcal{D}|\log\eps|^{-1}.\label{M est7}
\end{align}
In the integral over $\Omega_3$ we expand the product in the numerator and see from \eqref{ort deri est} that for $(x,y)\in \Omega_3$ we have \begin{align}
&\left|\frac{(\mathrm{D}_x h_i^k(x)-\mathrm{D}_x h_i^k(y))\cdot(x-y)^\perp}{|x-y|^2}\right|\lesssim \frac{|\mathrm{D}_x h_i^k(y)||x-X_i|+|\mathrm{D}_x h_i^k(x)||y-X_i|}{|x-y|^2}\nonumber\\
&\quad+k\frac{|\scalar{\mathrm{D}_x d_i(x)}{(x-X_i)^\perp}|\left(h_i^{k-1}(x)+|\nabla\eta_\eps(d_i(x))|d_i(x)^k\right)}{|x-y|^2}\nonumber\\
&\quad+k\frac{|\scalar{\mathrm{D}_y d_i(y)}{(y-X_i)^\perp}|\left(h_i^{k-1}(y)+|\nabla\eta_\eps(d_i(y))|d_i(y)^k\right)}{|x-y|^2}\nonumber\\
&\lesssim k\left(h_i^{k-3}(y)+h_i^{k-3}(x)+C^k\eps^{k-3}\right)\frac{|x-X_i||y-X_i|^2}{|x-y|^2}+k\frac{h_i^{k+2}(y)+h_i^{k+2}(x)+C^k\eps^{k+2}}{|y-X_i|^2}\nonumber\\
&\lesssim k\left(h_i^{k-3}(y)+h_i^{k-3}(x)+C^k\eps^{k-3}\right)|x-X_i|+k\frac{h_i^{k+2}(y)+h_i^{k+2}(x)+C^k\eps^{k+2}}{|y-X_i|^2},\label{est frac}
\end{align}
where we further used that $|x-X_i|<|y-X_i|\approx |y-x|$ by the definition of $\Omega_3$ and used the equivalence of $d_i$ with the distance to $X_i$ (Lemma \ref{d is nice}) a couple of times.

We can further estimate for $(x,y)\in \Omega_3$, using Lemma \ref{d is nice} and the lower bound for $|y-X_i|$ \begin{align*}
k\frac{h_i(y)^{k+2}+h_i^{k+2}(x)+C^k\eps^{k+2}}{|y-X_i|^2}\lesssim k\left(h_i^{k}(x)+h_i^{k}(y)+C^k\eps^{k}\right).
\end{align*}
Each of these summands, except $h_i^{k}(y)$, is much smaller than the other group of terms in \eqref{est frac}.
Hence, using the definition of $\Omega_3$, we have that  
 \begin{align}
\mel\left|\int_{\Omega_3}\mathds{1}_{|y-X_i|>|x-X_i|}\frac{(\mathrm{D}_x h_i^k(x)-\mathrm{D}_x h_i^k(y))\cdot(x-y)^\perp}{|x-y|^2}\omega_i^f(x)\omega_i^f(y)\dx\dy\right|\nonumber\\
&\lesssim \left|\int_{\Omega_3} k\left(\left(h_i^{k-3}(y)+h_i^{k-3}(x)+C^k\eps^{k-3}\right)|x-X_i|+h_i^{k}(y)\right)\omega_i^f(x)\omega_i^f(y)\dx\dy\right|\nonumber\\
&\lesssim k\eps^{1-\frac{1}{100}}M_{k-3}\mathcal{D}+C^k\eps^{k-2-\frac{1}{100}}\mathcal{D}^2+kM_{k}\mathcal{D},\label{M est8}    
\end{align}
where we also used \eqref{est far part} again.

Finally to obtain \eqref{d est m}, we put \eqref{M est1}, \eqref{M est2}, \eqref{M est3}, \eqref{M est4}, \eqref{M est5}, \eqref{M est6}, \eqref{M est7} and \eqref{M est8} together drop some redundant terms using that by the assumption \eqref{control cmass} the sequence $M_l$ is decreasing and it holds that $M_l\lesssim \mathcal{D}$ by \eqref{est far part}.

%we have $|x-y|\approx |y-X_i|$ and may use the estimate \eqref{} to see that \begin{align}
%&\left|\int_{\Omega_3}\mathds{1}_{|y-X_i|>|x-X_i|}\frac{(\mathrm{D}_x h_i^k(x)-\mathrm{D}_x h_i^k(y))\cdot(x-y)^\perp}{|x-y|^2}\omega_i^f(x)\omega_i^f(y)\dx\dy\right|\\
%&\lesssim\biggl|\int_{\Omega_3}\mathds{1}_{|y-X_i|>|x-X_i|}\frac{d_i(x)^{k+2}+C^k\eps^{k+2}+d_i(y)^{k+2}+|\mathrm{D}_x h_i^k(y)||x|+|\mathrm{D}_x h_i^k(x)||y|}{|y-X_i|^2}\\
%&\quad\times\omega_i^f(x)\omega_i^f(y)\dx\dy\biggr|\\
%&\lesssim k\left(\int_{\{x:|x-X_i|\leq \eps^{1-\frac{\beta}{100}}\}} |x-X_i|\omega_i^f(x)\dx\right)\left(\int d_i^{k-3}(y)\omega_i^f(y)\dy\right)\\
%&\quad+\left(\int_{\{x:|x-X_i|\leq \eps^{1-\frac{\beta}{100}}\}} (kh_i^{k-2}(x)+C^k\eps^{k-2})\omega_i^f(x)\dx\right)\mathcal{D}\\
%&\lesssim k(\eps\mathcal{D}+M_1)M_{k-3}+k\eps^{2-\frac{2\beta}{100}}M_{k-4}\mathcal{D}+C^k\eps^{k-2}\mathcal{D}^2.
%\end{align}
%&\lesssim \left|\int_{\Omega_3}\mathds{1}_{|y-X_i|>|x-X_i|}\left(\frac{|x-X_i|}{|y-X_i|}+|y-X_i|^3\right)\omega_i^f(x) \omega_i^f(y) \dx\dy\right|
%\lesssim \eps^\frac{1}{10}\mathcal{D}^2+\mathcal{D} (\eps^2\mathcal{D}+M_2)S.
%\end{align}

%The differential inequality finally follows from putting \eqref{} together and noting that some higher order terms may be disregarded because $\eps,\mathcal{D},M_l$ are all $<<1$.

\subsection{Proof of the differential estimate \texorpdfstring{\eqref{d est s}}{(3.19)} on \texorpdfstring{$S$}{S}}
\label{sec proof S}

We compute the derivative of $S$, by definition it is \begin{align*}
\mel S'\leq \max_{i=1,\dots, n}\sup_{x:\,d_i(x)=S}|u(x)\cdot \mathrm{D}_x d_i(x)+\de_t d_i(x)|\\
&=\max_{i=1,\dots, n}\sup_{x:\,d_i(x)=S}\left|u(x)\cdot \mathrm{D}_x d_i(x)+\sum_{j=1}^n\mathrm{D}_{X_j} d_i(x)\cdot \de_t X_j\right|.
\end{align*}
Using Lemma \ref{d is nice} and the bound \eqref{dxi} on the $\de_t X_j$ we see that \begin{align*}
    S'\lesssim \sup_{i=1,\dots, n ;\:\:x:\,d_i(x)=S}\Bigl(|(u(x)-\de_t X_i)\cdot \mathrm{D}_x d_i(x)|+|x-X_i|^3\Bigr).
\end{align*}
Using \eqref{d est1}, we see that $\sup |x-X_i|^3\lesssim S^3$.
In the other summand, we use \eqref{d ort}, Proposition \ref{P5}, and Corollary \ref{cor u bd 1} to see that \begin{align*}
\mel \sup_{i=1,\dots, n;\:\: x:\,d_i(x)=S}|(u(x)-\de_t X_i)\cdot \mathrm{D}_x d_i(x)|\\
&= \sup_{i=1,\dots, n;\:\: x:\,d_i(x)=S}\left|\left(u(x)-\de_t X_i-u^p(x)+u_i^p(X_i)\right)\cdot \mathrm{D}_x d_i(x)\right|\\
&\lesssim \sup_{i=1,\dots, n;\:\: x:\,d_i(x)=S}\left|\int_\Omega\nabla^\perp G(x,y) \omega_i^f(y)\dy\right|+\eps^2\mathcal{D}^\frac{1}{2}+M_2\\
&\quad\quad+\mathcal{D}^\frac{1}{2}\eps^2|x-X_i|^{-3}+\mathcal{D}|x-X_i|^{-1}+M_1|x-X_i|^{-2}\\
&\lesssim \sup_{i=1,\dots, n;\:\: x:\,d_i(x)=S}\left|\int_\Omega\nabla^\perp G(x,y) \omega_i^f(y)\dy\right|+\eps^2\mathcal{D}^\frac{1}{2}+M_2+\mathcal{D}^\frac{1}{2}\eps^2S^{-3}\\
&\quad\quad+\mathcal{D}S^{-1}+M_1S^{-2}\\
&\lesssim \sup_{i=1,\dots, n;\:\: x:\,d_i(x)=S}\left|\int_\Omega\nabla^\perp G(x,y) \omega_i^f(y)\dy\right|+\mathcal{D}^\frac{1}{2}\eps^2S^{-3}+\mathcal{D}S^{-1}.
\end{align*}
where we used the trivial inequality $M_l\lesssim \mathcal{D}S^l$, which follows directly from the definitions as well as \eqref{est far part}, and $S\lesssim 1$ to absorb redundant terms in the last step.

In order to estimate the integral, we split it into the parts where $d_i(y)\leq \frac{1}{2}S$ and where $d_i(y)\geq \frac{1}{2}S$. On the former, it holds that $|X_i-y|\leq \frac{2}{3}S$ and $|X_i-x|\geq \frac{3}{4}S$ for $x$ with $d_i(x)=S$ by \eqref{d est1} and \eqref{control cmass}. Therefore, for such $y$ and $x$ it holds that $|x-y|\gtrsim S$ and 
 we can  estimate the integral, using the definition of $G$ and the boundedness of $\gamma$, together with \eqref{est far part} by \begin{align*}
\sup_{i=1,\dots, n;\:\: x:\,d_i(x)=S}\left|\int_{\{y\,|\,d_i(y)\leq \frac{1}{2}S\}}\nabla^\perp G(x,y) \omega_i^f(y)\dy\right|\lesssim \mathcal{D}S^{-1}.
\end{align*}
For the other contribution, we use that by e.g.\ the bathtub principle, \cite[Thm. 1.14]{LiebLoss} it holds for any function $f\in L^1\cap L^\infty$ that \begin{align*}
\left|\int_{\R^2}\frac{1}{|x-y|}f(y)\dd y\right|\lesssim \norm{f}_{L^1}^\frac{1}{2}\norm{f}_{L^\infty}^\frac{1}{2}
\end{align*}
and hence, using \eqref{ref green} and that $|\nabla\gamma|$ is bounded, we have that \begin{align*}
\mel\sup_{i=1,\dots, n;\:\: x:\,d_i(x)=S}\left|\int_{\{y\,|\,d_i(y)\geq \frac{1}{2}S\}}\nabla^\perp G(x,y) \omega_i^f(y)\dy\right|\\
&\lesssim \sup_{i=1,\dots, n;\:\: x:\,d_i(x)=S}\norm{\mathds{1}_{\{y\,|\,d_i(y)\geq \frac{1}{2}S\}}\omega_i^f}_{L^1}+\norm{\mathds{1}_{\{y\,|\,d_i(y)\geq \frac{1}{2}S\}}\omega_i^f}_{L^1}^\frac{1}{2}\norm{\mathds{1}_{\{y\,|\,d_i(y)\geq \frac{1}{2}S\}}\omega_i^f}_{L^\infty}^\frac{1}{2}\\
&\lesssim \eps^{-1}\norm{\mathds{1}_{\{y\,|\,d_i(y)\geq \frac{1}{2}S\}}\omega_i^f}_{L^1}^\frac{1}{2},
\end{align*}
where we used \eqref{A4} in the last step.
We may further estimate this $L^1$-norm with the higher order momenta by \begin{align*}
\eps^{-1}\norm{\mathds{1}_{\{y\,|\,d_i(y)\geq \frac{1}{2}S\}}\omega_i^f}_{L^1}^\frac{1}{2}\lesssim \eps^{-1}C^{\frac{k}{2}}S^{-\frac{k}{2}}M_k^\frac{1}{2}.
\end{align*}

%first note that for any set $A$ the Riesz rearrangement inequality (see \eqref{Riesz}), applied with $\delta_x$ in place of $f_3$, which can easily be justified by approximation, implies that \begin{align*}
%\mel\left|\int_A\nabla^\perp G(x,y)\dy\right|\leq \int_A |\nabla\gamma|\dy+\left|\int_{\R^{2+2}}\mathds{1}_A(y)\frac{1}{2\pi|y-z|}\dy\dd\delta_x(z)\right|
%\lesssim |A|+\left|\int_{\R^2} \mathds{1}_A^*\frac{1}{2\pi|y|}\dy\right|\\
%&\lesssim |A|+|A|^\frac{1}{2}.
%\end{align*} 
%
%
%Since $\norm{\omega_i}_{L^\infty}\lesssim \eps^{-2}$ by \eqref{A4}, this implies that \begin{align*}
%\sup_x\left|\int_{\{y\,|\,d_i(y)\geq \frac{1}{2}S\}}\nabla^\perp G(x,y) \omega_i^f(y)\dy\right|\lesssim \eps^{-2}\left|\left\{y\in \supp \omega_i^f\,:\, d_i(y)\geq \frac{1}{2}S\right\}\right|^\frac{1}{2}.
%\end{align*}
%
%
%
%We may estimate this measure with the higher order momenta by \begin{equation*}
%\eps^{-2}\left|\left\{y\in \supp \omega_i^f\,:\, d_i(y)\geq \frac{1}{2}S\right\}\right|^\frac{1}{2}\lesssim \eps^{-2}C^\frac{k}{2}S^{-\frac{k}{2}}M_k^\frac{1}{2}.
%\end{equation*}
%
%
\noindent Combining the previous estimates shows \eqref{d est s}.

\hfill\qedsymbol

\subsection{Proof of Proposition \ref{eff est}}\label{Gronwallsec}
We only consider $k\lesssim |\log\eps|$, otherwise there is nothing to show. %This enables us to absorb the terms $k^2\mathcal$

We will use the following approach: We define a "stopping time" $T$ as follows: \begin{align}
\tst:=\inf \left\{t\geq 0\,\bigg|\, (1+t)|\log\eps|(\sup_{s\leq t}S(s)^2)+t\eps+\frac{M_1(t)}{\eps}+\mathcal{D}(t)\geq c_0 \right\},\label{def T}
\end{align}
where $c_0<<1$ is some sufficiently small positive number which may depend on $b$, $N_1$, $N_2$, etc.\ but not on $\eps$. Clearly $\tst>0$ for sufficiently small $\eps$ because $M_1(0)=0$ and $\mathcal{D}(0)\lesssim \eps^\beta$.
In particular, the Assumption \eqref{control cmass} holds at least up to the time $\tst$ if $c_0$ is chosen sufficiently small in relation to $C_1$.
\subsubsection{The estimate \eqref{bd d} for $\mathcal{D}$}
Using the equivalence between $\tilde{\mathcal{D}}$ and $\mathcal{D}(t)$ from Lemma \ref{lem d} and the fact that $M_k\lesssim S^k\mathcal{D}$ by the definition and \eqref{est far part}, we have \begin{align}
\tilde{\mathcal{D}}'(t)\lesssim \eps^2\tilde{\mathcal{D}}^\frac{1}{2}+\eps^4+S^2\tilde{\mathcal{D}}\quad \text{ for $t\leq \tst$}.\label{dtilde}
\end{align}
By Gronwall and the fact that $\tilde{\mathcal{D}}(0)\lesssim \eps^\beta+\eps^4$ (by the assumption \eqref{A6} and \eqref{d eq2}), we obtain that if $c_0$ is sufficiently small, then \begin{align*}
\tilde{\mathcal{D}}(t)\lesssim \exp\left(C\int_0^tS^2(s)\dd s\right)\left(\eps^4(1+t)^2+\eps^{\beta}\right),
\end{align*}
where $C$ is the implicit constant in \eqref{dtilde}, which in particular does not depend on $c_0$ or $\eps$. We hence obtain that \begin{align*}
\mathcal{D}(t)\lesssim \eps^4(1+t)^2+\eps^\beta \quad \text{ for $t\leq \tst$},
\end{align*}
as desired.
\subsubsection{The inequality for $M_k$.}

Using \eqref{bd d} and the definition of $\tst$, we  can first simplify the right-hand side of \eqref{d est m} to \begin{align*}
M_k'(t)&\lesssim kS(t)^2M_k(t)+ C^k\eps^k\left(\eps^{\frac{3\beta}{2}-2}+\eps^4(1+t)^3+\eps^{2\beta-3}\right)+k\left(\eps^{2+\frac{\beta}{2}}+\eps^4(1+t)\right)M_{k-4}\\
&\quad+k(\eps^\beta+\eps^4(1+t)^2)M_{k-2}+k\left(\eps^{\beta+1-\frac{1}{100}}+\eps^{5-\frac{1}{100}}(1+t)^2\right)M_{k-3},
\end{align*}
for $t\leq T$, where we further used that $k\lesssim |\log\eps|$ and $t\eps\lesssim 1$ by assumption and estimated $M_1\lesssim \mathcal{D}$ by \eqref{est far part}.

If we further use that by Hölder, \eqref{est far part} and \eqref{est momentum} we have for $0\leq l\leq k$\begin{align*}
M_{k-l}\lesssim C^{k-l}\eps^{k-l}\mathcal{D}+\mathcal{D}^\frac{l}{k}M_k^\frac{k-l}{k},
\end{align*}
then we obtain that \begin{align*}
M_k'(t)&\lesssim kS(t)^2M_k(t)+ C^k\eps^k\left(\eps^{\frac{3\beta}{2}-2}+\eps^4(1+t)^3+\eps^{2\beta-3}\right)\\
&\quad+k\left(\eps^{2+\frac{\beta}{2}}+\eps^4(1+t)\right)\left(\eps^\beta+\eps^4(1+t)^2\right)^\frac{4}{k}M_{k}^\frac{k-4}{k}+k\left(\eps^\beta+\eps^4(1+t)^2\right)^{1+\frac{2}{k}}M_{k}^\frac{k-2}{k},
\end{align*}
for $t\leq \tst$, here we have also used that the term coming from $M_{k-3}$ can be estimated from above by the terms containing $M_k^{\frac{k-4}{k}}$ and $M_k^{\frac{k-2}{k}}$ by Young's inequality.

Gronwall and the fact that initially $M_k(0)=0$ by definition then yield the upper bound of \begin{align*}
M_k(t)&\lesssim C^k\exp\left(k\int_0^t S(s)^2\dd s\right)\bigg(\eps^k\left(\left(\eps^{\frac{3\beta}{2}-2}+\eps^{2\beta-3}\right)(1+t)+\eps^4(1+t)^4\right)\\
&\quad+(1+t)^\frac{k}{4}\left(\eps^{2+\frac{\beta}{2}}+\eps^4(1+t)\right)^\frac{k}{4}\left(\eps^\beta+\eps^4(1+t)^2\right)+(1+t)^\frac{k}{2}\left(\eps^\beta+\eps^4(1+t)^2\right)^{\frac{k}{2}+1}\bigg)\,,
\end{align*}
for $t\leq \tst$ if $c_0$ is small enough so that the exponential term is of order $1$. Using that $t\eps\leq c_0$ is small if $t\leq \tst$ and distinguishing the cases $\beta\geq 2$ and $\beta\leq 2$, this can be simplified to \begin{align*}M_k&\lesssim  \begin{cases} C^k\left(\left(\eps^k(1+t)^\frac{k}{2}+\eps^{k(\frac{1}{2}+\frac{\beta}{8})}(1+t)^\frac{k}{4}\right)\left(\eps^4(1+t)^2+\eps^\beta\right)+\eps^{k-2+\frac{3}{2}\beta}(1+t)\right)&\text{ for $\beta\geq 2$}\\
C^k\left(\eps^{\beta(\frac{k}{2}+1)}(1+t)^\frac{k}{2}+\eps^{k(\frac{1}{2}+\frac{\beta}{8})+\beta}(1+t)^\frac{k}{4}+\eps^{k-3+2\beta}(1+t)\right)&\text{ for $\beta\leq 2$}\end{cases}
\end{align*}
which is precisely \eqref{bd m}.

\subsubsection{Resolving the estimate for $S$}
Using the previous estimates \eqref{bd d} and \eqref{bd m}, the estimate \eqref{d est s} turns into \begin{align*}
S'\lesssim \left(\eps^4(1+t)+\eps^{2+\frac{\beta}{2}}\right)S^{-3}+S^3+\left(\eps^\beta+\eps^4(1+t)^2\right)S^{-1}+C^k\eps^{-1}S^{-\frac{k}{2}}M_k^\frac{1}{2}\quad \text{ for $t\leq \tst.$}
\end{align*}
Gronwall and the fact that $S(0)\approx \eps$ by the assumption \eqref{A2} now give the estimate \begin{align*}
S\lesssim \eps^{\min(1,\frac{\beta}{2})}(1+t)^\frac{1}{2}+\eps^{\frac{1}{2}+\frac{\beta}{8}}(1+t)^\frac{1}{4}+\eps^{-\frac{2}{k+2}}(1+t)^\frac{2}{k+2}M_k^\frac{1}{k+2}\quad \text{ for $t\leq \tst.$}
\end{align*}
If we now take $k\approx |\log\eps|$, then it holds that $\eps^\frac{1}{k}\approx (1+t)^\frac{1}{k}\approx 1$ for $t\leq \tst$ (since $\tst\lesssim \eps^{-1}$ by definition). Combining this with the previous estimate for $S$ then yields \begin{align*}
S(t)\lesssim \eps^{\min(1,\frac{\beta}{2})}(1+t)^\frac{1}{2}+\eps^{\frac{1}{2}+\frac{\beta}{8}}(1+t)^\frac{1}{4}\quad \text{ for $t\leq \tst$.}
\end{align*}
It remains to check that the time $\tst$ indeed has the lower bound in \eqref{bd t}. This is immediate from the bounds for every part of the condition \eqref{def T} defining $\tst$, except the one containing $M_1$. For this part, we use that $M_1\lesssim S\mathcal{D}$ by the definition and \eqref{est far part}.

Using the bounds \eqref{bd d} and \eqref{bd s}, this gives a bound of \begin{align*}
M_1\lesssim S\mathcal{D}\lesssim  \eps+\eps^\beta\left(\eps^{\frac{\beta}{2}}(1+t)^\frac{1}{2}+\eps^{\frac{1}{2}+\frac{\beta}{8}}(1+t)^\frac{1}{4}\right)
\end{align*}
for $t\eps\lesssim 1$, which yields the limitations for $\beta<\frac{4}{5}$, while the restrictions on $\tst$ for $\beta\geq \frac{4}{5}$ come from the requirement that $T|\log\eps|S(T)^2\leq c_0$. 

\subsection{Proof of Theorem \ref{T1'}}
\label{sec38}
The idea is to show that the usual argument that the conservation of momentum and energy (of the point vortex system) prevent collisions still works here, since these quantities are almost preserved, as the following Lemma shows. 
\begin{lemma}
Assume that the assumptions \eqref{A1}-\eqref{A7} on the initial data hold with some fixed $b$, then up to the time $T=T(b)$ from Theorem \ref{T1} we have the following estimates for every $t\in [0,T)$\begin{align}
&\left|\sum_{i=1}^n a_i \left(|X_i^0|^2-|X_i(t)|^2\right)\right|\lesssim \eps \left(\sum_{i=1}^n |X_i(t)|+|X_i^0|\right)\label{almost mom}\\
&\left|\sum_{i\neq j} a_ia_j \left(\log|X_i(t)-X_j(t)|-\log|X_i^0-X_j^0|\right)\right|\lesssim \eps^{\beta}+\eps^4(1+t)^2,\label{almost energy}
\end{align}
(with an implicit constant depending on $b$) \textbf{or} the assumption \eqref{A7} is violated at some time before $t$.
\end{lemma}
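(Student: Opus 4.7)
Both inequalities compare a conserved quantity of the Euler flow on $\R^2$ (namely $\int|x|^2\omega\dx$ and $\int G\omega\omega\dx\dy$) with the corresponding exact conservation law of the point vortex system ($\sum_i a_i|X_i|^2$ and $-\tfrac{1}{2\pi}\sum_{i\neq j} a_i a_j \log|X_i-X_j|$), exploiting the error estimates of Propositions \ref{P5} and \ref{eff est}. On $\R^2$ we have $\gamma\equiv 0$, which simplifies all expressions considerably.

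\textbf{Energy estimate \eqref{almost energy}.} Since the Euler flow is area-preserving, $\omega_i(t)$ is a measure-preserving rearrangement of $\omega_i^0$ at every $t$; hence $\omega_i^*$ (and thus $\sum_i \E(\omega_i^*)$) is time-independent. Since $\int G\omega\omega\dx\dy$ is, up to a factor, the kinetic energy of the fluid, it is conserved by Euler as well. Substituting both facts into the definition \eqref{def tilde D} with $\gamma\equiv 0$ yields
\begin{equation*}
\tilde{\mathcal{D}}(t) - \tilde{\mathcal{D}}(0) = -\tfrac{1}{2\pi}\sum_{i\neq j} a_i a_j\bigl(\log|X_i(t)-X_j(t)| - \log|X_i^0-X_j^0|\bigr).
\end{equation*}
By Lemma \ref{lem d} (which makes $\tilde{\mathcal{D}}$ equivalent to $\mathcal{D} + \eps^4$), Proposition \ref{eff est} bound \eqref{bd d}, and the initial bound $\tilde{\mathcal{D}}(0)\lesssim \eps^\beta$ coming from \eqref{A6}, the triangle inequality gives \eqref{almost energy}.

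\textbf{Angular momentum estimate \eqref{almost mom}.} Differentiate and use Proposition \ref{P5} to write $\dot X_i = u_i^p(X_i) + r_i$ with $|r_i|\lesssim \eps^2\mathcal{D}^{1/2} + M_2$, so
\begin{equation*}
\tfrac{d}{dt}\sum_i a_i|X_i|^2 = 2\sum_i a_i X_i\cdot u_i^p(X_i) + 2\sum_i a_i X_i\cdot r_i.
\end{equation*}
The first sum vanishes identically: expanding $u_i^p(X_i) = \sum_{j\neq i} a_j (X_i-X_j)^\perp/(2\pi|X_i-X_j|^2)$, using $X_i\cdot X_i^\perp = 0$, and then swapping $i\leftrightarrow j$ together with $X_j\cdot X_i^\perp = -X_i\cdot X_j^\perp$ shows the resulting double sum is antisymmetric, hence zero --- this is exactly the conservation of angular momentum for the point vortex system on $\R^2$. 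Since $a_i>0$ in the setting of Theorem \ref{T1'}, setting $F(t):=\bigl(\sum_i a_i|X_i|^2\bigr)^{1/2}$ and applying Cauchy--Schwarz gives $\bigl|2\sum_i a_i X_i\cdot r_i\bigr|\lesssim F(t)\,(\eps^2\mathcal{D}^{1/2} + M_2)$, hence $|F'(t)|\lesssim \eps^2\mathcal{D}^{1/2}(t) + M_2(t)$. Plugging in Proposition \ref{eff est}, one verifies case by case in the $\beta$-regimes of \eqref{bd t} that $\int_0^T(\eps^2\mathcal{D}^{1/2} + M_2)\ds \lesssim \eps$, so $|F(t) - F(0)|\lesssim \eps$, and
\begin{equation*}
\Bigl|\sum_i a_i(|X_i^0|^2 - |X_i(t)|^2)\Bigr| = |F(0)^2-F(t)^2| \lesssim \eps\bigl(F(0)+F(t)\bigr) \lesssim \eps\Bigl(\sum_i |X_i(t)| + |X_i^0|\Bigr),
\end{equation*}
using that $F$ is equivalent to $\sum_i|X_i|$ when all $a_i > 0$.

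\textbf{Main obstacle.} Conceptually, the clean algebraic identity $\sum_i a_i X_i \cdot u_i^p(X_i) = 0$ is the heart of the argument. Technically, verifying the time-integral bound $\int_0^T (\eps^2\mathcal{D}^{1/2} + M_2)\ds \lesssim \eps$ case by case in the $\beta$-regimes of \eqref{bd t} is the most delicate step, and is essentially the reason why the stopping time $T$ in \eqref{def T} was calibrated the way it was.
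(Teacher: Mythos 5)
Your treatment of \eqref{almost energy} is essentially the paper's own argument: on $\R^2$ one has $\gamma\equiv 0$, the quantities $\sum_i\E(\omega_i^*)$ and $\int G\,\omega\,\omega$ are conserved, so the change of the point-vortex energy equals the change of $\tilde{\mathcal D}$, which is then controlled through Lemma \ref{lem d}, the bound \eqref{bd d} and the initial bound from \eqref{A6}. That half is fine.

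Your proof of \eqref{almost mom}, however, has a genuine gap. You reduce the claim to the time-integrated bound $\int_0^T\bigl(\eps^2\mathcal D^{1/2}+M_2\bigr)\,\mathrm{d}s\lesssim\eps$, which you assert can be ``verified case by case'' but never check; it does not follow from the estimates at your disposal, and in part of the parameter range it is false for them. Indeed, $M_2$ is not covered by \eqref{bd m} (which requires $k\geq 4$), so the available control is $M_2\lesssim S^2\mathcal D$ (the interpolation $M_2\lesssim\mathcal D^{1/2}M_4^{1/2}$ gives the same leading term); combined with \eqref{bd d} and \eqref{bd s} this contains the contribution $\eps^{2\beta}(1+t)$, whose integral up to the maximal time in \eqref{bd t} is of order $\eps^{\beta}|\log\eps|^{-1}$ for $\beta\in[\tfrac45,2)$ and $\eps^{4-4\beta}$ for $\beta\in(\tfrac23,\tfrac45)$. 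For $\beta\in(\tfrac34,1)$ both exceed $\eps$ by a power of $\eps^{-1}$, so your Gronwall-in-time argument only yields \eqref{almost mom} with $\eps^{\min(\beta,\,4-4\beta)}$ in place of $\eps$ there (enough for the application in Theorem \ref{T1'}, but not the lemma as stated). A further restriction is that your Cauchy--Schwarz step $\bigl|\sum_i a_iX_i\cdot r_i\bigr|\lesssim F\bigl(\eps^2\mathcal D^{1/2}+M_2\bigr)$ uses $a_i>0$, while the lemma makes no sign assumption. The paper avoids all of this with a static comparison rather than a dynamic one: on $\R^2$ the fluid angular momentum $\int_{\R^2}\omega(x)|x|^2\dx$ is exactly conserved, and at each fixed time one has $\bigl|\int_{\R^2}\omega|x|^2\dx-\sum_i a_i|X_i|^2\bigr|\lesssim(\eps+M_1)\sum_i|X_i|+\eps^2+M_2\lesssim\eps\bigl(1+\sum_i|X_i|\bigr)$, using only $M_1\lesssim\eps$ and $M_2\lesssim\eps$ from \eqref{control cmass}; applying this at times $0$ and $t$ and using the triangle inequality gives \eqref{almost mom} with the clean factor $\eps$, uniformly on $[0,T)$ and for arbitrary signs of the $a_i$, with no time integration of velocity errors at all.
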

\begin{proof}
For \eqref{almost mom}, we first note that the total angular momentum of the fluid, $\int_{\R^2} \omega(x)|x|^2\dx$, is a conserved quantity. On the other hand, as long as the estimates in Proposition \eqref{eff est} hold, we can also estimate \begin{align*}
\mel\left| \int_{\R^2} \omega(x)|x|^2\dx-\sum_{i=1}^n a_i |X_i|^2\right|\leq\int_{\R^2} \sum_{i=1}^n|\omega_i| \left||X_i|^2-|x|^2\right|\dx\\
&\lesssim \sum_{i=1}^n\int_{\R^2}|\omega_i| \left(|X_i||X-x_i|+|X_i-x|^2\right)\dx\\
&\lesssim (\eps+M_1)\sum_{i=1}^n  |X_i|+\eps^2+M_2\lesssim \eps\left(1+\sum_{i=1}^n  |X_i|\right).
\end{align*}
Here, the penultimate step follows from splitting $\omega_i=\omega_i^c+\omega_i^f$, using \eqref{est momentum} for the far part and noting that $|X_i-x|\lesssim \eps$ on the support of the close part; the last step follows from \eqref{control cmass}, which holds by Proposition \ref{eff est}.
Applying this estimate at both the times $0$ and $t$ and using the triangle inequality shows \eqref{almost mom}.\smallskip

The estimate \eqref{almost energy} uses a similar idea; the energy $\sum_{i\neq j}a_ia_j\log|X_i-X_j|$ is $\tilde{\mathcal{D}}$, up to terms which are conserved under the evolution and a constant factor as one can directly see from the definition \eqref{def tilde D}. Hence it holds that \begin{align*}
\left|\sum_{i\neq j} a_ia_j \left(\log|X_i(t)-X_j(t)|-\log|X_i^0-X_j^0|\right)\right|\lesssim |\tilde{\mathcal{D}}(0)-\tilde{\mathcal{D}}(t)|.
\end{align*}
Lemma \ref{lem d} and \eqref{bd d} yield the claim.

\end{proof}

We now pick $b_0<\frac{1}{10}$, depending on the $X_i^0$, so that on the one hand \eqref{A7} holds for the initial data and $b=2b_0$ and on the other hand so that \begin{align}
\left(\min_i a_i\right)^2|\log(2b_0)|-\left(\frac{n(n-1)}{2}-1\right)\left(\max_{i}a_i\right)^2\left|\log L\right|>1+2\left|\sum_{i\neq j}a_ia_j\log|X_i^0-X_j^0|\right|,\label{def b0}
\end{align}
where \begin{align*}
    L=2\sqrt{\left(\min_i a_i\right)^{-1}\left(1+2\sum_{i=1}^{n} a_i |X_i^0|^2\right)}.
\end{align*}
This is possible because the left-hand side term goes to $+\infty$ as $b_0\searrow 0$ and the right-hand side does not depend on $b_0$.

To prove the theorem, it then suffices to show that for sufficiently small $\eps$ (the smallness depending on $b_0$), the assumption \eqref{A7} (with $b=b_0$) can not be violated as long as the estimates \eqref{almost mom} and \eqref{almost energy} hold.

We first note that for sufficiently small $\eps$ it holds that \begin{align}
\sum_{i=1}^n a_i|X_i(t)|^2< 1+2\sum_{i=1}^n a_i|X_i^0|^2\label{mom est}
\end{align}
(up to the time $T=T(b_0)$ from Thm.\ \ref{T1}) if \eqref{A7} is not violated before $t$.

Indeed, this follows from \eqref{almost mom} and $|X_i(t)|\leq 1+|X_i(t)|^2$ by picking $\eps$ small enough so that the quadratic terms can be reabsorbed into the left-hand side.

In particular, \eqref{mom est} implies that \begin{align}
    |X_i(t)|<\frac{L}{2}.\label{est L}
\end{align}
Similarly, we see that we have the same estimate for the energy \begin{align}
\left|\sum_{i\neq j} a_ia_j\log|X_i(t)-X_j(t)|\right|< 1+2\left|\sum_{i\neq j} a_ia_j\log|X_i^0-X_j^0|\right|.\label{est energy}
\end{align}
We then compute from the definition of $b_0$ that whenever $|X_i(t)-X_j(t)|=2b_0$ we have \begin{align*}
\mel\left|\sum_{i\neq j} a_ia_j\log|X_i(t)-X_j(t)|\right|\\
&\geq\left(\min_i a_i\right)^2\left|\log(2b_0)\right|-\left(\frac{n(n-1)}{2}-1\right)\left(\max_{i}a_i\right)^2\left|\log\max_{i\neq j}|X_i-X_j|\right|.
\end{align*}
Using \eqref{est L} and the definition of $b_0$, we see that this contradicts \eqref{est energy}. Hence, the $X_i$ can never get closer than $2b_0$ to each other, and \eqref{A7} can never be violated, yielding the theorem. \hfill\qedsymbol

%First note that there exists some $b_0$ such that the assumption \eqref{A7} holds for the initial data with $b_0$ and such that furthermore

%\begin{align}
%2\left(\min_i a_i^2\right)|\log b_0|\leq \sum 
%\end{align}

\textbf{Acknowledgment}
The author has received funding from the European Research Council (ERC) under the European Union’s Horizon 2020 research and innovation programme through the grant agreement 862342.

\textbf{Statement about conflicting interests}
The author declares that he has no conflict of interests.

\textbf{Data availability statement}
There is no data associated with this article.

\bibliography{Vortices}
	\bibliographystyle{acm}

\end{document}